\documentclass[a4paper,11pt]{amsart}

\usepackage{amsmath}
\usepackage{amsthm}
\usepackage{amssymb}
\usepackage{amsbsy}
\usepackage{amsfonts}
\usepackage{amstext}
\usepackage{amscd}
\usepackage{tikz}
\usepackage{graphicx}
\usepackage{verbatim}
\usepackage{bm}
\usepackage{commath,mathtools,setspace}
\usepackage{paralist}
\usepackage{extarrows}
\usepackage{color}
\usepackage{stmaryrd}
\usepackage{mathrsfs}
\usepackage{stackengine}
\usepackage{comment}
\usepackage{stmaryrd}  

\usepackage[T1]{fontenc}
\usepackage{graphicx}
\usepackage{blindtext} 
\usepackage{geometry}
\usepackage[colorlinks=true,linkcolor=blue,citecolor=blue]{hyperref}
\usepackage[colorinlistoftodos,prependcaption,textsize=small]{todonotes}

\definecolor{red}{RGB}{255,0,0}
\definecolor{green}{RGB}{0,100,0}
\definecolor{blue}{RGB}{0,0,255}

\numberwithin{equation}{section}

\newtheorem{theorem}{Theorem}[section]
\newtheorem{lemma}[theorem]{Lemma}

\newtheorem{corollary}[theorem]{Corollary}
\newtheorem{proposition}[theorem]{Proposition}
\newtheorem{notation}[theorem]{Notation}

\newtheorem{remark}[theorem]{Remark}
\newtheorem{definition}[theorem]{Definition}
\newtheorem{example}[theorem]{Example}

\newcommand{\weak}{\xrightarrow{w}}
\newcommand{\vague}{\xrightarrow{v}}
\newcommand{\C}{\mathbb{C}}

\newcommand{\N}{\mathbb{N}}
\newcommand{\E}{\mathbb{E}}
\renewcommand{\P}{\mathbb{P}}
\newcommand{\R}{\mathbb{R}}

\newcommand{\dx}{{\rm d} x}
\newcommand{\dt}{{\rm d} t}

\newcommand{\polplus}{\mathcal P_n(\mathbb R_{\geq 0})}
\newcommand{\pols}{\mathcal P}

\newcommand{\kk}{\mathbb{K}}

\newcommand{\MM}{\mathcal{M}}

\def \X {{\mathbb X}}
\def \Y {{\mathbb Y}}
\def \A {{\mathbb A}}
\def \B {{\mathbb B}}

\def \M {{\mathbb M}}

\newcommand{\falling}[2]{\left(#1\right)_{#2}}
\newmuskip\pFqmuskip

\newcommand*\pFqN[6][8]{%
  \begingroup 
  \pFqmuskip=#1mu\relax
  \mathcode`\,=\string"8000
  \begingroup\lccode`\~=`\,
  \lowercase{\endgroup\let~}\pFqcomma
  {}_{#2}F_{#3}{\left(\genfrac..{0pt}{}{#4}{#5};#6\right)}%
  \endgroup
}
\newcommand{\pFqcomma}{\mskip\pFqmuskip}

\newcommand{\mset}[1]{%
  \{\!\!\{ #1 \}\!\!\}%
}

\newcommand*\HGP[3]{%
 \mathcal{H}_{#1}{\left[\genfrac..{0pt}{1}{#2}{#3}\right]}%
}

\newcommand{\meas}[1]{\mu\! \left\llbracket #1 \right\rrbracket}

\newcommand{\coef}[2]{\widetilde{\mathsf{e}}_{#1}^{(#2)}} 

\newcommand{\strans}[2]{S_{#1}^{(#2)}}   

\usepackage{stackengine,scalerel}

\makeatletter
\@namedef{subjclassname@2020}{%
	\textup{2020} Mathematics Subject Classification}
\makeatother

\subjclass[2020]{Primary 46L54; Secondary 60H25, 30C15.}
\keywords{finite free convolutions, finite free perpetuities, free perpetuities, empirical root distributions, Jacobi polynomials}

\begin{document}

\title{Finite free perpetuities}
\author{Julia Le Bihan}
\email{julia.le\_bihan.stud@pw.edu.pl}

\author[B. Ko\l{}odziejek]{Bartosz Ko\l{}odziejek}
\email{bartosz.kolodziejek@pw.edu.pl}
\address{Faculty of Mathematics and Information Sciences, Warsaw University of Technology, Koszykowa 75, \mbox{00-662} Warsaw, Poland}

\thanks{This research was funded in part by National Science Centre, Poland, 2023/51/B/ST1/01535.
\\ For the purpose of Open Access, the authors have applied a CC-BY public copyright licence to any Author Accepted Manuscript (AAM) version arising from this submission.}

\begin{abstract}
We introduce and study finite free perpetuities, defined as monic polynomial solutions of degree \(n\) to the affine fixed-point equation 
\[
p(z)
=
\E\!\left[
A^{n}\,p\!\left(\frac{z-B}{A}\right)\mathbf{1}_{\{A\neq0\}}
\right]
+
\E\!\left[
(z-B)^n\mathbf{1}_{\{A=0\}}
\right],
\]
where \(A\) and \(B\) are complex-valued random variables with finite moments up to order \(n\).
Equivalently, if \(p(z)=\E[(z-X)^n]\), then \(p\) encodes a truncated moment version of the classical perpetuity equation \(X\stackrel{d}{=}AX+B\) with $X$ and $(A,B)$ independent.
This places finite free perpetuities between classical perpetuities and free-probabilistic fixed-point laws.

We prove existence and uniqueness under weak conditions, and we identify a broad class of admissible pairs $(A,B)$ for which the resulting polynomial has only real, nonnegative zeros.
Our approach uses finite free additive and multiplicative convolutions together with a probabilistic representation via the \(U\)-transform.

As a motivating example, we exhibit an explicit family of finite free perpetuities expressed in terms of Jacobi polynomials and show that their empirical root distributions converge to a free-beta-prime law.
More generally, for admissible sequences of parameters, we prove weak convergence of the empirical root distributions of finite free perpetuities to the law of a free perpetuity characterized by the corresponding free fixed-point equation.
This yields a finite-degree polynomial model approximating free perpetuities and clarifies the connection between classical affine recursions, finite free convolutions, and free probability.
\end{abstract}

\maketitle

\section{Introduction}
The main object of this paper is a polynomial fixed-point equation of the form
\begin{equation}\label{eq:affine_fixed_point_intro}
p(z)
=
\E\!\left[
A^{n}\,p\!\left(\frac{z-B}{A}\right)\mathbf{1}_{\{A\neq0\}}
\right]
+
\E\!\left[
(z-B)^n\mathbf{1}_{\{A=0\}}
\right],
\end{equation}
where $p$ is an unknown polynomial and \(A\), \(B\) are complex-valued random variables with finite moments up to order \(n\).
 More generally, this can be viewed as a special case of the general scheme
\[
p(z)=\E\big[\psi_z(p)\big],
\]
where, for each \(z\in\C\), \(\psi_z\) is a random operator on the space of monic polynomials of degree \(n\).

A solution \(p\) of \eqref{eq:affine_fixed_point_intro} will be called a finite free perpetuity of order \(n\).

The terminology is motivated by the following moment interpretation.
Every monic polynomial \(p\) of degree \(n\) can be represented in the form
\[
p(z)=\E[(z-X)^n]
\]
for some complex-valued random variable \(X\).
Comparing coefficients shows that \eqref{eq:affine_fixed_point_intro} is equivalent to
\[
\E[X^k]=\E[(AX+B)^k],\qquad k=1,\ldots,n,
\]
where the joint law of \((X,A,B)\) satisfies the factorization condition 
\[
\E[X^iA^iB^j]=\E[X^i]\E[A^iB^j],
\qquad i,j\ge 0,\quad i+j\le n,
\]
which we call affine $n$-independence of $X$ and $(A,B)$. 

This is closely related to the classical perpetuity equation 
\[
X\stackrel{d}{=}AX+B,
\qquad X\text{ and }(A,B) \mbox{ are independent},
\]
which has been extensively studied; see, for example, the monograph \cite{BurBook16}.
In particular, every classical perpetuity with finite moments up to order \(n\) canonically defines a finite free perpetuity of order \(n\). Thus finite free perpetuities should be viewed as truncated analogues of classical perpetuities: one keeps only the first \(n\) moments of the law of \(X\), and replaces full independence by factorization of mixed moments up to order \(n\).

A second source of motivation comes from finite free convolutions, introduced by Marcus, Spielman and Srivastava in \cite{MSS} and subsequently developed, among others, in \cite{AP18,AGP23,AFPU24,Fujie25}.
When \(A\) and \(B\) are independent (this condition can again be  weakened to factorization of joint moments up to order $n$), \eqref{eq:affine_fixed_point_intro} can be rewritten as
\begin{align}\label{eq:simpleintro}
p=(p_A\boxtimes_n p)\boxplus_n p_B,
\end{align}
where
\[
p_A(z)=\E[(z-A)^n],
\qquad
p_B(z)=\E[(z-B)^n],
\]
and \(\boxtimes_n\), \(\boxplus_n\) denote the finite free multiplicative and additive convolutions.
From this perspective, finite free perpetuities form an intermediate model between classical affine recursions and their free-probabilistic counterparts.

Finite free convolutions are often realized through matrix models, or equivalently through expected characteristic polynomials of random matrix operations \cite{MSS,Mar21}. Suppose, for instance, that $p_A$ and $p_B$ have real roots, with the roots of $p_A$ nonnegative, and let $\mathbf A$ and $\mathbf B$ be symmetric $n\times n$ matrices such that
\[
p_A(z)=\det(zI_n-\mathbf A),
\qquad
p_B(z)=\det(zI_n-\mathbf B).
\]
If $\mathbf X$ is a symmetric matrix with characteristic polynomial
\[
p(z)=\det(zI_n-\mathbf X),
\]
then the right-hand side of \eqref{eq:simpleintro} can be represented as the expected characteristic polynomial of 
\[
\mathbf A^{1/2}V_1\mathbf XV_1^\top \mathbf A^{1/2}
+
V_2\mathbf BV_2^\top,
\]
where $V_1$ and $V_2$ are independent Haar-distributed orthogonal matrices. Thus, in this matrix-model picture, \eqref{eq:simpleintro} corresponds to the characteristic-polynomial fixed-point relation
\[
\det(zI_n-\mathbf X)
=
\E\left[\det\left(
zI_n-
\mathbf A^{1/2}V_1\mathbf XV_1^\top \mathbf A^{1/2}-
V_2\mathbf BV_2^\top
\right)\right],
\]
where the expectation is taken with respect to $V_1$ and $V_2$. 
In the present paper, however, we work directly at the polynomial level, using the $U$-transform approach introduced in \cite{Mar21}.

Our main results go in two directions.
First, for each fixed degree $n$, we establish general existence and uniqueness criteria for \eqref{eq:affine_fixed_point_intro}. We also  identify a broad class of pairs $(A,B)$ for which the corresponding polynomial $p$ has only real, nonnegative roots.

Second, we study the large-degree limit. For admissible sequences of parameters $(A_n,B_n)_n$, we prove that the empirical root distributions of the associated finite free perpetuities converge weakly to a probability measure $\mu_{\X}$. This limiting measure is characterized as the distribution of the unique solution of a free perpetuity fixed-point equation of the form
\[
\X \stackrel{d}{=} \A\, \X \,\A^\ast + \B,
\qquad \X\mbox{ and }
(\A,\B)\ \text{are $*$-free},
\]
as studied in \cite{FreePerp}. We also note that matrix analogues of perpetuities were recently studied in \cite{MatrixPerp}, where the large-dimensional limits of empirical spectral distributions were also related to laws of free perpetuities.

\subsection*{Structure of the paper}

Section~\ref{sec:preli} collects the notation on polynomials, empirical root distributions, and finite free convolutions, and recalls the approximation of free additive and multiplicative convolutions by their finite counterparts.

Section~\ref{sec:ex} presents a motivating family of examples based on Jacobi polynomials.
There we solve an explicit polynomial fixed-point equation and identify the weak limit of the empirical root distribution with a free perpetuity studied in \cite{FreePerp}.

Next, in Section \ref{sec:U} we introduce normalized derivatives and the \(U\)-transform, which provide a convenient probabilistic representation of finite free convolutions and allow us to reformulate polynomial fixed-point equations in moment form.

In Section \ref{sec:ffp} we define finite free perpetuities in general, prove existence and uniqueness under natural assumptions, and establish criteria ensuring that all roots are real and nonnegative.

Finally, we study the large-degree limit in Section \ref{sec:approx}.
For admissible sequences of parameters, we prove weak convergence of the empirical root distributions of finite free perpetuities to the law of a free perpetuity, thereby making precise the connection between the finite and free theories.

For completeness, Appendix~\ref{app} recalls the standard construction of complex-valued classical perpetuities and explains how it reduces to the usual real vector-valued setting.

\section{Preliminaries}\label{sec:preli}

\subsection{Polynomials - notation and properties}

Unless otherwise stated, all polynomials are assumed to belong to the polynomial ring $\C[z]$, and their roots are taken in $\C$.

\begin{notation}
Let $\kk$ be a Borel subset of $\C$. 

\begin{itemize}
\item We denote by $\MM(\kk)$ the set of Borel probability measures supported in $\kk$. 

\item For a measure $\mu\in \MM(\kk)$ we denote by $m_n(\mu)$ the value of the integral $\int_{\kk}x^n\mu(\dx)$ (if it exists).

\item We denote by $\pols_n(\kk)$ the set of all monic polynomials of degree $n\in \N$ with all roots in $\kk$. 

\item We denote by $\tilde{\pols}_{\le n}(\kk)$ the set consisting of all polynomials of degree less or equal to $n$ with all roots in $\kk$ and the zero polynomial.
\end{itemize}

\end{notation}

\begin{definition}
For a polynomial $p$ of degree $n$ with roots $\lambda_1(p), \dots, \lambda_n(p)$ (counting multiplicities), its empirical root distribution is the measure
\[ \meas{p} = \frac{1}{n}\sum_{i=1}^n \delta_{\lambda_i(p)}. \]

\end{definition} 

We collect some basic results on the convergence of polynomials. The proofs are standard applications of Rouché's theorem.
\begin{lemma}\label{lem:conv_rel_pols}
    Let $(p_N)_N\subset \tilde{\pols}_{\le n}(\R)$, where $n$ is fixed. If $p_N \to q$ point-wise then $q\in\tilde{\pols}_{\le n}(\R)$. 

Similarly, if $(p_N)_N\subset\tilde{\pols}_{\le n}(\R_{\ge0})$ and $p_N \to q$ point-wise, then $q\in\tilde{\pols}_{\le n}(\R_{\ge0})$. 
\end{lemma}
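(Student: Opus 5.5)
Pointwise convergence in the finite-dimensional space of polynomials of degree at most $n$ is the same as coefficientwise convergence. To see this, pick $n+1$ distinct nodes; the coefficients depend linearly on the values at these nodes via the inverse Vandermonde matrix. Consequently $q$ has degree at most $n$, and $p_N\to q$ uniformly on compact subsets of $\C$.

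If $q\equiv 0$, then $q\in\tilde{\pols}_{\le n}(\R)$ by definition and there is nothing to prove. Otherwise $q$ has finitely many roots. Suppose, for contradiction, that $q$ has a root $w$ with $\operatorname{Im} w\neq 0$. Choose a radius $r<|\operatorname{Im} w|$ small enough that the closed disc $\overline{D(w,r)}$ contains no other root of $q$. Then $\overline{D(w,r)}$ does not meet $\R$, and $\delta:=\min_{|z-w|=r}|q(z)|>0$. By uniform convergence on the circle, for large $N$ we have $|p_N-q|<\delta\le|q|$ on $|z-w|=r$. Rouché's theorem then shows that $p_N$ has at least one zero in $D(w,r)$, hence a non-real zero.

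It remains to rule out that $p_N$ is the zero polynomial, since then "all roots real" would hold vacuously. If $p_N\equiv0$ for infinitely many $N$, then along that subsequence the limit would give $q\equiv 0$, which we have excluded. So for large $N$, $p_N$ is a nonzero polynomial with a non-real root, contradicting $p_N\in\tilde{\pols}_{\le n}(\R)$. Hence every root of $q$ is real, and $q\in\tilde{\pols}_{\le n}(\R)$.

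The second statement is proved by the same argument. If $q$ had a root $w\notin\R_{\ge0}$, choose a disc around $w$ whose closure avoids the closed set $\R_{\ge0}$ and the other roots of $q$. Rouché's theorem again forces a root of $p_N$ outside $\R_{\ge0}$ for large $N$, which is a contradiction.

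The only delicate point is the bookkeeping for degenerate cases. The degree may drop in the limit, so that leading coefficients tend to $0$ and roots escape to infinity. This is harmless, because the argument only localizes near roots of $q$ and never requires roots to be tracked globally. The zero-polynomial case is handled separately as above.
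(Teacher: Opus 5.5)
Your proof is correct and matches the paper's approach. The paper gives no details beyond calling the lemma a standard application of Rouché's theorem, and your argument (coefficientwise convergence, then a Hurwitz-type localization near a non-real or negative root $w$ of $q$) supplies exactly that argument. Your separate handling of the case $p_N\equiv 0$ is harmless but redundant, since $|p_N|\ge |q|-|p_N-q|>0$ on the circle $|z-w|=r$ already forces $p_N\not\equiv 0$.
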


\begin{definition}[Polynomials and their coefficients]\label{e_k}
Every polynomial $p\in \tilde\pols_{\leq n}(\C)$ can be written uniquely in the form
\begin{align*}
    p(z) &= \sum_{k=0}^n (-1)^k \binom{n}{k} \coef{k}{n}(p) z^{n-k}.
\end{align*}
In particular, if $p(z) = \prod_{i=1}^n (z-\lambda_i) \in \pols_n(\C)$, then
\[ \coef{k}{n}(p) = \binom{n}{k}^{-1} \sum_{1 \le i_1 < \dots < i_k \le n} \lambda_{i_1} \cdots \lambda_{i_k} \]
for $k\in\{1,\dots,n\}$, while $\coef{0}{n}(p) = 1$.

We say that a polynomial $p\in\tilde\pols_{\leq n}(\C)$ is $d$-monic, $d\in\{0,\ldots,n\}$, if $\coef{d}{n}(p)=1$. In particular, monic polynomials are $0$-monic. More precisely, this notion depends on both \(n\) and \(d\). We suppress the parameter \(n\) when it is clear from the context.
\end{definition}

\subsection{Finite free convolutions}

Finite free convolutions are binary operations on polynomials with remarkable root-preserving properties. The symmetric additive and multiplicative versions were studied classically by Walsh \cite{walsh1922location} and Szeg\H{o} \cite{szego1922bemerkungen}. More recently, Marcus, Spielman, and Srivastava \cite{MSS} showed that these convolutions admit a representation in terms of expected characteristic polynomials of random matrices and developed their connection with free probability.

\begin{definition}[Finite Free Convolutions]\label{def:finite_free_conv}
Let $p, q \in \tilde\pols_{\leq n}(\C)$.
\begin{itemize}
\item Their finite free additive convolution, denoted $p \boxplus_n q$, is the polynomial in $\tilde\pols_{\leq n}(\C)$ defined by its coefficients:
\[ \coef{k}{n}(p \boxplus_n q) = \sum_{j=0}^k \binom{k}{j} \coef{j}{n}(p) \coef{k-j}{n}(q) \quad \text{for } k=0, 1, \dots, n. \]
    \item Their finite free multiplicative convolution, denoted $p \boxtimes_n q$, is the polynomial in $\tilde\pols_{\leq n}(\C)$ defined by its coefficients:
\[ \coef{k}{n}(p \boxtimes_n q) = \coef{k}{n}(p) \coef{k}{n}(q) \quad \text{for } k=0, 1, \dots, n. \]
\end{itemize}
\end{definition}
\begin{corollary} 
The following properties follow directly from the definition.
    \begin{itemize}
        \item[(i)] the finite free additive and multiplicative convolutions are both bilinear, commutative and associative,
         \item[(ii)] the neutral element of $\boxplus_n$ is the polynomial $e_{\boxplus_n}(x)=x^n$,
        \item[(iii)] the neutral element of $\boxtimes_n$ is the polynomial $e_{\boxtimes_n}(x)=(x-1)^n$. 
    \end{itemize}
\end{corollary}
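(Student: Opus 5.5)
The plan is to work entirely at the level of the normalized coefficients $\coef{k}{n}$. By Definition~\ref{e_k}, the map $p\mapsto(\coef{0}{n}(p),\dots,\coef{n}{n}(p))$ is a linear bijection between polynomials of degree at most $n$ and $\C^{n+1}$. So every claimed identity between polynomials reduces to an identity between these coefficient vectors, checked for each $k=0,\dots,n$. Both convolutions in Definition~\ref{def:finite_free_conv} are given by explicit formulas in these coordinates, so each item becomes an algebraic identity in $\C$.

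\emph{Item (i).} Bilinearity holds because $\coef{k}{n}(\cdot)$ is linear in $p$. Hence both $\sum_j\binom{k}{j}\coef{j}{n}(p)\coef{k-j}{n}(q)$ and $\coef{k}{n}(p)\coef{k}{n}(q)$ are bilinear in $(p,q)$.

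For commutativity of $\boxtimes_n$, use that multiplication in $\C$ is commutative. For $\boxplus_n$, reindex $j\mapsto k-j$ and use $\binom{k}{j}=\binom{k}{k-j}$.

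Associativity of $\boxtimes_n$ is immediate: both sides have $k$-th coefficient $\coef{k}{n}(p)\coef{k}{n}(q)\coef{k}{n}(r)$. For $\boxplus_n$, expand both sides to get the trinomial sum
\[
\sum_{a+b+c=k}\frac{k!}{a!\,b!\,c!}\,\coef{a}{n}(p)\coef{b}{n}(q)\coef{c}{n}(r),
\]
using $\binom{k}{j}\binom{j}{a}=\frac{k!}{a!(j-a)!(k-j)!}$. A cleaner route is to note that $\coef{k}{n}(p)/k!$ behave like coefficients of an exponential generating function. Indeed, $\boxplus_n$ corresponds to multiplying the truncated series $\sum_{k\le n}\coef{k}{n}(p)t^k/k!$ modulo $t^{n+1}$, which makes commutativity and associativity inherited from the power series ring. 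I would present that argument.

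\emph{Items (ii) and (iii).} Compute the coefficients of the proposed neutral elements.
\begin{itemize}
\item For $x^n$: $\coef{0}{n}=1$ and $\coef{k}{n}=0$ for $k\ge1$. The sum for $\coef{k}{n}(p\boxplus_n x^n)$ then collapses to the $j=k$ term, $\coef{k}{n}(p)$.
\item For $(x-1)^n=\sum_k(-1)^k\binom{n}{k}x^{n-k}$: $\coef{k}{n}=1$ for all $k$, so $\coef{k}{n}(p\boxtimes_n(x-1)^n)=\coef{k}{n}(p)$.
\end{itemize}

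One also checks that the outputs lie in $\tilde\pols_{\le n}(\C)$, which holds trivially since any coefficient vector defines a polynomial of degree at most $n$. The only mildly nontrivial step is associativity of $\boxplus_n$, handled by the generating-function identification above. Everything else is bookkeeping.
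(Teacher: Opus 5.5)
Your proposal is correct and follows the same approach as the paper. The paper gives no argument beyond saying these properties ``follow directly from the definition'' in Definition~\ref{def:finite_free_conv}. Your coefficientwise verification is exactly that omitted bookkeeping, and the observation that $\sum_{k\le n}\coef{k}{n}(p)\,t^k/k!$ turns $\boxplus_n$ into multiplication in $\C[t]/(t^{n+1})$ is a clean way to get commutativity and associativity of $\boxplus_n$ at once.
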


The following preservation property is standard in finite free probability. While closely related statements appear in the literature, we could not find a reference for the precise formulation needed below, so we include a short proof for completeness.
\begin{proposition}[Root-preserving properties]\label{prop:pres}
The finite free convolutions preserve nonnegative real-rootedness in the following sense: if
\(p,q\in \pols_n(\R_{\ge 0})\), then
\[
p\boxplus_n q,\; p\boxtimes_n q \in \pols_n(\R_{\ge 0}).
\]
\end{proposition}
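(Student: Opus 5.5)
We reduce both statements to classical facts about real-rooted polynomials: the Walsh/Szegő-type theorems for the symmetric additive and multiplicative compositions, which the paper cites. Since these references are stated for real-rootedness, we derive the sign information ourselves. Monicity is immediate, because $\coef{0}{n}(p\boxplus_n q)=\coef{0}{n}(p\boxtimes_n q)=1$. So the task is to show that all roots are real and nonnegative.

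\emph{Additive case.} First we use the derivative representation
\[
p\boxplus_n q(z)=\frac{1}{n!}\sum_{i+j=n}\,p^{(i)}(z)\,\hat q^{(j)}(0)\cdot(\text{constants}),
\]
or, more concretely, the MSS formula $p\boxplus_n q=\frac{1}{n!}\sum_{k}\partial^k p\cdot(\ldots)$. Equivalently, writing $q(z)=\prod_j(z-\mu_j)$, we have
\[
p\boxplus_n q = \frac{1}{n!}\,\hat Q(\partial)\,p,
\]
where $\hat Q$ is a polynomial with real roots. One checks this from the coefficient formula in Definition~\ref{def:finite_free_conv}: the binomial convolution of the $\coef{k}{n}$ matches the action of $\sum_k (-1)^k\binom{n}{k}\coef{k}{n}(q)\frac{(n-k)!}{n!}\partial^k$ on $p$.

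Real-rootedness then follows from the classical fact that, for real-rooted $f$, the operator $f(\partial)$ preserves real-rootedness (Laguerre–Pólya). Concretely, $(1-\mu\partial)$ maps real-rooted polynomials to real-rooted ones, and we factor $\hat Q(\partial)$ into such linear factors.

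For nonnegativity of the roots, we argue directly from the coefficients. If $p,q\in\pols_n(\R_{\ge0})$, then all $\coef{k}{n}(p),\coef{k}{n}(q)\ge0$, so every $\coef{k}{n}(p\boxplus_n q)\ge0$. A real-rooted monic polynomial whose coefficients alternate in sign, i.e.\ $(-1)^k\binom nk\coef{k}{n}\ge0$, has no negative roots, since $(-1)^n r(-x)>0$ for $x>0$. This settles the additive case.

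\emph{Multiplicative case.} We use Szegő's theorem, or equivalently a Hadamard-product argument. If $q\in\pols_n(\R_{\ge0})$, then $p\mapsto p\boxtimes_n q$ preserves real-rootedness. The standard route writes $p\boxtimes_n q$ as a symmetric multiaffine (Grace–Walsh–Szegő) composition. Alternatively, it reduces by bilinearity and continuity to the factors $q=(z-\mu)^{n-m}z^{m}$ type: $\boxtimes_n$ with $(z-\mu)^n$ is the dilation $\mu^n p(z/\mu)$, and general $q$ is handled via Grace's apolarity theorem. Concretely, we fix $z\notin\R$ and suppose $p\boxtimes_n q(z)=0$. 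The polynomial $\sum_k \binom nk \coef{k}{n}(p)\coef{k}{n}(q)(-1)^k z^{n-k}$ is the apolar pairing of $q$ against $w\mapsto$ a polynomial with roots $z/\lambda_i(p)$. Grace's theorem then forces a root of $q$ in the circular region containing all $z/\lambda_i$. For $z$ in the upper half-plane and $\lambda_i\ge0$, that region is the open upper half-plane when all $\lambda_i>0$, contradicting $\mu_j\in\R$. The degenerate case of zero roots is handled by perturbing $\lambda_i\to\lambda_i+\varepsilon$ and invoking Lemma~\ref{lem:conv_rel_pols}.

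Nonnegativity then follows exactly as before, since $\coef{k}{n}(p)\coef{k}{n}(q)\ge0$.

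The main obstacle is the multiplicative real-rootedness step, namely setting up the Grace apolarity correctly with the right normalization $\binom nk$ and the half-plane argument. Since this is classical (Szegő \cite{szego1922bemerkungen}, MSS \cite{MSS}), we cite it rather than reprove it in full. The sign argument is the genuinely new, and easy, part that upgrades real-rootedness to $\R_{\ge0}$.
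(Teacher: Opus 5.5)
Your skeleton is the paper's. Real-rootedness of $p\boxplus_n q$ and $p\boxtimes_n q$ is taken from the classical Walsh/Szeg\H{o} theorems (the paper cites \cite{AGP23}). Nonnegativity then comes from $\coef{k}{n}(\cdot)\ge 0$ via $(-1)^n r(-x)=\sum_k\binom{n}{k}\coef{k}{n}(r)x^{n-k}\ge x^n>0$, which is exactly the paper's Descartes-rule step. Your Grace-apolarity sketch for $\boxtimes_n$ is the standard correct proof of Szeg\H{o}'s theorem: apolarity of $q$ with $\prod_i(z-\lambda_i w)$, whose roots $z/\lambda_i$ lie in the open upper half-plane, followed by perturbation and Lemma~\ref{lem:conv_rel_pols}. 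Only the aside about reducing ``by bilinearity and continuity'' should be dropped, since real-rootedness is not preserved under linear combinations.

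The step that fails is your self-contained justification of additive real-rootedness. The operator you derive is $\hat Q(\partial)=\sum_{k=0}^n(-1)^k\coef{k}{n}(q)\,\partial^k/k!$, i.e.\ the one with $\hat Q(\partial)z^n=q$ and $\hat Q(\partial)p=p\boxplus_n q$. It is in general not given by a real-rooted $\hat Q$. Already for $q(z)=(z-1)^n\in\pols_n(\R_{\ge0})$ all $\coef{k}{n}(q)=1$, so $\hat Q(t)=\sum_{k=0}^n(-t)^k/k!$ is a truncated exponential. For $n=2$, $1-t+t^2/2$ has discriminant $-1$, even though $\hat Q(\partial)p=p(\cdot-1)$. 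So there is no factorization into real factors $(1-\mu\partial)$. The product $\prod_j(1-\mu_j\partial)$ built from the roots of $q$ is a different operator: its coefficient of $\partial^k$ is $n!/(n-k)!$ times that of $\hat Q$.

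Preservation of real-rootedness here is a finite-degree phenomenon, valid only on polynomials of degree at most $n$. It is Walsh's theorem \cite{walsh1922location}. It also follows from the Borcea--Br\"and\'en criterion, because $\hat Q(\partial_z)(z+w)^n=q(z+w)$ is real stable. Replace your Laguerre--P\'olya paragraph by that citation, as the paper does, and the proof is complete.
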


\begin{proof}
The corresponding property for the finite free multiplicative convolution is stated explicitly, for example, in \cite[Section 2.5]{AGP23}. It is also noted there that if \(p,q\in\pols_n(\R)\), then
\[
p\boxplus_n q\in\pols_n(\R).
\]
Thus, if \(p,q\in\pols_n(\R_{\ge 0})\), then in particular $p\boxplus_n q\in\pols_n(\R)$.
It remains to show that all roots of \(p\boxplus_n q\) are nonnegative.

Since \(p,q\in\pols_n(\R_{\ge 0})\), Vieta's formulas imply that
\[
\coef{k}{n}(p)\ge 0,
\qquad
\coef{k}{n}(q)\ge 0,
\qquad k=0,\dots,n.
\]
By the coefficient formula for the finite free additive convolution,
\[
\coef{k}{n}(p\boxplus_n q)
\ge 0,
\qquad k=0,\dots,n.
\]
We have
\[
(p\boxplus_n q)(-z)
= (-1)^n
\sum_{k=0}^n \binom{n}{k}\coef{k}{n}(p\boxplus_n q)\, z^{n-k}.
\]
By Descartes' rule of signs, the degree-$n$ polynomial \((p\boxplus_n q)(-z)\) has no positive roots, and hence \(p\boxplus_n q\) has no negative roots. 
\end{proof}

\subsection{Normalized derivatives and reduction to equal degrees}
Although Definition \ref{def:finite_free_conv} allows for polynomials of different degrees, it is more convenient to work with those of the same degree. In what follows, we describe the reduction to the same-degree case.
\begin{definition}[Normalized derivatives]
Let $p \in \tilde{\pols}_{\le n}(\C)$. We define two normalized derivative operators on $p$:

\begin{itemize}
    \item[(i)] The normalized standard derivative, denoted by $\partial_n$, is given by
    \[
        \partial_n p(z) := \frac{p'(z)}{n}.
    \]

    \item[(ii)] The normalized polar derivative (with respect to $0$), denoted by $D_n$, is given by
    \[
        D_n p(z) := \frac{z p'(z)}{n} - p(z).
    \]
\end{itemize}

More generally, for each integer $d$ with $0 \le d \le n$, the $d$th normalized standard derivative $\partial_n^d$ and the $d$th normalized polar derivative $D_n^d$ are defined recursively by
\[
    \partial_n^d := \partial_{n-d+1} \circ \partial_n^{d-1},
    \qquad
    D_n^d := D_{n-d+1} \circ D_n^{d-1},\qquad d\geq 1,
\]
    where $\partial_n^0$ and $D_n^0$ are the identity operators. 
\end{definition}

\begin{proposition}\label{prop:conv_diff_deg}
    Let $p\in\pols_n(\C)$, $q\in\pols_{n-d}(\C)$. Then
    \[
    p\boxplus_n q =\partial_n^dp\boxplus_{n-d}q \qquad\mbox{and}\qquad p\boxtimes_nq=D_n^dp \boxtimes_{n-d}q.
    \]
\end{proposition}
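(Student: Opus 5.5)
The proof will be a direct coefficient computation. The plan is to express both sides through the normalized coefficients $\coef{k}{\cdot}$ and compare them using Definition~\ref{def:finite_free_conv}. Here $q\in\pols_{n-d}(\C)$ is viewed as an element of $\tilde\pols_{\le n}(\C)$ of degree $n-d$.

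\emph{Step 1: how $\partial_n$ and $D_n$ act on coefficients.} Write $p(z)=\sum_{k=0}^n(-1)^k\binom nk \coef{k}{n}(p)z^{n-k}$. Then
\[
\partial_n p(z)=\sum_{k=0}^{n-1}(-1)^k\binom nk\frac{n-k}{n}\coef{k}{n}(p)z^{n-1-k}.
\]
Since $\binom nk\frac{n-k}{n}=\binom{n-1}{k}$, this gives $\coef{k}{n-1}(\partial_n p)=\coef{k}{n}(p)$ for $0\le k\le n-1$. Similarly,
\[
D_n p(z)=\sum_{k}(-1)^k\binom nk\Bigl(\frac{n-k}{n}-1\Bigr)\coef{k}{n}(p)z^{n-k}=\sum_{k\ge1}(-1)^{k-1}\binom nk\frac kn\coef{k}{n}(p)z^{n-k}.
\]
Using $\binom nk\frac kn=\binom{n-1}{k-1}$ and reindexing $j=k-1$, we get $\coef{j}{n-1}(D_n p)=\coef{j+1}{n}(p)$. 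Iterating $d$ times along the recursive definitions yields
\[
\coef{k}{n-d}(\partial_n^d p)=\coef{k}{n}(p),\qquad \coef{k}{n-d}(D_n^d p)=\coef{k+d}{n}(p).
\]

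\emph{Step 2: how the normalized coefficients of $q$ change with $n$.} Writing $q(z)=\sum_{k=0}^{n-d}(-1)^k\binom{n-d}{k}\coef{k}{n-d}(q)z^{n-d-k}$ and matching powers of $z$ in the $n$-normalization $q=\sum_m(-1)^m\binom nm\coef{m}{n}(q)z^{n-m}$, we get $\coef{m}{n}(q)=0$ for $m<d$. For $m=k+d$, we get $\coef{k+d}{n}(q)=(-1)^d\binom{n-d}{k}\binom{n}{k+d}^{-1}\coef{k}{n-d}(q)$.

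\emph{Step 3: compare the two sides.}

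For the multiplicative identity, note first that $\coef{m}{n}(p\boxtimes_n q)=0$ for $m<d$. For $m=k+d$,
\[
\coef{k+d}{n}(p\boxtimes_n q)=\coef{k+d}{n}(p)\coef{k+d}{n}(q)=\coef{k}{n-d}(D_n^dp)\coef{k+d}{n}(q).
\]
Translating back via Step 2, this equals the $(n-d)$-normalized coefficient $\coef{k}{n-d}(D_n^dp)\coef{k}{n-d}(q)=\coef{k}{n-d}(D_n^dp\boxtimes_{n-d}q)$, rescaled by the same factor. So the two polynomials agree.

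For the additive identity, the $z^{n-m}$ coefficient of $p\boxplus_n q$ is
\[
(-1)^m\binom nm\sum_j\binom mj\coef{j}{n}(p)\coef{m-j}{n}(q).
\]
Only terms with $m-j\ge d$ survive. Set $m-j=i+d$ and substitute Step 2, then compare with the coefficient of $z^{n-m}$ in $\partial_n^dp\boxplus_{n-d}q$, which is
\[
(-1)^{m-d}\binom{n-d}{m-d}\sum_j\binom{m-d}{j}\coef{j}{n}(p)\coef{m-d-j}{n-d}(q).
\]
Matching term by term reduces to the binomial identity
\[
\binom nm\binom mj\binom{n-d}{i}\binom{n}{i+d}^{-1}=\binom{n-d}{m-d}\binom{m-d}{j},
\qquad m=i+j+d.
\]
Both sides equal $\frac{(n-d)!}{j!\,i!\,(n-m)!}$, which one checks by expanding the factorials. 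Note that in this additive case there is an apparent degree mismatch: $p\boxplus_n q$ has degree $n-d$ when $q$ does, because the top terms vanish. Step 2 handles this automatically.

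The main obstacle I expect is bookkeeping: normalization conventions, signs, and the binomial identity in the additive case. I will verify that identity by expanding the factorials as indicated above.
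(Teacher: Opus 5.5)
Your proof is correct, and it takes a different route from the paper. The paper computes nothing here. It gets the additive identity by applying \cite[Lemma 1.16]{MSS} $d$ times, and the multiplicative one by applying the degree-reduction formula \cite[Lemma 4.9]{MSS} $d$ times, after correcting a sign typo there: the operator should be $xD-d$, which in the present normalization is $D_n$.

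You instead verify both identities directly from Definition~\ref{def:finite_free_conv}:
\begin{itemize}
\item Your Step~1 is exactly Lemma~\ref{lemma:normalized_derivatives_coefficients}. The paper proves that lemma independently of this proposition, just after it, so you could simply cite it.
\item Your Step~2 records how the normalized coefficients of a polynomial of degree at most $n-d$ look in the degree-$n$ normalization.
\item The comparison then reduces to a product identity in the multiplicative case. In the additive case it reduces to $\binom nm\binom mj\binom{n-d}{i}\binom n{i+d}^{-1}=\binom{n-d}{m-d}\binom{m-d}{j}=\frac{(n-d)!}{i!\,j!\,(n-m)!}$ with $m=i+j+d$, which is correct.
\end{itemize}

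The paper's route is shorter, but it depends on the normalizations in \cite{MSS} and on catching the typo there. Yours is self-contained and never uses monicity of $p$ or $q$. It also shows explicitly why the top $d$ coefficients of $p\boxplus_n q$ and $p\boxtimes_n q$ vanish.

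One point to make explicit: in the multiplicative case you apply the Step~2 relation to $D_n^dp\boxtimes_{n-d}q$, not to $q$. That is legitimate, because the relation is pure coefficient matching and holds for every polynomial of degree at most $n-d$. Say so, rather than stating Step~2 only for $q\in\pols_{n-d}(\C)$.
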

\begin{proof}
    The first identity follows by applying Lemma 1.16 from \cite{MSS} successively $d$ times.

    For the second identity, we apply $d$ times the degree-reduction formula for the multiplicative convolution from \cite[Lemma 4.9]{MSS}. We note that, in the notation of \cite{MSS}, there is a sign typo in Lemma 4.9: the correct degree-reduction operator is $xD-d$. Passing to our normalization, this operator is exactly
    \[
    D_n p(z)=\frac{zp'(z)}{n}-p(z).
    \]
    Hence each application lowers the degree by one, and after $d$ steps we obtain
    \[
    p\boxtimes_n q = D_n^d p \boxtimes_{n-d} q.
    \]
    This completes the proof.
\end{proof}

\begin{lemma}\label{lemma:normalized_derivatives_coefficients}
    Let $p \in \tilde{\pols}_{\le n}(\C)$ be written in the form 
    \[
    p(z) = \sum_{k=0}^n (-1)^k \binom{n}{k} \coef{k}{n}(p) z^{n-k}.
    \]
    Then, for any $0 \le d \le n$, the $d$-fold normalized derivatives satisfy:
            \[ \partial_n^d p(z) = \sum_{k=0}^{n-d} (-1)^k \binom{n-d}{k} \coef{k}{n}(p) z^{n-d-k}. \]
            and
        \[ D_n^d p(z) = \sum_{k=0}^{n-d} (-1)^k \binom{n-d}{k} \coef{k+d}{n}(p) z^{n-d-k}. \]
\end{lemma}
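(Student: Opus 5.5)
The plan is to prove both formulas by induction on $d$. The case $d=0$ is trivial, since $\partial_n^0$ and $D_n^0$ are identity operators. The heart of the argument is the single-step case $d=1$, applied to a general polynomial in $\tilde{\pols}_{\le m}(\C)$ written in the normalized basis of degree $m$. The inductive step then follows from the recursive definitions $\partial_n^d=\partial_{n-d+1}\circ\partial_n^{d-1}$ and $D_n^d=D_{n-d+1}\circ D_n^{d-1}$.

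\emph{Step 1 ($d=1$, standard derivative).} Write $p(z)=\sum_{k=0}^m(-1)^k\binom{m}{k}a_k z^{m-k}$. Differentiating termwise and dividing by $m$ gives
\[
\partial_m p(z)=\sum_{k=0}^{m-1}(-1)^k\frac{m-k}{m}\binom{m}{k}a_k z^{m-1-k}.
\]
The $k=m$ term drops out because it is constant. The elementary identity $\frac{m-k}{m}\binom{m}{k}=\binom{m-1}{k}$ then gives
\[
\partial_m p(z)=\sum_{k=0}^{m-1}(-1)^k\binom{m-1}{k}a_k z^{m-1-k}.
\]
So the coefficients in the degree-$(m-1)$ normalized basis are $\coef{k}{m-1}(\partial_m p)=a_k$.

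\emph{Step 1 ($d=1$, polar derivative).} We have $\frac{z p'(z)}{m}-p(z)=\sum_k(-1)^k\binom{m}{k}a_k\bigl(\frac{m-k}{m}-1\bigr)z^{m-k}$, and the bracket equals $-\frac{k}{m}$. The $k=0$ term vanishes. Reindexing with $j=k-1$ and using $\frac{k}{m}\binom{m}{k}=\binom{m-1}{k-1}$, the sign becomes $-(-1)^{j+1}=(-1)^j$. This yields
\[
D_m p(z)=\sum_{j=0}^{m-1}(-1)^j\binom{m-1}{j}a_{j+1}z^{m-1-j}.
\]
So the normalized coefficient sequence shifts by one: $\coef{j}{m-1}(D_m p)=a_{j+1}$.

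\emph{Step 2 (induction).} Assume the formula holds for $d-1$. Then $\partial_n^{d-1}p$ is written in the degree-$(n-d+1)$ basis with coefficients $\coef{k}{n}(p)$, for $k\le n-d+1$. Applying Step 1 with $m=n-d+1$ preserves these coefficients and truncates the top one, which gives the first formula. Likewise, $D_n^{d-1}p$ has coefficients $\coef{k+d-1}{n}(p)$, and Step 1 shifts the index by one more, giving $\coef{k+d}{n}(p)$.

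There is no real obstacle here. The only points needing care are the binomial identities and the sign bookkeeping in the polar case. One should also note that Step 1 never uses that $p$ has exact degree $m$, so the argument applies uniformly on $\tilde{\pols}_{\le m}(\C)$, including the zero polynomial.
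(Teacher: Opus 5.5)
Your proof is correct and takes essentially the same approach as the paper: induction on $d$, with the one-step computation for the polar derivative using $\frac{m-k}{m}-1=-\frac{k}{m}$, the identity $\frac{k}{m}\binom{m}{k}=\binom{m-1}{k-1}$, and a reindexing. The only differences are that you isolate the single-step case as its own computation and write out the standard-derivative case, which the paper calls analogous.
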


\begin{proof}
We prove the formula for \(D_n^d\); the proof for \(\partial_n^d\) is analogous.
The case \(d=0\) is immediate. Assume the claim holds for \(d-1\), and set
\(m:=n-d+1\). Then
\[
D_n^{d-1}p(z)
=
\sum_{k=0}^{m}
(-1)^k \binom{m}{k}\coef{k+d-1}{n}(p)\,z^{m-k}.
\]
Applying \(D_m\), we get
\begin{align*}
D_n^d p(z)
&=D_m(D_n^{d-1}p)(z)=
\sum_{k=0}^{m}
(-1)^k \binom{m}{k}
\left(\frac{m-k}{m}-1\right)
\coef{k+d-1}{n}(p)\,z^{m-k}\\
&=
-\sum_{k=1}^{m}
(-1)^k \frac{k}{m}\binom{m}{k}\coef{k+d-1}{n}(p)\,z^{m-k}.
\end{align*}
Using $\frac{k}{m}\binom{m}{k}=\binom{m-1}{k-1}$, and reindexing with \(j=k-1\), we obtain
\begin{align*}
D_n^d p(z)
&=
\sum_{k=1}^{m}
(-1)^{k-1}\binom{m-1}{k-1}\coef{k+d-1}{n}(p)\,z^{m-k}=
\sum_{j=0}^{m-1}
(-1)^j \binom{m-1}{j}\coef{j+d}{n}(p)\,z^{m-1-j}.
\end{align*}
Since \(m-1=n-d\), we obtain the assertion. 
\end{proof}

Recall that $\pols_n(\C)$ denotes the set of all monic polynomials of degree $n$ and $p\in\tilde\pols_{\leq n}(\C)$ is $d$-monic if $\coef{d}{n}(p)=1$ (Definition \ref{e_k}). The following result is immediate. 
\begin{corollary}\label{cor:monic_deriv}Let $d\in\{1,\ldots,n\}$ and $p\in\pols_{n}(\C)$. Then $\partial_n^d p\in\pols_{n-d}(\C)$. 

Moreover, if $p$ is $d$-monic then $D_n^d p\in\pols_{n-d}(\C)$.
\end{corollary}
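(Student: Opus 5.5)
The plan is to read both claims of Corollary~\ref{cor:monic_deriv} directly off the coefficient formulas of Lemma~\ref{lemma:normalized_derivatives_coefficients}. The only points to check are the leading coefficient and the degree of the resulting polynomial. Throughout, we use that $\pols_{n-d}(\C)$ is the set of monic polynomials of degree $n-d$, and that every such polynomial automatically has all its roots in $\C$.

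For the standard derivative, take $p\in\pols_n(\C)$. By Definition~\ref{e_k} we have $\coef{0}{n}(p)=1$. Lemma~\ref{lemma:normalized_derivatives_coefficients} gives
\[
\partial_n^d p(z)=\sum_{k=0}^{n-d}(-1)^k\binom{n-d}{k}\coef{k}{n}(p)\,z^{n-d-k}.
\]
The $k=0$ term is $\binom{n-d}{0}\coef{0}{n}(p)z^{n-d}=z^{n-d}$. All other terms have strictly lower degree. Hence $\partial_n^d p$ is monic of degree exactly $n-d$, so $\partial_n^d p\in\pols_{n-d}(\C)$. Note that the case $d=n$ yields the constant polynomial $1$, which is consistent with this.

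For the polar derivative, the lemma gives
\[
D_n^d p(z)=\sum_{k=0}^{n-d}(-1)^k\binom{n-d}{k}\coef{k+d}{n}(p)\,z^{n-d-k}.
\]
Its $z^{n-d}$ coefficient is $\coef{d}{n}(p)$. This equals $1$ exactly when $p$ is $d$-monic. Under that hypothesis, $D_n^d p$ is monic of degree $n-d$, and therefore lies in $\pols_{n-d}(\C)$. The monicity of $p$ itself plays no role in this part; only the $d$-monic condition is used.

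The argument has no real obstacle. The one step needing care is confirming the degree and leading coefficient by isolating the $k=0$ term in each formula, which the explicit expansions make immediate.
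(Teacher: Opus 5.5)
Your proposal is correct and matches the paper's approach: the paper states this corollary as immediate from Lemma~\ref{lemma:normalized_derivatives_coefficients}, and your argument supplies exactly that reading. You isolate the $k=0$ term, whose coefficient is $\coef{0}{n}(p)=1$ for $\partial_n^d p$ and $\coef{d}{n}(p)=1$ for $D_n^d p$.
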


The following result is standard in the literature \cite{Rahman}. We also apply Corollary \ref{cor:monic_deriv} to ensure monicity of $D_n^d p\in\pols_{n-d}$.
\begin{proposition}\label{prop:derivatives_preserve_nonnegative_roots}
Let $d\in\{1,\ldots,n\}$ and $p\in\pols_{n}(\R_{\ge0})$. Then $\partial_n^d p\in\pols_{n-d}(\R_{\ge0})$. 

Moreover, if $p$ is $d$-monic then $D_n^d p\in\pols_{n-d}(\R_{\ge0})$.
\end{proposition}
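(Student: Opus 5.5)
We argue one derivative at a time and then induct on $d$. Everything reduces to a single step, $d=1$, for each of the two operators $\partial_n$ and $D_n$, applied to a polynomial of degree $n$ (for $\partial$) or to a $1$-monic polynomial of degree at most $n$ (for $D$) whose roots are real and nonnegative. Monicity in the final statement is handled separately by Corollary~\ref{cor:monic_deriv}. So it suffices to prove that the roots are real and nonnegative.

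\emph{Standard derivative.} Let $p\in\pols_n(\R_{\ge0})$ with roots $0\le\lambda_1\le\dots\le\lambda_n$.
\begin{itemize}
\item By Rolle's theorem, $p'$ has a root in each interval $[\lambda_i,\lambda_{i+1}]$.
\item Multiple roots are counted with multiplicity: a root of $p$ of multiplicity $m$ is a root of $p'$ of multiplicity $m-1$.
\item This accounts for all $n-1$ roots of $p'$, and each lies in $[\lambda_1,\lambda_n]\subset\R_{\ge0}$.
\end{itemize}
Hence $\partial_n p\in\pols_{n-1}(\R_{\ge0})$. Iterating gives the claim for $\partial_n^d$, since each $\partial_n^{j}p$ is monic of degree $n-j$ with nonnegative roots.

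\emph{Polar derivative.} The cleanest route is the reversal $p^*(z)=z^np(1/z)$. A direct computation shows
\[
D_n p(z)=\frac{zp'(z)}{n}-p(z)=-\frac{1}{n}\,z^{n-1}(p^*)'(1/z),
\]
so $D_np$ is, up to the constant $-1/n$, the degree-$(n-1)$ reversal of $(p^*)'$. This is the classical fact that the polar derivative with respect to $0$ is the reversed derivative of the reversed polynomial.

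Write $p=z^{r}\prod_{i}(z-\lambda_i)$ with $\lambda_i>0$. Then $p^*=\prod_i(1-\lambda_i z)$ has degree $n-r$, roots $1/\lambda_i>0$, and a root "at infinity" of multiplicity $r$.
\begin{itemize}
\item By the Rolle argument above, $(p^*)'$ has only real positive roots; it is real-rooted since $p^*$ is.
\item Reversing back, with respect to degree $n-1$, gives roots $\lambda$ with $1/\lambda$ a positive root of $(p^*)'$.
\item It also gives a root $0$ of multiplicity $(n-1)-\deg(p^*)'=r$ when $p^*$ is nonconstant.
\end{itemize}
So $D_np\in\tilde\pols_{\le n-1}(\R_{\ge0})$.

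Alternatively, one can check directly from Lemma~\ref{lemma:normalized_derivatives_coefficients}, using the formula for $D_n^d$, that all signs stay alternating. Real-rootedness is the real content, and the reversal argument provides it.

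\emph{Main obstacle: degeneracy.} The one delicate step is degeneracy when iterating $D$. After applying $D$, the degree may drop, or the polynomial may even vanish, for example $D_n z^n=0$. The intermediate polynomials $D_n^{j}p$ therefore lie only in $\tilde\pols_{\le n-j}(\R_{\ge0})$, not necessarily in $\pols_{n-j}$. I would therefore run the induction in $\tilde\pols_{\le m}(\R_{\ge0})$ (nonzero case) and apply the reversal argument with formal degree $m$. Polynomials of lower actual degree are treated as having roots at infinity, which reverse to roots at $0$. The final hypothesis, that $p$ is $d$-monic, guarantees via Corollary~\ref{cor:monic_deriv} that $D_n^dp$ is monic of exact degree $n-d$. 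Combining this with nonnegative real-rootedness gives $D_n^dp\in\pols_{n-d}(\R_{\ge0})$.

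If the formal-degree bookkeeping becomes awkward, the fallback is a perturbation argument. Approximate $p$ by polynomials with simple positive roots, where the reversal argument is clean, and pass to the limit using Lemma~\ref{lem:conv_rel_pols}.
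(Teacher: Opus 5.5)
Your proposal is correct and has the same structure as the paper's proof, which only cites \cite{Rahman} for the root-location facts about $\partial_n$ and the polar derivative $D_n$, and then invokes Corollary~\ref{cor:monic_deriv} for the monicity of $D_n^d p$. Your Rolle argument, together with the reversal identity $D_np(z)=-\tfrac1n z^{n-1}(p^*)'(1/z)$ run in $\tilde\pols_{\le m}(\R_{\ge0})$ with formal degrees, is the classical argument behind that citation, so it supplies the details the paper leaves to the reference.
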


\subsection{Approximation of free convolutions} 
Finite free convolutions can be used to approximate convolutions in free probability: as the degree of the polynomials tends to infinity, the empirical measures of convolutions of polynomials converge weakly to their counterparts in free probability. In this section, we give the precise statements of some known results.

\subsubsection{Free probability transforms and free convolutions}\label{sec:tr}

Free additive and multiplicative convolutions (denoted by $\boxplus$ and $\boxtimes$, respectively) are the free-probability analogues of addition and multiplication. In this subsection we recall their analytic definitions in terms of the $R$- and $S$-transforms.

For $\mu\in\MM(\R)$, the Cauchy transform is
\[
  G_\mu(z)=\int_{\R}\frac{1}{z-t}\,\mu(\dt),\qquad z\in\C\setminus\R.
\]
It is analytic on $\C\setminus\R$ and satisfies $G_\mu(z)=z^{-1}+O(|z|^{-2})$ as $|z|\to\infty$.
It is known that $G_\mu$ admits a unique compositional inverse $K_\mu$ on a suitable domain
$\Gamma_\mu$ (a neighborhood where the inverse is defined), so that
\[
  G_\mu\bigl(K_\mu(z)\bigr)=z,\qquad z\in\Gamma_\mu.
\]
The $R$-transform of $\mu$ is defined by
\[
  R_\mu(z)=K_\mu(z)-\frac1z,\qquad z\in\Gamma_\mu.
\]
Given $\mu,\nu\in\MM(\R)$, their free additive convolution $\mu\boxplus\nu$ is the unique probability
measure on $\R$ satisfying
\[
  R_{\mu\boxplus\nu}(z)=R_\mu(z)+R_\nu(z),\qquad z\in\Gamma_\mu\cap\Gamma_\nu.
\]

Let $\mu\in\MM(\R_{\ge 0})$. The moment transform of $\mu$ is
\[
  \Psi_\mu(z):=\int_{0}^{\infty}\frac{t z}{1-t z}\,\mu(\dt),\qquad z\in\C\setminus\R_{\ge 0}.
\]
If $\mu\neq\delta_0$, then the compositional inverse $\Psi_\mu^{-1}$ exists in a neighborhood of
$(-1+\mu(\{0\}),0)$. The $S$-transform of $\mu$ is defined by
\[
  S_\mu(z):=\frac{1+z}{z}\,\Psi_\mu^{-1}(z),\qquad z\in(-1+\mu(\{0\}),0).
\]

If $\mu,\nu\in\MM(\R_{\ge 0})\setminus\{\delta_0\}$, their free multiplicative convolution $\mu\boxtimes\nu$
is the unique probability measure on $\R_{\ge 0}$ such that, on a common domain where the transforms
are defined,
\[
  S_{\mu\boxtimes\nu}(z)=S_\mu(z)\,S_\nu(z).
\]
\begin{remark}\label{rem:free}
Let $(\mathcal A,\tau)$ be a tracial $W^*$-probability space, that is, a von Neumann algebra
endowed with a faithful normal tracial state $\tau$, and let $\mu_\X$ denote the spectral distribution of a self-adjoint (possibly unbounded, affiliated) operator $\X$. If $\X$ and $\Y$
are self-adjoint and free (in the sense of Voiculescu), then
\[
  \mu_{\X+\Y}=\mu_\X\boxplus\mu_\Y.
\]
If $\X,\Y\ge 0$ are free, then
\[
  \mu_{\X^{1/2} \Y \X^{1/2}}=\mu_\X\boxtimes\mu_\Y.
\]
\end{remark}

\subsubsection{Weak convergence and relation to free convolutions}

Both finite free convolutions approximate their free counterparts. These results will be crucial in our link between finite free perpetuities and free perpetuities studied in \cite{FreePerp}.

For compactly supported limiting measures, these approximation results were previously obtained in the additive case by Arizmendi and Perales \cite[Theorem~5.2 and Corollary~5.3]{AP18}, and in the multiplicative case by Arizmendi, Garza-Vargas and Perales \cite[Theorem~1.4]{AGP23}. More recently, Jalowy, Kabluchko and Marynych gave alternative proofs of the convergence $\boxplus_n\to\boxplus$ and $\boxtimes_n\to\boxtimes$ using exponential profiles of polynomial coefficients; see \cite[Theorems~3.5 and~3.7]{JKM25}. We use below the following more general formulations.

\begin{theorem}[\cite{Fujie25}, Theorem 1.3]
\label{thm:fujie}
Let $(p_n)_{n\in\N}$ and $(q_n)_{n\in\N}$ be sequences of polynomials in $\pols_n(\R)$ such that $\meas{p_n} \weak \mu$ and $\meas{q_n} \weak \nu$ with $\mu, \nu \in \MM(\R)$. Then, 
\[
\meas{p_n \boxplus_n q_n}\weak \mu \boxplus \nu.
\]
\end{theorem}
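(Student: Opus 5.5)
The statement is Fujie's theorem that $\meas{p_n\boxplus_n q_n}\weak\mu\boxplus\nu$ for measures that need not be compactly supported. I would prove it in two steps. First treat the compactly supported case, which is classical. Then remove that assumption by truncation, using a stability estimate for $\boxplus_n$ in a metric that metrizes weak convergence.

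\emph{Step 1: uniformly bounded roots.} Suppose the roots of $p_n$ and $q_n$ lie in a fixed interval $[-M,M]$. By Proposition~\ref{prop:pres}, applied after a shift, and its real-rooted analogue, the roots of $p_n\boxplus_n q_n$ lie in $[-2M,2M]$. So it suffices to show convergence of moments. Moments of $\meas{p}$ are polynomial in the normalized coefficients $\coef{k}{n}(p)$, and in the limit they become polynomial in the finite free cumulants of \cite{AP18}. These cumulants are additive under $\boxplus_n$, which is immediate from the coefficient formula in Definition~\ref{def:finite_free_conv} by passing to exponential generating functions. They also converge to the free cumulants as $n\to\infty$ whenever the moments converge. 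Since free cumulants are additive under $\boxplus$, the moments of $\meas{p_n\boxplus_n q_n}$ converge to those of $\mu\boxplus\nu$. Weak convergence then follows because the limit is compactly supported and hence determined by its moments.

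\emph{Step 2: truncation.} For general $\mu,\nu$, fix $M$ and replace each root $\lambda$ of $p_n$ and $q_n$ by its clipped value $\max(-M,\min(M,\lambda))$. This gives $p_n^M$ and $q_n^M$, which converge weakly to the push-forwards $\mu^M$ and $\nu^M$ whenever $\pm M$ are not atoms of the limits. The key tool is a finite analogue of the Bercovici--Voiculescu Lipschitz estimate: if $p,\tilde p$ differ in at most $k$ roots, then $\meas{p\boxplus_n q}$ and $\meas{\tilde p\boxplus_n q}$ are within Kolmogorov distance $Ck/n$. I would prove it by interlacing. Changing one root of $p$ moves $p\boxplus_n q$ along a real-rooted segment, since $\boxplus_n$ is bilinear and preserves real-rootedness. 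The endpoints interlace, so the counting functions of the roots differ by at most $1/n$ per changed root. The number of clipped roots is $n\,\meas{p_n}(\R\setminus[-M,M])$, which is at most $n\varepsilon$ for large $M$ by tightness. It follows that the Kolmogorov distance between $\meas{p_n\boxplus_n q_n}$ and $\meas{p_n^M\boxplus_n q_n^M}$ is at most $2\varepsilon$, uniformly in $n$. On the free side, the corresponding estimate $d_K(\mu\boxplus\nu,\mu^M\boxplus\nu^M)\le d_K(\mu,\mu^M)+d_K(\nu,\nu^M)$ is known from Bercovici--Voiculescu. Combining these with Step 1 and letting $\varepsilon\to0$ gives the conclusion.

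\emph{Main obstacle.} The hardest step is the interlacing Lipschitz bound. One must check that moving a single root of $p$ produces polynomials $p\boxplus_n q$ whose roots interlace. I would derive this from the representation $p\boxplus_n q=\sum_j c_j\,\partial^j p$ with a real-rooted differential operator $q(\partial)$ (the MSS formula). This operator is linear and preserves real-rootedness. The family $(z-a)r(z)$, $a\in\R$, is a real-rooted pencil with interlacing members, so its image under the operator is again an interlacing pencil by Obreschkoff's theorem. That yields the $1/n$ bound on the counting functions per changed root.
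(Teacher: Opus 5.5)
Your proof is correct. It necessarily differs from the paper, because the paper gives no argument here: it imports the statement from \cite{Fujie25} and uses it only as a black box, in Corollary~\ref{cor:iterated_approximation} and Theorem~\ref{main_thm}.

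What you provide is a self-contained reduction of the unbounded case to the compactly supported case of \cite{AP18}. The key ingredient is a finite, rank-type form of the Bercovici--Voiculescu estimate: moving $k$ roots of one argument changes $\meas{p\boxplus_n q}$ by at most $k/n$ in Kolmogorov distance. This bound is uniform in $n$, and the number of clipped roots is controlled by tightness alone. Hence truncation commutes with $n\to\infty$, and no moment assumptions are needed. The citation buys brevity. Your route shows why the extension beyond compact support is soft: it uses only bilinearity, preservation of real-rootedness, and the compact case.

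Some points to tighten.

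(a) Interlacing does not follow from real-rootedness on the segment between the old and new root. For example, $z^2-1$ and $z^2-9$ are joined by the real-rooted segment $z^2-(1+8s)$, $s\in[0,1]$, yet they do not interlace. What you need, and do use in your last paragraph, is that
$\bigl((z-t)r\bigr)\boxplus_n q=(zr)\boxplus_n q-t\,(r\boxplus_n q)$
is real-rooted for every $t\in\R$. Here $r\boxplus_n q$ is real-rooted as a limit, by Lemma~\ref{lem:conv_rel_pols}. Obreschkoff's theorem then applies to the whole pencil.

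(b) This already follows from bilinearity together with the preservation of $\pols_n(\R)$ under $\boxplus_n$ recalled in the proof of Proposition~\ref{prop:pres}. The MSS differential-operator form is therefore unnecessary. Also, the symbol $\hat q$ with $\hat q(\partial)z^n=q(z)$ need not be real-rooted: for $n=2$ and $q=(z-1)^2$ one gets $\hat q(s)=1-s+s^2/2$. The relevant property is only that $f\mapsto f\boxplus_n q$ preserves real-rootedness on $\pols_n(\R)$.

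(c) Clipping is a continuous map, so no condition on atoms at $\pm M$ is needed.

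(d) Kolmogorov distance does not metrize weak convergence, so run the final $\varepsilon$-argument in the L\'evy distance, which Kolmogorov distance dominates. On the free side, weak continuity of $\boxplus$ (giving $\mu^M\boxplus\nu^M\weak\mu\boxplus\nu$ as $M\to\infty$) then suffices, and the Kolmogorov Lipschitz bound for $\boxplus$ is not needed.
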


\begin{theorem}[\cite{AFPU24}, Proposition 10.1]
\label{prop:approx_boxtimesn}
Let $(p_n)_{n\in \N}$ and $(q_n)_{n\in \N}$ be sequences of polynomials in $\pols_n(\R_{\geq 0})$ such that $\meas{p_n} \weak \mu$ and $\meas{q_n} \weak \nu$ with $\mu, \nu \in \MM(\R_{\geq 0})$. Then,
\[ \meas{p_n \boxtimes_n q_n} \weak \mu \boxtimes \nu. \]
\end{theorem}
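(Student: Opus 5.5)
The plan is to reduce to the compactly supported case, where the statement is known: \cite[Theorem~1.4]{AGP23} for the multiplicative convolution. The reduction uses a truncation argument together with a rank-type perturbation bound for $\boxtimes_n$.

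\textbf{Step 1: truncation.} Fix $M>0$ such that $M$ is an atom of neither $\mu$ nor $\nu$. Define $p_n^{M}$ by replacing every root $\lambda_i(p_n)>M$ by $M$, and define $q_n^M$ in the same way. Then $p_n^M,q_n^M\in\pols_n([0,M])$. By the continuous mapping theorem applied to $x\mapsto\min(x,M)$,
\[
\meas{p_n^M}\weak \mu^M:=\mu\circ(\min(\cdot,M))^{-1}
\qquad\text{and}\qquad
\meas{q_n^M}\weak \nu^M .
\]
All these measures live on $[0,M]$, so the compact case gives
\[
\meas{p_n^M\boxtimes_n q_n^M}\weak \mu^M\boxtimes\nu^M .
\]

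\textbf{Step 2: perturbation bound.} This is the step I expect to be the main obstacle. I want to show the following: if $p,p'\in\pols_n(\R_{\ge0})$ differ in at most $k$ roots and $q\in\pols_n(\R_{\ge0})$, then the distribution functions $F$ and $F'$ of $\meas{p\boxtimes_n q}$ and $\meas{p'\boxtimes_n q}$ satisfy
\[
\|F-F'\|_\infty\le k/n .
\]

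It suffices to treat $k=1$ and iterate. Write $p=(z-a)r$ and $p'=(z-b)r$. Then $p_t=(z-(1-t)a-tb)\,r$ is a path of real-rooted polynomials, and $p\boxtimes_n q$ is affine in the moving root. Hence $p\boxtimes_n q$ and $p'\boxtimes_n q$ are both members of the pencil $\alpha f+\beta g$ with $f=z\,r\boxtimes_n q$-type pieces. By Proposition~\ref{prop:pres} applied to every member of the family, all convex combinations of $p\boxtimes_n q$ and $p'\boxtimes_n q$ are real-rooted. The Hermite--Kakeya--Obreschkoff theorem then says the two polynomials interlace, so their root counting functions differ by at most one at every point.

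If this interlacing argument has a gap, for instance when degrees drop because of zero roots, I would first try the alternative route via monotonicity of the roots of $p\boxtimes_n q$ in each root of $p$, combined with Proposition~\ref{prop:conv_diff_deg}.

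\textbf{Step 3: conclusion.} Apply Step~2 twice, once in each factor. This gives that the Kolmogorov distance between $\meas{p_n\boxtimes_n q_n}$ and $\meas{p_n^M\boxtimes_n q_n^M}$ is at most
\[
\meas{p_n}((M,\infty))+\meas{q_n}((M,\infty)),
\]
which converges to $\mu((M,\infty))+\nu((M,\infty))$. This tends to $0$ as $M\to\infty$.

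Finally, $\mu^M\weak\mu$ and $\nu^M\weak\nu$, and $\boxtimes$ is weakly continuous on $\MM(\R_{\ge0})$ by Bercovici--Voiculescu. Hence $\mu^M\boxtimes\nu^M\weak\mu\boxtimes\nu$. (If $\mu$ or $\nu$ is $\delta_0$, the limit is $\delta_0$, and Step~2 still applies directly.)

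A standard $\varepsilon/3$ argument in the Lévy metric combines these estimates and yields
\[
\meas{p_n\boxtimes_n q_n}\weak\mu\boxtimes\nu .
\]
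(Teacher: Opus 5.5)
Your proof is correct, but it takes a genuinely different route from the one behind the quoted result. The paper gives no argument and cites \cite{AFPU24}, where the statement comes out of the finite $S$-transform. Because $\coef{k}{n}(p\boxtimes_n q)=\coef{k}{n}(p)\,\coef{k}{n}(q)$, one has exactly $\strans{p\boxtimes_n q}{n}=\strans{p}{n}\,\strans{q}{n}$ on the common domain. Theorem~\ref{thm:main}, together with $S_{\mu\boxtimes\nu}=S_\mu S_\nu$, then turns this into weak convergence. Two extra points are needed there: bookkeeping of zero roots (the multiplicity of $0$ in $p\boxtimes_n q$ is the larger of the two multiplicities, matching $(\mu\boxtimes\nu)(\{0\})=\max\{\mu(\{0\}),\nu(\{0\})\}$), and a separate treatment of $\delta_0$. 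That route needs neither a compact case nor weak continuity of $\boxtimes$, but all the analysis is packed into Theorem~\ref{thm:main}.

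Your route instead mirrors the Bercovici--Voiculescu extension of $\boxtimes$ to unbounded measures: truncate, apply the moment-method result of \cite{AGP23}, and control the error by a finite analogue of their rank-perturbation inequality. What it buys:
\begin{itemize}
\item it is independent of the $S$-transform machinery;
\item it handles $\delta_0$ uniformly, so your parenthetical special case is unnecessary;
\item it gives a quantitative by-product: changing $k$ roots of one factor moves $\meas{p\boxtimes_n q}$ by at most $k/n$ in Kolmogorov distance;
\item the same template, with two-sided clamping, also covers $\boxplus_n$ on $\pols_n(\R)$.
\end{itemize}

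One correction in Step~2. Hermite--Kakeya--Obreschkoff requires every real combination $\alpha P+\beta P'$ to be real-rooted, whereas Proposition~\ref{prop:pres} only gives you convex combinations. The right tool is the convex-combination criterion of Fell (also Dedieu, Chudnovsky--Seymour). It says that two monic real-rooted polynomials of the same degree, all of whose convex combinations are real-rooted, have a common interlacer, i.e.\ $\max\{\alpha_i,\beta_i\}\le\min\{\alpha_{i+1},\beta_{i+1}\}$ for the ordered roots. This is weaker than interlacing, but it still forces the root-counting functions to differ by at most one, which is all you use. Your concern about degree drop does not arise: every convex combination equals $p_t\boxtimes_n q$ with $p_t\in\pols_n(\R_{\ge 0})$, so it is monic of degree $n$ by Proposition~\ref{prop:pres}.
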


Using the above results, we can describe the weak convergence of empirical root distributions of $k$fold finite convolutions. 
\begin{corollary}
\label{cor:iterated_approximation}
Let $\kappa\in\N$ and let $\mathcal{I}\subset\{1,\dots,\kappa\}.$ For $n\in\N$, denote
\[
q_n
=
p_{n}^{[\kappa]}\boxast_n^{(\kappa)} \left( p_{n}^{[\kappa-1]}\boxast_n^{(\kappa-1)}\left(\cdots p_n^{[2]}\boxast_n^{(2)}\left( p_{n}^{[1]}\boxast_n^{(1)} p_n^{[0]}\right)\cdots \right)\right),
\]
where \(p_m^{[i]}\in \pols_m(\R_{\geq 0})\) for \(i=0,\ldots,\kappa\) and for $m\in\N$, 
\[
\boxast_m^{(i)} =
\begin{cases}
  \boxtimes_m,       & \text{if } i \in\mathcal{I}, \\
  \boxplus_m,      & \text{if } i \notin \mathcal{I}.
\end{cases}
\]
Assume that
\[
\meas{p_n^{[i]}}\weak \mu_i, \qquad i=0,\ldots,\kappa,
\]
for some \(\mu_i\in \MM(\R_{\geq 0})\). Then
\[
\meas{q_n}\weak \mu_\kappa\boxast^{(\kappa)} \left( \mu_{\kappa-1}\boxast^{(\kappa-1)}\left(\cdots \mu_2 \boxast^{(2)}\left( \mu_1\boxast^{(1)} \mu_0\right)\cdots \right)\right),
\]
where
\[
\boxast^{(i)} =
\begin{cases}
  \boxtimes,       & \text{if } i \in\mathcal{I}, \\
  \boxplus,      & \text{if } i \notin \mathcal{I}.
\end{cases}
\]
\end{corollary}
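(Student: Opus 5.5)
Proof by induction on $\kappa$. For $i=0,\dots,\kappa$ set
\[
q_n^{(0)}:=p_n^{[0]},\qquad q_n^{(i)}:=p_n^{[i]}\boxast_n^{(i)} q_n^{(i-1)},\qquad i\ge 1,
\]
so that $q_n=q_n^{(\kappa)}$. Correspondingly, let $\nu_0:=\mu_0$ and $\nu_i:=\mu_i\boxast^{(i)}\nu_{i-1}$. The claim to prove is the stronger one that, for every $i$,
\[
q_n^{(i)}\in\pols_n(\R_{\ge0}),\qquad \nu_i\in\MM(\R_{\ge0}),\qquad \meas{q_n^{(i)}}\weak \nu_i .
\]
The base case $i=0$ is exactly the hypothesis on $p_n^{[0]}$.

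For the inductive step, assume the claim for $i-1$.
\begin{itemize}
\item By Proposition~\ref{prop:pres}, both $p_n^{[i]}\boxplus_n q_n^{(i-1)}$ and $p_n^{[i]}\boxtimes_n q_n^{(i-1)}$ lie in $\pols_n(\R_{\ge 0})$. Hence $q_n^{(i)}$ stays in the class required by the two approximation theorems.
\item If $i\in\mathcal I$, Theorem~\ref{prop:approx_boxtimesn} applies, since $\meas{p_n^{[i]}}\weak\mu_i$ and $\meas{q_n^{(i-1)}}\weak\nu_{i-1}$ with both limits on $\R_{\ge0}$. It gives $\meas{q_n^{(i)}}\weak \mu_i\boxtimes\nu_{i-1}=\nu_i$.
\item If $i\notin\mathcal I$, Theorem~\ref{thm:fujie} applies because $\pols_n(\R_{\ge0})\subset\pols_n(\R)$. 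It gives $\meas{q_n^{(i)}}\weak\mu_i\boxplus\nu_{i-1}=\nu_i$.
\end{itemize}

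The only point needing care is that the limit $\nu_i$ remains in $\MM(\R_{\ge0})$, so that the next multiplicative step is admissible. There are two ways to see this.
\begin{itemize}
\item Directly: $\boxplus$ and $\boxtimes$ of measures on $\R_{\ge0}$ are again supported on $\R_{\ge0}$. For $\boxplus$ this is the sum of free positive operators, as in Remark~\ref{rem:free}.
\item Alternatively: $\nu_i$ is the weak limit of measures supported on the closed set $\R_{\ge0}$, so by the Portmanteau theorem it is supported there as well.
\end{itemize}
The second argument avoids any operator theory and is the one I would use.

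One degenerate case also needs a comment. The $S$-transform definition of $\boxtimes$ excludes $\delta_0$. I would read $\boxtimes$ as the usual convolution extended by $\mu\boxtimes\delta_0=\delta_0$, which is consistent with Theorem~\ref{prop:approx_boxtimesn} as cited. With that convention the induction closes and yields the stated limit at $i=\kappa$.
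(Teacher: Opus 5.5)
Your proof is correct and matches the paper's argument: you use the same recursive intermediate polynomials and measures, Proposition~\ref{prop:pres} to keep each intermediate polynomial in $\pols_n(\R_{\ge 0})$, and Theorems~\ref{thm:fujie} and~\ref{prop:approx_boxtimesn} applied inductively. Your Portmanteau check that each intermediate limit stays in $\MM(\R_{\ge 0})$, and your remark on the convention $\mu\boxtimes\delta_0=\delta_0$, fill in points the paper leaves implicit.
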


\begin{proof}
Set \(r_n^{[0]}:=p_n^{[0]}\) and recursively
\[
r_n^{[j]}:=p_n^{[j]}\boxast_n^{(j)} r_n^{[j-1]}, \qquad j=1,\dots,\kappa.
\]
Also define \(\rho^{[0]}:=\mu_0\) and
\[
\rho^{[j]}:=\mu_j\boxast^{(j)}\rho^{[j-1]}, \qquad j=1,\dots,\kappa.
\]
By Proposition \ref{prop:pres}, $r_n^{[j]}\in\pols_n(\R_{\geq 0})$ for $j=0,1,\ldots,\kappa$. Using Theorem~\ref{thm:fujie} and Theorem~\ref{prop:approx_boxtimesn} and induction on $j$, we obtain 
\[
\meas{r_n^{[j]}}\weak \rho^{[j]},\quad j=1,\ldots,\kappa. 
\]
For \(j=\kappa\) we obtain the desired convergence.
\end{proof}

There is a convenient tool for proving weak convergence of $\meas{p_n}$; following \cite{AFPU24}, we introduce the finite $S$-transform and explain its relation to weak convergence below.

\begin{definition}[Finite $S$-transform]
\label{def:finite.Strans}
Let $p \in \polplus$ such that $p(x)\neq x^n$ and $r_n$ the multiplicity of the root $0$ in $p$. The finite $S$-transform of $p$ is the map 
\[S_p^{(n)} \colon \left\{-\frac{k}{n}\colon  k =1,2,\dots, n-r_n \right\} \to \R_{> 0}\] such that
\begin{equation}
\label{eq:def.Strans}
\strans{p}{n}\left(-\frac{k}{n}\right) := \frac{\coef{k-1}{n}(p)}{\coef{k}{n}(p)} \quad \text{ for } \quad k=1,2, \dots, n-r_n.
\end{equation}
\end{definition}

\begin{theorem}
\label{thm:main} 
Let $(p_n)_{n\in \N}$ be a sequence of polynomials with $p_n\in \pols_n(\R_{\geq 0})$ and $\mu\in \MM(\R_{\geq 0})\backslash \{\delta_0\}$. The following are equivalent:
\begin{enumerate}
\item The weak convergence: $\meas{p_n} \weak \mu$. 
\item For every $t\in (0,1-\mu(\{0\}))$ and every sequence $(k_n)_{n\in\N}$ with 
\[
1\leq k_n \leq n\quad\mbox{and}\quad\lim_{n\to\infty} \frac{k_n}{n}=t,
\]
one has 
\begin{equation}
\label{eq:intro.limit.S}
\lim_{\substack{n\to \infty}}\strans{p_n}{n}\left(-\frac{k_n}{n}\right)= S_\mu(-t),
\end{equation}
where $S_\mu$ is the $S$-transform of $\mu$. 
\end{enumerate}
\end{theorem}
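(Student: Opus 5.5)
The plan is to reduce the finite $S$-transform to a single root of an iterated normalized derivative, and then use limit theorems for repeated (polar) differentiation. By Lemma~\ref{lemma:normalized_derivatives_coefficients}, $D_n^{k-1}p_n$ has coefficients $\coef{j+k-1}{n}(p_n)$, $j=0,\dots,n-k+1$. Hence
\[
\partial_{n-k+1}^{\,n-k}D_n^{k-1}p_n(z)=\coef{k-1}{n}(p_n)\,z-\coef{k}{n}(p_n).
\]
Consequently $1/\strans{p_n}{n}(-k/n)=\coef{k}{n}(p_n)/\coef{k-1}{n}(p_n)$ is the mean of the roots of the rescaled polynomial $D_n^{k-1}p_n/\coef{k-1}{n}(p_n)\in\pols_{n-k+1}(\R_{\ge0})$. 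This polynomial is monic by Corollary~\ref{cor:monic_deriv} and has nonnegative roots by Proposition~\ref{prop:derivatives_preserve_nonnegative_roots}. So statement (2) concerns the first moment of the root distribution after a fraction $\approx t$ of polar derivatives has been applied.

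For (1)$\Rightarrow$(2) I would proceed in three steps.
\begin{enumerate}
\item Show that if $\meas{p_n}\weak\mu$ and $k_n/n\to t$, then $\meas{D_n^{k_n-1}p_n}$ converges weakly to an explicit measure $\nu_t$. This is the polar-derivative analogue of the result that repeated differentiation yields free fractional convolution powers; $\nu_t$ should be a dilated free multiplicative fractional power of $\mu$. I would obtain it by writing $D_n^{k}p_n$ as a finite free multiplicative convolution of $p_n$ with an explicit Laguerre/Jacobi-type polynomial, via Proposition~\ref{prop:conv_diff_deg}, and then invoking Theorem~\ref{prop:approx_boxtimesn}.
\item Compute the mean of $\nu_t$ through $S$-transforms and check that it equals $1/S_\mu(-t)$. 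This uses the standard relation between $S_\mu$ and the compression or fractional power.
\item Pass from weak convergence to convergence of first moments.
\end{enumerate}

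Step 3 is the main obstacle, since $\mu$ need not have a finite mean and roots may escape to infinity. I would first exploit the restriction $t<1-\mu(\{0\})$ and the fact that a polar derivative pushes roots toward $0$, and try to show that the roots of $D_n^{k_n-1}p_n$ are uniformly integrable. If that fails, I would bound the ratio $\coef{k}{n}/\coef{k-1}{n}$ from above and below by monotonicity in $k$: Newton's inequalities give that $\coef{k-1}{n}/\coef{k}{n}$ is nondecreasing in $k$. This would let me sandwich the ratio between quantities depending only on truncated versions of $\mu$.

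For (2)$\Rightarrow$(1) I would use that $\log\coef{k}{n}(p_n)=-\sum_{j\le k}\log\strans{p_n}{n}(-j/n)$. By monotonicity and the pointwise convergence in (2), this is a Riemann sum converging to $-n\int_0^{k/n}\log S_\mu(-s)\,ds$ up to $o(n)$. From these coefficient asymptotics I would first deduce tightness of $\meas{p_n}$: control of the large roots comes from $\coef{k}{n}$ for $k$ small, and control of the mass at $0$ from $k$ near $n(1-\mu(\{0\}))$. Given tightness, take any subsequential weak limit $\mu'$. The already proven implication (1)$\Rightarrow$(2) gives $S_{\mu'}=S_\mu$ on the relevant interval, and since the $S$-transform determines the measure, $\mu'=\mu$. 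Hence the whole sequence converges to $\mu$.
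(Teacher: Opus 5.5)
The paper does not prove this theorem; it is quoted from \cite{AFPU24}, so your outline has to stand on its own. There is a genuine gap in Step~1 of (1)$\Rightarrow$(2), and further problems in Step~3 and in (2)$\Rightarrow$(1).

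\textbf{Step~1 fails as described.} Your opening reduction is correct: $1/\strans{p_n}{n}(-k/n)$ is the mean root of $D_n^{k-1}p_n$. The trouble is the convolution representation. Proposition~\ref{prop:conv_diff_deg} with $q=(z-1)^{n-d}$, the unit of $\boxtimes_{n-d}$, gives $D_n^dp=p\boxtimes_n(z-1)^{n-d}$.
\begin{itemize}
\item The second factor has degree $n-d$. In the degree-$n$ normalization it has $d$ roots at infinity; formally its root law tends to $(1-t)\delta_1+t\,\delta_{\infty}$.
\item So it is not in $\pols_n(\R_{\ge0})$, and Theorem~\ref{prop:approx_boxtimesn} does not apply.
\item No genuine degree-$n$ factor can replace it. The operation $\boxtimes_n$ multiplies the coefficients $\coef{j}{n}$ index by index, whereas $D_n^d$ shifts the index by $d$.
\end{itemize}
Your guess for the limit is also wrong. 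For $\mu(\{0\})=0$, the reversal $x\mapsto 1/x$ intertwines $D_n$ with $\partial_n$, and this yields $S_{\nu_t}(z)=S_\mu(-t+(1-t)z)$. For a Marchenko--Pastur law of rate $\lambda>1$, this is a dilated Marchenko--Pastur law of rate $(\lambda-t)/(1-t)$, not a dilated $\boxtimes$-power.

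In addition, the non-compact version of Theorem~\ref{prop:approx_boxtimesn} is, as far as I know, derived in \cite{AFPU24} from the very criterion you are proving, via $S^{(n)}_{p\boxtimes_n q}=S^{(n)}_pS^{(n)}_q$. Invoking it here is therefore circular. Only the compactly supported version of \cite{AGP23} is available to you.

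A workable representation uses ordinary derivatives instead:
\begin{itemize}
\item $\strans{p}{n}(-\tfrac kn)=\int x^{-1}\,\meas{\partial_n^{n-k}p}(\dx)$.
\item $z^{n-k}\partial_n^{n-k}p=p\boxtimes_n z^{n-k}(z-1)^k$, where the second factor is a legitimate element of $\pols_n(\R_{\ge0})$ with root law tending to $(1-t)\delta_0+t\delta_1$.
\item The limit of $\meas{\partial_n^{n-k_n}p_n}$ is $\mu^{\boxplus 1/t}$ dilated by $t$, and its $(-1)$st moment is exactly $S_\mu(-t)$.
\end{itemize}

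\textbf{Step~3 uses the wrong monotonicity.} You rightly flag this step; it is the analytic heart of the theorem. Monotonicity of $\coef{k-1}{n}/\coef{k}{n}$ in $k$ does not compare $p_n$ with truncated polynomials. What does is that this ratio is nonincreasing in each root, which also follows from Newton's inequalities.
\begin{itemize}
\item Capping the roots at $K$ gives an upper bound.
\item Deleting the $r$ roots above $K$ (leaving $p'$) gives $\strans{p}{n}(-\tfrac kn)\ge\tfrac{k-r}{k}\,\strans{p'}{n-r}\bigl(-\tfrac{k-r}{n-r}\bigr)$.
\item Similar moves at $0$ (raising small roots to $\varepsilon$, or setting them to $0$) reduce everything to roots in $[\varepsilon,K]$.
\end{itemize}
On $[\varepsilon,K]$, \cite{AGP23} applies and the means converge trivially.

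\textbf{The asymptotics in (2)$\Rightarrow$(1) are false.} You claim $\log\coef{k}{n}(p_n)=-n\int_0^{k/n}\log S_\mu(-s)\,{\rm d}s+o(n)$. Take $p_n(z)=(z-1)^{n-1}(z-M_n)$ with $\log M_n/n\to\infty$. Then (1) and (2) hold with $\mu=\delta_1$, yet $\log\coef{k}{n}(p_n)\ge\log(kM_n/n)$. The point is that nothing bounds $\strans{p_n}{n}(-j/n)$ from below when $j=o(n)$.

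Get tightness from interlacing instead. Suppose $\meas{p_n}((K,\infty))\ge\varepsilon>t$.
\begin{itemize}
\item Then $D_n^{k_n-1}p_n$ keeps at least $\varepsilon n-k_n+1$ roots above $K$.
\item Its mean root, which is $1/\strans{p_n}{n}(-k_n/n)$, is then at least about $\tfrac{\varepsilon-t}{1-t}K$.
\item Since $t>0$ can be taken small, this gives tightness.
\end{itemize}
The same root count for $\partial_n^{n-k_n}p_n$ near $0$ shows that every subsequential limit $\mu'$ satisfies $\mu'(\{0\})\le\mu(\{0\})$. With that in place, your subsequence-and-uniqueness argument goes through.
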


\section{Motivating example}\label{sec:ex}
\subsection{Jacobi polynomials}\label{sec:hyp}

Below we define a family of polynomials that will serve as a motivating example for our subsequent results. This family forms a special case of the class of Jacobi polynomials; for a general definition of hypergeometric polynomials, we refer to \cite{MMP23}; see also \cite{JKM25b}.

\begin{definition}
Let $n\in \N$, $a\in \C$ and $b\in \C\setminus \{1,1-\frac1n,1-\frac2n,\dots,\frac1n\}$.
We define $p_{n,(a,b)}\in \pols_n(\C)$ by
\[
\coef{k}{n}\!\left(p_{n,(a,b)}\right)
= (-1)^k\,\frac{(-an)_k}{((b-1)n)_k},
\qquad k=0,1,\dots,n,
\]
where $(x)_0:=1$ and
\[
(x)_k = x(x+1)\cdots(x+k-1), \qquad k\ge 1,
\]
denotes the rising factorial.
\end{definition}

    Using the terminology and notation of \cite[Section 5.2]{MMP23}, $p_{n,(a,b)}$ is a Jacobi polynomial and we have 
\[
p_{n,(a,b)}(z) = (-1)^n\HGP{n}{a}{1-b}(-z).
\]

\begin{proposition}\label{thm:hypergeom_zeros}
We have
\[
p_{n,(a,b)}(z)
= z^n\,{}_2F_1\!\left(-n,-an;(b-1)n;-z^{-1}\right).
\]
Moreover, if $a>1-\frac1n$ and $b>1$, then
\[
p_{n,(a,b)} \in \pols_n(\R_{>0}).
\]
\end{proposition}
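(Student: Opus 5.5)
The first identity I will get by a direct coefficient comparison. Writing
\[
p_{n,(a,b)}(z)=\sum_{k=0}^n(-1)^k\binom nk\coef{k}{n}(p_{n,(a,b)})z^{n-k}=\sum_{k=0}^n\binom nk\frac{(-an)_k}{((b-1)n)_k}z^{n-k},
\]
I use $\binom nk=(-1)^k(-n)_k/k!$. This gives
\[
p_{n,(a,b)}(z)=z^n\sum_{k=0}^n\frac{(-n)_k(-an)_k}{((b-1)n)_k\,k!}(-z^{-1})^k .
\]
The series terminates at $k=n$ because $(-n)_k=0$ for $k>n$. The assumption on $b$ guarantees $((b-1)n)_k\neq0$ for $k\le n$, since $(b-1)n\notin\{0,-1,\dots,-(n-1)\}$. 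This is exactly ${}_2F_1(-n,-an;(b-1)n;-z^{-1})$, which settles the first claim.

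For real-rootedness I would realize $p_{n,(a,b)}$ as a finite free multiplicative convolution of simpler polynomials with positive roots. Since $\coef{k}{n}$ is multiplicative under $\boxtimes_n$, I factor
\[
\coef{k}{n}(p_{n,(a,b)})=(-1)^k(-an)_k\cdot\frac{1}{((b-1)n)_k}.
\]
The two factors are handled as follows.

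\emph{The Laguerre-type factor.} Let $q_1$ have $\coef{k}{n}(q_1)=(-1)^k(-an)_k/n^k=\prod_{j<k}(a-j/n)$. Up to scaling, this is a Laguerre polynomial $L_n^{(\alpha)}$ with $\alpha=an-n$. Its roots are positive when $\alpha>-1$, i.e.\ $a>1-\frac1n$. I would verify this identification from the standard ${}_1F_1$ expansion $L_n^{(\alpha)}(x)\propto{}_1F_1(-n;\alpha+1;x)$, in the same way as the first step. For these parameters $q_1$ has degree $n$ and is monic, so $q_1\in\pols_n(\R_{>0})$.

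\emph{The Bessel-type factor.} Let $q_2$ have $\coef{k}{n}(q_2)=n^k/((b-1)n)_k$. This is the reversed Laguerre polynomial $z^nL_n^{(\beta)}(c/z)$ with $\beta=-(b-1)n-n$ suitably chosen, i.e.\ a Bessel-type polynomial. Real-rootedness here is the delicate point, since Bessel polynomials usually have complex roots. I would therefore instead take $q_2$ to be the polynomial with $\coef{k}{n}(q_2)=\binom{N}{k}^{-1}\binom{N}{k}\cdot\ldots$, that is, obtain $1/((b-1)n)_k$ by a degree-reduction trick. Consider the monic polynomial $r(z)=z^{m-n}(z-1)^n$, or rather use Proposition~\ref{prop:conv_diff_deg} with a larger degree $m$. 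The normalized derivatives $\partial_m^{m-n}$ of a polynomial $(z-1)^{m}$-type object produce coefficients of the form $\binom{m}{k}^{-1}$-ratios, which equal falling-factorial reciprocals when $(b-1)n=m-n+1$ is an integer. For non-integer $b$ I would then need an interpolation or limiting argument via Lemma~\ref{lem:conv_rel_pols}.

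Because of this difficulty, my preferred route is to cite the classical result. By \cite[Section 5.2]{MMP23}, the Jacobi polynomials $\HGP{n}{a}{1-b}$ have all roots in a prescribed real interval under parameter conditions equivalent to $a>1-\frac1n$ and $b>1$. These conditions correspond to Jacobi parameters $\alpha=an-n>-1$ and $\beta=(b-1)n>0$, under the change of variables $x\mapsto -z$ mapping $[-1,0]$ or $(-\infty,0)$ to $\R_{>0}$.

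Positivity, as opposed to mere nonnegativity, then follows from sign considerations. For $a>1-\frac1n$ and $b>1$ every coefficient $\coef{k}{n}=\prod_{j<k}(an-j)/((b-1)n+j)$ is strictly positive. Hence $p(-z)$ has coefficients of constant sign, so $p$ has no roots in $(-\infty,0]$; in particular $p(0)\ne0$. Combined with real-rootedness, this gives $p\in\pols_n(\R_{>0})$.

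The main obstacle is real-rootedness itself. I expect to establish it by identifying $p_{n,(a,b)}$ with a classical Jacobi polynomial $P_n^{(\alpha,\beta)}$, evaluated at a Möbius-transformed variable $(1+z)/(1-z)$-type argument, and then invoking classical interlacing/orthogonality for the valid parameter range.
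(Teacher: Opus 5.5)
Your derivation of the hypergeometric identity is correct and is the same coefficient comparison the paper makes. Your sign argument is also fine: each $\coef{k}{n}(p_{n,(a,b)})=\prod_{j<k}\frac{an-j}{(b-1)n+j}>0$, so $p_{n,(a,b)}$ has no zeros in $(-\infty,0]$.

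The gap is real-rootedness, which the proposal never proves. It ends with ``I expect to establish it by identifying $p_{n,(a,b)}$ with a classical Jacobi polynomial'' and a citation whose content is not pinned down. That identification is the whole difficulty, and a direct match does not give it. Comparing ${}_2F_1(-n,-an;(b-1)n;u)$ with ${}_2F_1\bigl(-n,n+\alpha+\beta+1;\alpha+1;\tfrac{1-x}{2}\bigr)=\frac{n!}{(\alpha+1)_n}P_n^{(\alpha,\beta)}(x)$ forces $\alpha=(b-1)n-1$ but $\beta=-(a+b)n<-1$. In that range orthogonality and interlacing say nothing. The missing step is Pfaff's transformation (which the paper uses in Proposition~\ref{prop:pff_shift_factorization}). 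It yields
\[
p_{n,(a,b)}(z)=(z+1)^n\,{}_2F_1\Bigl(-n,(a+b-1)n;(b-1)n;\frac{1}{z+1}\Bigr)=\frac{n!}{((b-1)n)_n}\,(z+1)^n\,P_n^{((b-1)n-1,\,(a-1)n)}\Bigl(\frac{z-1}{z+1}\Bigr).
\]
Now both parameters exceed $-1$ exactly when $b>1$ and $a>1-\frac1n$. The $n$ simple zeros $x_i\in(-1,1)$ are carried to $z_i=\frac{1+x_i}{1-x_i}\in(0,\infty)$. So the relevant change of variables is this M\"obius map, not the reflection $x\mapsto -z$.

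Separately, the $\boxtimes_n$ route you abandoned works; you dropped it because of a miscomputed parameter. With $c=(b-1)n$,
\[
z^nq_2(1/z)=\sum_{k=0}^n\frac{(-n)_k}{k!\,(c)_k}(nz)^k={}_1F_1(-n;c;nz)=\frac{n!}{(c)_n}L_n^{(c-1)}(nz).
\]
So $q_2$ is the reversed Laguerre polynomial with parameter $(b-1)n-1>-1$, not $-(b-1)n-n$, which is where your Bessel-type worry comes from. Its zeros are $n/\xi_i$, where $\xi_i>0$ are the zeros of $L_n^{((b-1)n-1)}$, so $q_2\in\pols_n(\R_{>0})$. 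Your identification $q_1\propto L_n^{(an-n)}(nz)\in\pols_n(\R_{>0})$ is correct. The factorization $p_{n,(a,b)}=q_1\boxtimes_n q_2$ and Proposition~\ref{prop:pres} then give $p_{n,(a,b)}\in\pols_n(\R_{\ge0})$, and your sign argument upgrades this to $\R_{>0}$. This is the finite counterpart of $f\mathcal B'_{a,b}=\mu_a\boxtimes\mu_b^{-1}$ and needs only the zeros of Laguerre polynomials; no degree-reduction or limiting argument is required.

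The paper takes neither route. It quotes a zero-location theorem stating that ${}_2F_1(-n,-an;(b-1)n;z)$ has all zeros in $\R_{<0}$ for $a>1-\frac1n$, $b>1$, and transfers this via $\lambda\mapsto-\lambda^{-1}$. Any of the three arguments closes the gap, but as written your proposal leaves its only substantive step unproved.
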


\begin{proof}
The hypergeometric representation is immediate from the definition:
\[
p_{n,(a,b)}(z)
=\sum_{k=0}^n \binom{n}{k}\frac{(-an)_k}{((b-1)n)_k}z^{n-k}
= z^n\,{}_2F_1\!\left(-n,-an;(b-1)n;-z^{-1}\right).
\]
Denote 
\[
q(z):={}_2F_1(-n,-an;(b-1)n;z)
\]
Since
\[
p_{n,(a,b)}(z)=z^n q(-z^{-1}),
\]
a number $\lambda$ is a root of $q$ if and only if it is different from $0$ and $-\lambda^{-1}$ is a root of $p_{n,(a,b)}$.

It follows from \cite[Theorem 1(i)]{RealZeros} that if $a>1-\frac1n$ and $b>1$, the polynomial $q$ has all its roots in $\R_{<0}$.  Thus, all roots of $p_{n,(a,b)}$ are positive.
\end{proof}

Let
\[
b_i = 1-\frac{i-1}{n},\qquad i=1,\ldots,n.
\]
As \(b\to b_i\), the normalized coefficients
\[
\coef{k}{n}(p_{n,(a,b)})
=
(-1)^k\frac{(-an)_k}{((b-1)n)_k}
\]
remain finite for \(k<i\), while for \(k\ge i\) they have a simple pole provided \((-an)_k\neq 0\). 

Assume that \((-an)_i\neq0\). Then, as \(b\to b_i\), exactly \(i\) roots of
\(p_{n,(a,b)}\) escape to infinity, while the remaining \(n-i\) roots converge
to finite complex numbers, counted with multiplicity. Consequently,
\[
\meas{p_{n,(a,b)}} \vague \nu_{n,(a,b_i)},
\]
where \(\vague\) denotes vague convergence and
\(\nu_{n,(a,b_i)}\) is a subprobability measure with
\[
\nu_{n,(a,b_i)}(\C)=\frac{n-i}{n}.
\]
The finite limiting roots will be identified below as the roots of the 
degree \(n-i\) monic polynomial: we extend the definition of \(p_{n,(a,b)}\) to \(b=b_i\) by requiring that
\[
\meas{p_{n,(a,b_i)}}=\frac{n}{n-i}\nu_{n,(a,b_i)},
\qquad
p_{n,(a,b_i)}\in\pols_{n-i}(\C).
\]
For the moment assume that $(-an)_i\neq 0$,
so that $\coef{i}{n}(p_{n,(a,b)})\neq 0$. Then
\begin{align}\label{eq:defb=1}
p_{n,(a,b_i)}(z):=
\lim_{b\to b_i}
\frac{
p_{n,(a,b)}(z)
}{
(-1)^i\binom{n}{i}\coef{i}{n}(p_{n,(a,b)}).
}
\end{align}
The coefficients in the last expression are polynomial functions of \(a\), see below.
Therefore this formula extends uniquely to all \(a\in\C\), and we use it as
the definition of \(p_{n,(a,b_i)}\) for arbitrary \(a\).
\begin{proposition}\label{prop:b_i}
Fix $i\in\{1,\ldots,n\}$ and $b_i=1-\frac{i-1}{n}$ and \(a\in\C\). We have
\[
\coef{k}{n-i}(p_{n,(a,b_i)})
= (-1)^k
\frac{\falling{i-an}{k}}{(i+1)_k},\qquad k=0,\ldots,n-i.
\]
Equivalently,
\[
p_{n,(a,b_i)}(z)
=
z^{n-i}{}_2F_1(i-n,i-an;i+1;-z^{-1}).
\]
Moreover, if \(a>1-\frac1n\), then
\[
p_{n,(a,1)}\in\pols_{n-1}(\R_{\ge0}).
\]
\end{proposition}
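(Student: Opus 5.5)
The plan is to compute the limit in \eqref{eq:defb=1} coefficientwise, assuming first that $(-an)_i\neq 0$, and then to extend to all $a\in\C$ by polynomiality. The coefficient of $z^{n-k}$ in $p_{n,(a,b)}$ is $\binom{n}{k}\frac{(-an)_k}{((b-1)n)_k}$. The normalizing constant $(-1)^i\binom{n}{i}\coef{i}{n}(p_{n,(a,b)})$ equals $\binom{n}{i}\frac{(-an)_i}{((b-1)n)_i}$. So the coefficient of $z^{n-k}$ in the quotient is
\[
\frac{\binom{n}{k}}{\binom{n}{i}}\cdot\frac{(-an)_k}{(-an)_i}\cdot\frac{((b-1)n)_i}{((b-1)n)_k}.
\]
As $b\to b_i$ we have $(b-1)n\to 1-i$.

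\emph{Case $k<i$.} Write $((b-1)n)_i/((b-1)n)_k=((b-1)n+k)_{i-k}$. This product contains the factor $(b-1)n+i-1$, which tends to $0$, while $(-an)_k$ stays finite. Hence these coefficients vanish in the limit, and the limit has degree at most $n-i$.

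\emph{Case $k=i+j\ge i$.} Here $((b-1)n)_i/((b-1)n)_k=1/((b-1)n+i)_j\to 1/(1)_j=1/j!$, and $(-an)_k/(-an)_i=(i-an)_j$. It remains to simplify the binomials:
\[
\frac{\binom{n}{i+j}}{\binom{n}{i}\,j!}=\frac{i!\,(n-i)!}{(i+j)!\,(n-i-j)!\,j!}=\binom{n-i}{j}\frac{1}{(i+1)_j}.
\]
Thus the limit is $\sum_{j=0}^{n-i}\binom{n-i}{j}\frac{(i-an)_j}{(i+1)_j}z^{n-i-j}$. It is monic of degree $n-i$, and reading off normalized coefficients gives $\coef{j}{n-i}(p_{n,(a,b_i)})=(-1)^j\frac{(i-an)_j}{(i+1)_j}$. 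These coefficients are polynomial in $a$, which justifies extending the definition (and the formula) to all $a\in\C$. The ${}_2F_1$ form follows from $(-1)^j\binom{n-i}{j}=\frac{(i-n)_j}{j!}$, which gives
\[
z^{n-i}\sum_j \frac{(i-n)_j(i-an)_j}{(i+1)_j\,j!}(-z^{-1})^j .
\]

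For the positivity claim, take $i=1$ and $a>1-\frac1n$; in particular $a>0$, so $(-an)_1=-an\neq 0$ and the limit formula applies directly. Let $b\downarrow 1$ along real values $b>1$. By Proposition~\ref{thm:hypergeom_zeros}, each $p_{n,(a,b)}\in\pols_n(\R_{>0})$. Dividing by the nonzero real constant $(-1)\binom{n}{1}\coef{1}{n}(p_{n,(a,b)})$ keeps the polynomial in $\tilde\pols_{\le n}(\R_{\ge0})$, since the zeros are unchanged. These quotients converge pointwise to $p_{n,(a,1)}$. Lemma~\ref{lem:conv_rel_pols} then gives $p_{n,(a,1)}\in\tilde\pols_{\le n}(\R_{\ge0})$. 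Since $p_{n,(a,1)}$ is monic of degree $n-1$, we conclude $p_{n,(a,1)}\in\pols_{n-1}(\R_{\ge0})$.

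The only delicate point is the bookkeeping of vanishing Pochhammer factors, namely checking that exactly one factor degenerates in both numerator and denominator for $k\ge i$. The sign of the normalizing constant is irrelevant, because only root locations matter.
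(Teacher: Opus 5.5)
Your proof is correct and follows the paper's argument essentially step by step. Both proofs compute the coefficientwise limit under $(-an)_i\neq0$ and use the cancellation $((b-1)n)_i/((b-1)n)_{i+j}=1/((b-1)n+i)_j$ together with the same binomial identity. Both then extend to all $a\in\C$ by polynomiality, and both get the positivity claim by letting $b\downarrow1$ and combining Proposition~\ref{thm:hypergeom_zeros} with Lemma~\ref{lem:conv_rel_pols}. Your explicit checks, that $a>0$ forces $(-an)_1\neq0$ and that the ${}_2F_1$ form follows from $(-1)^j\binom{n-i}{j}=(i-n)_j/j!$, usefully fill in details the paper leaves implicit.
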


\begin{proof}
Set $\alpha=(b-1)n$. 
Then \(b\to b_i\) is equivalent to $\alpha\to 1-i$.
For \(b\notin \{1,1-\frac1n,\ldots,\frac1n\}\), we have
\[
p_{n,(a,b)}(z)
=
\sum_{\ell=0}^n
\binom{n}{\ell}
\frac{(-an)_\ell}{(\alpha)_\ell}
z^{n-\ell}.
\]
Assume first that \((-an)_i\neq0\). 
Since
\[
(-1)^i\binom{n}{i}\coef{i}{n}\!\left(p_{n,(a,b)}\right)
=
\binom{n}{i}\frac{(-an)_i}{(\alpha)_i},
\]
by \eqref{eq:defb=1}, we obtain
\[
p_{n,(a,b_i)}(z)
=
\lim_{\alpha\to 1-i}
\sum_{\ell=0}^n
\frac{
\binom{n}{\ell}\frac{(-an)_\ell}{(\alpha)_\ell}
}{
\binom{n}{i}\frac{(-an)_i}{(\alpha)_i}
}
z^{n-\ell}.
\]
For \(\ell<i\), the numerator stays finite, while the denominator has a simple
pole. Hence those terms vanish in the limit. Therefore only the terms
\(\ell=i+k\), \(k=0,\ldots,n-i\), survive. For such \(\ell\), we have
\[
\frac{(-an)_{i+k}}{(-an)_i}=(i-an)_k\quad\mbox{and}\quad 
\frac{(\alpha)_i}{(\alpha)_{i+k}}
=
\frac{1}{(\alpha+i)_k}
\longrightarrow
\frac1{(1)_k}
=
\frac1{k!}.
\]
Consequently,
\begin{align*}
p_{n,(a,b_i)}(z)
&=
\sum_{k=0}^{n-i}
\frac{\binom{n}{i+k}}{\binom{n}{i}}
\frac{(i-an)_k}{k!}
z^{n-i-k}.
\end{align*}
Using
\[
\frac{\binom{n}{i+k}}{\binom{n}{i}}\frac1{k!}
=
\frac{\binom{n-i}{k}}{(i+1)_k},
\]
we get
\[
\coef{k}{n-i}\!\left(p_{n,(a,b_i)}\right)
=(-1)^k
\frac{\falling{i-an}{k}}{(i+1)_k},
\qquad k=0,\ldots,n-i.
\]
This coefficient formula is a polynomial expression in \(a\). Therefore the formula extends
uniquely to all \(a\in\C\) and we have $p_{n,(a,b_i)}\in\pols_{n-i}(\C)$ for all $a\in\C$. 

The hypergeometric representation of $p_{n,(a,b_i)}$ follows from the definition of ${}_2F_1$.

Finally, assume \(a>1-\frac1n\). For every \(b>1\), Proposition~\ref{thm:hypergeom_zeros} gives 
\[ 
\frac{
p_{n,(a,b)}(z)
}{
-n\coef{1}{n}(p_{n,(a,b)}).
}\in \tilde{\pols}_{\le n}(\R_{>0}). 
\] These polynomials converge point-wise to \(p_{n,(a,1)}\), so by Lemma~\ref{lem:conv_rel_pols}, \[ p_{n,(a,1)}\in \tilde{\pols}_{\le n}(\R_{\ge0}). \] But we already know that $p_{n,(a,1)}\in \pols_{n-1}(\C)$, thus $p_{n,(a,1)}\in \pols_{n-1}(\R_{\geq 0})$.
\end{proof}

\subsection{Weak convergence of Jacobi polynomials}\label{sec:WeakConvJacobi}
Yoshida in \cite{Yoshida} defined the free-beta prime distribution $f\mathcal{B}'_{a,b}$ as the free multiplicative convolution $\mu_a\boxtimes \mu_b^{-1}$, $a>0, b>1$, where $\mu_b^{-1}$ is the pushforward measure of $\mu_b$ by the mapping $x\mapsto x^{-1}$, and $\mu_\lambda$ is the Marchenko-Pastur distribution. In \cite{FreePerp}, it was observed that this definition makes sense also for $b=1$. 
For $a>0$ and $b\geq 1$, this measure is uniquely determined by its $S$-transform 
 \[
 S_{f\mathcal{B}'_{a,b}}(z)=\frac{b-1-z}{a+z},\quad z\in(-\min\{a,1\},0). 
 \]

The following result was already announced in \cite[Section 5.3]{MMP23}. 
 \begin{proposition}\label{pro:conv_pn}
For $a\geq 1$ and $b\geq 1$, 
\[
\meas{p_{n, (a, b)}} \weak f\mathcal{B}'_{a,b}.
\]
 \end{proposition}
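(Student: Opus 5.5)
Our plan is to use the finite $S$-transform criterion, Theorem~\ref{thm:main}. The explicit product form of the coefficients of $p_{n,(a,b)}$ makes $\strans{p_n}{n}$ computable in closed form. We treat $b>1$ and $b=1$ separately, since for $b=1$ the polynomial is $p_{n,(a,1)}\in\pols_{n-1}$, which has degree $n-1$ rather than $n$.

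\emph{Case $b>1$.} For $a\ge 1$ we have $a>1-\frac1n$, so Proposition~\ref{thm:hypergeom_zeros} gives $p_n:=p_{n,(a,b)}\in\pols_n(\R_{>0})$. In particular $r_n=0$, $p_n\neq z^n$, and all $\coef{k}{n}(p_n)>0$. From the definition,
\[
\strans{p_n}{n}\Bigl(-\frac kn\Bigr)=\frac{\coef{k-1}{n}(p_n)}{\coef{k}{n}(p_n)}
=-\frac{(b-1)n+k-1}{-an+k-1}=\frac{(b-1)n+k-1}{an-k+1}.
\]
If $k_n/n\to t\in(0,1)$, this tends to $\frac{b-1+t}{a-t}$, which equals $S_{f\mathcal{B}'_{a,b}}(-t)=\frac{b-1-z}{a+z}\big|_{z=-t}$. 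The required range is $t\in(0,1-\mu(\{0\}))$. Since $a\ge1$, the $S$-transform is given on $(-1,0)$, so we need $f\mathcal{B}'_{a,b}(\{0\})=0$. We expect this to follow from $a\ge 1$, because the Marchenko--Pastur law $\mu_a$ has no atom at $0$ for $a\ge1$ and neither does $\mu_b^{-1}$. In any case, the convergence holds for every $t\in(0,1)$, so it holds on the required range. Theorem~\ref{thm:main} then gives the claim.

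\emph{Case $b=1$.} Set $q_n:=p_{n,(a,1)}\in\pols_{n-1}(\R_{\ge0})$, using Proposition~\ref{prop:b_i} with $i=1$. Its coefficients are $\coef{k}{n-1}(q_n)=(-1)^k(1-an)_k/(2)_k$. For $m=n-1$,
\[
\strans{q_n}{m}\Bigl(-\frac km\Bigr)=\frac{k+1}{an-k}.
\]
These values are positive, and all coefficients are nonzero for $k\le n-1$, so $r_m=0$ (this uses $an-k\ge n-k>0$). If $k/m\to t$, the limit is $\frac{t}{a-t}=S_{f\mathcal{B}'_{a,1}}(-t)$. Applying Theorem~\ref{thm:main} along the degrees $m=n-1$ gives $\meas{q_n}\weak f\mathcal{B}'_{a,1}$.

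The main obstacle is checking the hypotheses of Theorem~\ref{thm:main}. First, the limit must not be $\delta_0$, which is clear here. Second, we must justify that $f\mathcal{B}'_{a,b}$ has no atom at $0$, or at least that the claimed $S$-transform formula holds on the relevant interval $(-1+\mu(\{0\}),0)$. If that identification is only given on $(-\min\{a,1\},0)$, it suffices that this interval equals $(-1,0)$ when $a\ge1$, which is exactly why the hypothesis $a\ge1$ appears. Finally, we must confirm that the equivalence in Theorem~\ref{thm:main} determines $\mu$ uniquely from the $S$-transform on that interval. This holds because the $S$-transform determines $\Psi_\mu$ near $0^-$ and hence $\mu$.
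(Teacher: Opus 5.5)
Your proposal is correct and follows essentially the same route as the paper. Like the paper, you compute the finite $S$-transform from the explicit coefficients, separately for $b>1$ and for $b=1$ (where the degree is $n-1$), pass to the limit $k_n/n\to t$, and invoke Theorem~\ref{thm:main}. Your explicit appeal to Proposition~\ref{thm:hypergeom_zeros} for nonnegative real roots, and your remarks on the atom at $0$ and the domain $(-\min\{a,1\},0)=(-1,0)$, are harmless additions that the paper leaves implicit.
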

\begin{proof}
Fix $a\geq 1$. First assume $b>1$, and set $p_n:=p_{n,(a,b)}$.
By definition,
\[
\coef{k}{n}(p_n)=(-1)^k\,\frac{(-an)_k}{((b-1)n)_k},
\qquad k=0,1,\dots,n,
\]
Since $b>1$, for all $k=0,\ldots,n$, the denominator is not zero.
Therefore $\coef{k}{n}(p_n)=0$ holds if and only if $\falling{-an}{k}=0$, which happens exactly when
$an\in\{0,1,\dots,n-1\}$, which is prohibited by $a\geq 1$. Hence the multiplicity of the root $0$ in $p_{n,(a,b)}$ equals $r_n=0$.

For every $k$ in the domain of the finite $S$-transform, i.e.\ $k=1,\dots,n$, we compute
\begin{align*}
\strans{p_n}{n}\!\left(-\frac{k}{n}\right)
&=\frac{\coef{k-1}{n}(p_n)}{\coef{k}{n}(p_n)}
= -\,\frac{\falling{-an}{k-1}}{\falling{-an}{k}}\,
   \frac{\falling{(b-1)n}{k}}{\falling{(b-1)n}{k-1}} 
= \frac{(b-1)n+k-1}{an-k+1}.
\end{align*}

Let $t\in(0,1)$.
Take any sequence $(k_n)$ with $k_n/n\to t$. Then,
\[
\lim_{n\to\infty}\strans{p_n}{n}\!\left(-\frac{k_n}{n}\right)
=\lim_{n\to\infty}\frac{(b-1)n+k_n-1}{an-k_n+1}
=\frac{b-1+t}{a-t}.
\]
On the other hand, for $z=-t$ in the domain,
\[
S_{f\mathcal{B}'_{a,b}}(-t)=\frac{b-1+t}{a-t}.
\]
Thus condition (2) of Theorem~\ref{thm:main} holds with $\mu=f\mathcal{B}'_{a,b}$, and
Theorem~\ref{thm:main} yields 
\[
\meas{p_n}\weak f\mathcal{B}'_{a,b}.
\]

If $b=1$, then $p_{n,(a,1)}\in\pols_{n-1}(\R_{\geq 0})$ and 
\[
\strans{p_n}{n-1}\!\left(-\frac{k}{n-1}\right)
=\frac{\coef{k-1}{n-1}(p_n)}{\coef{k}{n-1}(p_n)} = -\frac{(1-an)_{k-1}}{(1-an)_k}\frac{(k+1)!}{k!} = \frac{k+1}{a n-k}.
\]
Thus, if $k_n/(n-1)\to t$, then 
\[
\lim_{n\to\infty} \strans{p_n}{n-1}\!\left(-\frac{k_n}{n-1}\right)= \frac{t}{a-t} = S_{f\mathcal{B}'_{a,1}}(-t).
\]
Again, Theorem~\ref{thm:main} yields
\[
\meas{p_{n,(a,1)}}\weak f\mathcal B'_{a,1},
\]
which completes the proof. 
\end{proof}

\subsection{A fixed point equation}

Let $n\in\N$  and $a,b\in\C$ be such that
\begin{align}\label{eq:assum_minimal}
a+b\notin\left\{1,1-\frac1n,\ldots,\frac1n\right\}.
\end{align}
Consider the fixed-point equation
\begin{equation}\label{eq:fixedq}
q_n
=
p_{n,(a,a+b)}\boxtimes_n (e_{\boxtimes_n} \boxplus_n q_n),
\qquad q_n\in\tilde{\pols}_{\le n}(\C),
\end{equation}
where
\[
e_{\boxtimes_n}(z)=(z-1)^n.
\]
We now give an explicit solution to \eqref{eq:fixedq}. If \(b\notin\{1,1-\frac1n,\ldots,\frac1n\}\), then the solution has degree \(n\).
If \(b=b_i=1-\frac{i-1}{n}\) for some \(i\in\{1,\ldots,n\}\), then the solution has degree \(n-i\).

\begin{proposition}\label{prop:pff_shift_factorization}
Assume \eqref{eq:assum_minimal}.
The polynomial \(q_n=p_{n,(a,b)}\) solves
\eqref{eq:fixedq}.
\end{proposition}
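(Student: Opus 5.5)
The identity is linear in the normalized coefficients, so the plan is to compare coefficients $\coef{k}{n}$ on both sides of \eqref{eq:fixedq}. Both $\boxplus_n$ and $\boxtimes_n$ are defined on $\tilde{\pols}_{\le n}(\C)$ by coefficient formulas (Definition~\ref{def:finite_free_conv}), so \eqref{eq:fixedq} is equivalent to the system
\[
\coef{k}{n}(q_n)=\coef{k}{n}(p_{n,(a,a+b)})\sum_{j=0}^{k}\binom{k}{j}\coef{j}{n}(q_n),\qquad k=0,\dots,n .
\]
Here we use $\coef{j}{n}(e_{\boxtimes_n})=1$ for all $j$, which holds since $(z-1)^n=\sum_k(-1)^k\binom nk z^{n-k}$.

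First treat the generic case $b\notin\{1,1-\frac1n,\dots,\frac1n\}$ with $q_n=p_{n,(a,b)}$. Write $\alpha=(b-1)n$ and $\beta=(a+b-1)n$, so that $\beta=\alpha+an$. The required identity is
\[
\frac{(-an)_k}{(\alpha)_k}(-1)^k=(-1)^k\frac{(-an)_k}{(\beta)_k}\sum_{j=0}^k\binom kj(-1)^j\frac{(-an)_j}{(\alpha)_j}.
\]
The sum is ${}_2F_1(-k,-an;\alpha;1)$. By the Chu--Vandermonde identity it equals $\frac{(\alpha+an)_k}{(\alpha)_k}=\frac{(\beta)_k}{(\alpha)_k}$. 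Multiplying by $\frac{(-an)_k}{(\beta)_k}$ gives exactly $\frac{(-an)_k}{(\alpha)_k}$, which proves the claim. Assumption \eqref{eq:assum_minimal} guarantees that $(\beta)_k\ne0$, so $p_{n,(a,a+b)}$ is well defined.

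Next treat the degenerate case $b=b_i$. In view of \eqref{eq:defb=1}, the natural solution is the limit of the generic solutions, up to normalization. Since \eqref{eq:fixedq} is linear in $q_n$ (the map $q\mapsto p\boxtimes_n(e_{\boxtimes_n}\boxplus_n q)$ is affine, not linear), the normalization needs care. I would instead verify the coefficient system directly, using the formula of Proposition~\ref{prop:b_i} for $p_{n,(a,b_i)}$ viewed in $\tilde\pols_{\le n}$. This amounts to checking that $\coef{j}{n}$ vanishes for $j<i$ and equals a suitable normalized ratio for $j\ge i$.

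The main obstacle is precisely this degenerate case, and in particular the interpretation of $\coef{\cdot}{n}$ for a polynomial of degree $n-i$ together with its normalization. The fixed-point equation is affine, so the rescaled limit in \eqref{eq:defb=1} satisfies it only if the constant term behaves correctly. I would first try a direct Chu--Vandermonde computation. For $k<i$ both sides should vanish, because $\coef{k}{n}(p_{n,(a,a+b)})\cdot\sum_{j\le k}(\cdots)$ must be zero. If that fails, I would fall back on the continuity argument in $b$, multiplying the equation by $(\alpha)_i$ before letting $\alpha\to1-i$. Finally, I would extend to all $a$ by polynomiality in $a$.
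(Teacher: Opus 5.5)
Your generic case $b\notin\{1,1-\frac1n,\dots,\frac1n\}$ is correct and is essentially the paper's argument. The paper gets $\coef{k}{n}(e_{\boxtimes_n}\boxplus_n q_n)=\coef{k}{n}(q_n(\cdot-1))=((a+b-1)n)_k/((b-1)n)_k$ from Pfaff's transformation. You get the same ratio $(\beta)_k/(\alpha)_k$ from Chu--Vandermonde, which for terminating series is the same identity.

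\textbf{The gap: the case $b=b_i$.} This case is part of the statement, but you only list strategies for it, and the obstacle you name is not real. The map $q\mapsto p_{n,(a,a+b)}\boxtimes_n(e_{\boxtimes_n}\boxplus_n q)$ is \emph{linear} on $\tilde\pols_{\le n}(\C)$, not affine. Indeed, $\boxplus_n$ and $\boxtimes_n$ are bilinear in the full coefficient vectors $(\coef{0}{n},\dots,\coef{n}{n})$, and $e_{\boxtimes_n}\boxplus_n q=q(\cdot-1)$ for every $q$ of degree at most $n$. The ``constant term'' you worry about appears only if one freezes $\coef{0}{n}(q)=1$, which is exactly the constraint the normalization \eqref{eq:defb=1} discards.

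\textbf{Closing it by continuity (the paper's route).} By linearity, $r_b:=p_{n,(a,b)}/\bigl((-1)^i\binom{n}{i}\coef{i}{n}(p_{n,(a,b)})\bigr)$ solves \eqref{eq:fixedq} for $b\neq b_i$ close to $b_i$. As $b\to b_i$:
\begin{itemize}
\item the coefficients of $r_b$ converge to those of $p_{n,(a,b_i)}$;
\item those of $p_{n,(a,a+b)}$ converge to those of $p_{n,(a,a+b_i)}$, since $((a+b-1)n)_k$ stays away from $0$ by \eqref{eq:assum_minimal};
\item the coefficient formulas for $\boxplus_n$ and $\boxtimes_n$ are continuous.
\end{itemize}
So your fallback works at once: multiplying by $(\alpha)_i$ is the same normalization up to the nonzero constant $\binom{n}{i}(-an)_i$, and you then let $\alpha\to1-i$. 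This is precisely the paper's proof.

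\textbf{Closing it directly.} Your Chu--Vandermonde route also works. In $\tilde\pols_{\le n}$ one has $\coef{k}{n}(p_{n,(a,b_i)})=0$ for $k<i$, so both sides vanish there. For $k\ge i$, $\coef{k}{n}(p_{n,(a,b_i)})=(-1)^k(i-an)_{k-i}/\bigl(\binom{n}{i}(k-i)!\bigr)$, hence
\[
\sum_{j=0}^k\binom{k}{j}\coef{j}{n}(p_{n,(a,b_i)})=\frac{(-1)^i\binom{k}{i}}{\binom{n}{i}}\,{}_2F_1(i-k,i-an;i+1;1)=\frac{(-1)^i\binom{k}{i}\,(1+an)_{k-i}}{\binom{n}{i}\,(i+1)_{k-i}} .
\]
Also $(-an)_k/(an+1-i)_k=(-1)^i(i-an)_{k-i}/(an+1)_{k-i}$. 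Multiplying these, together with the factor $(-1)^k$ from $\coef{k}{n}(p_{n,(a,a+b_i)})$, gives exactly the required coefficient, because $\binom{k}{i}/(i+1)_{k-i}=1/(k-i)!$.

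Finally, no extension by polynomiality in $a$ is needed. For $b=b_i$, \eqref{eq:assum_minimal} already forces $(-an)_i=(-1)^i(an+1-i)_i\neq0$.
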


\begin{proof}
We first treat the case
\(
b\notin\{1,1-\frac1n,\ldots,\frac1n\}.
\)
Set
\[
q_n:=p_{n,(a,b)}.
\]
The shift property of the finite free additive
convolution gives
\[
e_{\boxtimes_n}\boxplus_n q_n = q_n(\,\cdot-1).
\]
By Proposition~\ref{thm:hypergeom_zeros},
\[
q_n(z-1)
=
(z-1)^n\,{}_2F_1\!\left(-n,-an;(b-1)n;-\frac{1}{z-1}\right).
\]
Applying Pfaff's transformation 
\[
{}_2F_1(A,B;C;u)
=
(1-u)^{-A}\,
{}_2F_1\!\left(A,C-B;C;\frac{u}{u-1}\right)
\]
with
\[
A=-n,\qquad B=-an,\qquad C=(b-1)n,\qquad u=-\frac{1}{z-1},
\]
which is valid here without any restriction on \(u\) since the series terminates, we obtain
\[
1-u=\frac{z}{z-1},
\qquad
\frac{u}{u-1}=\frac{1}{z},
\]
and therefore
\[
q_n(z-1)
=
z^n\,{}_2F_1\!\left(-n,(a+b-1)n;(b-1)n;z^{-1}\right).
\]
Expanding the terminating hypergeometric series yields
\[
q_n(z-1)
=
\sum_{k=0}^n
(-1)^k\binom{n}{k}
\frac{((a+b-1)n)_k}{((b-1)n)_k}
\,z^{n-k},
\]
hence
\[
\coef{k}{n}\!\bigl(q_n(\,\cdot-1)\bigr)
=
\frac{((a+b-1)n)_k}{((b-1)n)_k},
\qquad k=0,\ldots,n.
\]
On the other hand,
\[
\coef{k}{n}\!\left(p_{n,(a,a+b)}\right)
=
(-1)^k\,\frac{(-an)_k}{((a+b-1)n)_k}.
\]
Therefore, by the definition of \(\boxtimes_n\),
\begin{align*}
\coef{k}{n}\!\left(p_{n,(a,a+b)}\boxtimes_n q_n(\,\cdot-1)\right)
&=
\coef{k}{n}\!\left(p_{n,(a,a+b)}\right)\,
\coef{k}{n}\!\bigl(q_n(\,\cdot-1)\bigr)\\
&=
\left[(-1)^k\,\frac{(-an)_k}{((a+b-1)n)_k}\right]
\left[\frac{((a+b-1)n)_k}{((b-1)n)_k}\right]\\
&=
(-1)^k\,\frac{(-an)_k}{((b-1)n)_k}
=
\coef{k}{n}\!\left(p_{n,(a,b)}\right).
\end{align*}
Thus
\begin{align}\label{eq:psoln}
p_{n,(a,b)}
=
p_{n,(a,a+b)}\boxtimes_n p_{n,(a,b)}(\,\cdot-1) = p_{n,(a,a+b)}\boxtimes_n (e_{\boxtimes_n}\boxplus_n p_{n,(a,b)}),
\end{align}
which is exactly \eqref{eq:fixedq}.

It remains to consider the case \(b=b_i\) for some \(i\in\{1,\ldots,n\}\).
By \eqref{eq:assum_minimal}, we have \((-an)_i\neq0\). Hence the following
normalization is well-defined for \(b\) close to \(b_i\):
\[
r_b(z):=
\frac{
p_{n,(a,b)}(z)
}{
(-1)^i\binom{n}{i}\coef{i}{n}(p_{n,(a,b)})
}.
\]
For \(b\notin\{1,1-\frac1n,\ldots,\frac1n\}\), by \eqref{eq:psoln}, we have
\[
r_b
=
p_{n,(a,a+b)}\boxtimes_n (e_{\boxtimes_n}\boxplus_n r_b).
\]
Letting \(b\to b_i\), we have, by the definition,
$r_b\to p_{n,(a,b_i)}$.
Moreover, by \eqref{eq:assum_minimal},
$p_{n,(a,a+b)}\to p_{n,(a,a+b_i)}$.
By continuity of \(\boxtimes_n\), we obtain
\[
p_{n,(a,b_i)}
=
p_{n,(a,a+b_i)}
\boxtimes_n (e_{\boxtimes_n}\boxplus_n
p_{n,(a,b_i)}).
\]
Thus \(q_n=p_{n,(a,b_i)}\) solves \eqref{eq:fixedq}.
\end{proof}

\subsection{Weak convergence of a solution}
We show that the limit of $p_{n,(a,b)}$ as $n\to\infty$ connects to a class of free perpetuities studied in \cite{FreePerp}. Our general results in Section \ref{sec:approx} demonstrate that this relationship is, in fact, far more general.

\begin{proposition}
Fix \(a\geq 1\) and \(b\ge 1\). Then by Proposition \ref{prop:pff_shift_factorization}, $q_n=p_{n,(a,b)}$ satisfies \eqref{eq:fixedq} and by Proposition \ref{pro:conv_pn},
\begin{itemize}
    \item[(i)] $\meas{p_{n,(a,a+b)}}\weak  f\mathcal{B}'_{a,a+b}=:\mu_{\A}$,
    \item[(ii)] $\meas{q_n}\weak f\mathcal{B}'_{a,b}=:\mu_{\X}$.
\end{itemize}
Moreover, by Corollary \ref{cor:iterated_approximation} applied to  \eqref{eq:fixedq}, we obtain 
\begin{align}\label{eq:free_perp_AXA}
\mu_\X = \mu_{\A}\boxtimes (\delta_1\boxplus \mu_{\X}).
\end{align}
\end{proposition}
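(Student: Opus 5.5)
The plan is to apply Corollary~\ref{cor:iterated_approximation} with $\kappa=2$ to the fixed-point identity \eqref{eq:fixedq}, after checking its hypotheses. Set $p_n^{[0]}:=q_n$, $p_n^{[1]}:=e_{\boxtimes_n}$ combined by $\boxplus_n$, and $p_n^{[2]}:=p_{n,(a,a+b)}$ combined by $\boxtimes_n$ (so $\mathcal I=\{2\}$).

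\textbf{Hypotheses.} Since $a\ge1$ and $b\ge1$, we have $a+b\ge 2$, so \eqref{eq:assum_minimal} holds and Proposition~\ref{prop:pff_shift_factorization} applies. Proposition~\ref{thm:hypergeom_zeros}, with $a>1-\frac1n$ and $a+b>1$, gives $p_{n,(a,a+b)}\in\pols_n(\R_{>0})$. Clearly $e_{\boxtimes_n}=(z-1)^n\in\pols_n(\R_{\ge0})$, and $\meas{e_{\boxtimes_n}}=\delta_1$. Items (i) and (ii) are Proposition~\ref{pro:conv_pn} applied with parameters $(a,a+b)$ and $(a,b)$ respectively.

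\textbf{Case $b>1$.} Here $q_n=p_{n,(a,b)}\in\pols_n(\R_{>0})$ by Proposition~\ref{thm:hypergeom_zeros}. Corollary~\ref{cor:iterated_approximation} then gives
\[
\meas{p_{n,(a,a+b)}\boxtimes_n(e_{\boxtimes_n}\boxplus_n q_n)}\weak \mu_\A\boxtimes(\delta_1\boxplus\mu_\X).
\]
By \eqref{eq:fixedq}, the left-hand polynomial is $q_n$ itself, whose root distributions converge to $\mu_\X$ by (ii). Uniqueness of weak limits yields \eqref{eq:free_perp_AXA}.

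\textbf{Case $b=1$.} This is the main obstacle, because $b=b_1$ and so $q_n=p_{n,(a,1)}$ has degree $n-1$; Corollary~\ref{cor:iterated_approximation} requires all polynomials to have degree $n$. I would reduce to degree $n-1$ using Proposition~\ref{prop:conv_diff_deg}, which is stated for monic $p$ but rests on coefficient identities and extends to non-monic $p$ by linearity. It gives
\[
e_{\boxtimes_n}\boxplus_n q_n=\partial_n e_{\boxtimes_n}\boxplus_{n-1}q_n=(z-1)^{n-1}\boxplus_{n-1}q_n=:r_n\in\pols_{n-1}(\C).
\]
Similarly $p_{n,(a,a+b)}\boxtimes_n r_n=D_np_{n,(a,a+b)}\boxtimes_{n-1}r_n$.

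By Lemma~\ref{lemma:normalized_derivatives_coefficients}, $D_np_{n,(a,a+b)}$ has leading coefficient $c_n:=\coef{1}{n}(p_{n,(a,a+b)})>0$. Set $\tilde p_n:=c_n^{-1}D_np_{n,(a,a+b)}$. Then $\tilde p_n$ is monic, and by Proposition~\ref{prop:derivatives_preserve_nonnegative_roots} (1-monic after rescaling) $\tilde p_n\in\pols_{n-1}(\R_{\ge0})$. Since $\tilde p_n$ has the same roots as $D_np_{n,(a,a+b)}$, \eqref{eq:fixedq} reads
\[
c_n^{-1}q_n=\tilde p_n\boxtimes_{n-1}\bigl((z-1)^{n-1}\boxplus_{n-1}q_n\bigr).
\]
Both sides have the same root set, so
\[
\meas{q_n}=\meas{\tilde p_n\boxtimes_{n-1}\bigl((z-1)^{n-1}\boxplus_{n-1}q_n\bigr)}.
\]

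\textbf{Limit of $\tilde p_n$.} It remains to show $\meas{\tilde p_n}\weak\mu_\A$. Lemma~\ref{lemma:normalized_derivatives_coefficients} gives $\coef{k}{n-1}(\tilde p_n)=\coef{k+1}{n}(p_{n,(a,a+b)})/c_n$. Hence
\[
S^{(n-1)}_{\tilde p_n}\!\left(-\tfrac{k}{n-1}\right)=S^{(n)}_{p_{n,(a,a+b)}}\!\left(-\tfrac{k+1}{n}\right)=\frac{(a+b-1)n+k}{an-k}.
\]
This tends to $S_{\mu_\A}(-t)$ whenever $k_n/(n-1)\to t$, so Theorem~\ref{thm:main} gives $\meas{\tilde p_n}\weak\mu_\A$.

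\textbf{Conclusion for $b=1$.} Corollary~\ref{cor:iterated_approximation}, applied along degree $m=n-1$ with $p^{[2]}_{m}=\tilde p_n$, $p^{[1]}_m=(z-1)^m$ and $p^{[0]}_m=q_n$, gives convergence of the right-hand side to $\mu_\A\boxtimes(\delta_1\boxplus\mu_\X)$. Combined with (ii), uniqueness of weak limits yields \eqref{eq:free_perp_AXA}.

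As a fallback for $b=1$, one could instead pass to the limit $b\downarrow1$ in the $b>1$ identity. The $S$-transforms of $f\mathcal B'_{a,b}$ and $f\mathcal B'_{a,a+b}$ depend continuously on $b$, and $\boxplus$ and $\boxtimes$ are weakly continuous, so the identity would pass to the limit.
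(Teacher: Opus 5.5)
Your proof is correct. For $b>1$ it is exactly the paper's argument: the paper invokes Corollary~\ref{cor:iterated_approximation} on \eqref{eq:fixedq} with $\mathcal I=\{2\}$ and then uses uniqueness of weak limits.

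You differ from the paper at $b=1$. The paper's one-line invocation glosses over the fact that $q_n=p_{n,(a,1)}$ has degree $n-1$, while $e_{\boxtimes_n}$ and $p_{n,(a,a+1)}$ have degree $n$. Corollary~\ref{cor:iterated_approximation} needs all inputs in $\pols_m(\R_{\ge0})$ for a common $m$, so it does not literally apply there. Your reduction closes this gap. Via Proposition~\ref{prop:conv_diff_deg} you rewrite the equation as $q_n=D_np_{n,(a,a+1)}\boxtimes_{n-1}\bigl((z-1)^{n-1}\boxplus_{n-1}q_n\bigr)$, and you check with finite $S$-transforms that $\meas{D_np_{n,(a,a+1)}}\weak\mu_\A$. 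This is the same degree-reduction mechanism the paper itself uses later, in Subsection~\ref{sec:reduct} and in the example of Section~\ref{sec:ffp}. What your version buys is a genuinely complete argument at the critical value $b=1$.

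Two simplifications are available. First, at $b=1$ one has $c_n=\coef{1}{n}(p_{n,(a,a+1)})=a/(a+b-1)=1$ exactly; this is the criticality $\E[A]=1$. So $p_{n,(a,a+1)}$ is already $1$-monic, $\tilde p_n=D_np_{n,(a,a+1)}$, and Proposition~\ref{prop:derivatives_preserve_nonnegative_roots} applies verbatim with no rescaling. Second, you do not need to extend Proposition~\ref{prop:conv_diff_deg} to non-monic $p$. Both $e_{\boxtimes_n}$ and $p_{n,(a,a+1)}$ are monic of degree $n$, and the second arguments $q_n$ and $r_n$ lie in $\pols_{n-1}(\C)$, so the proposition applies as stated.

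The fallback via $b\downarrow 1$ is unnecessary. As sketched it is also incomplete: it would need a continuity theorem turning convergence of $S$-transforms into weak convergence, or a direct argument through $f\mathcal{B}'_{a,b}=\mu_a\boxtimes\mu_b^{-1}$, and you do not supply either.
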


The fixed-point equations of the form \eqref{eq:free_perp_AXA} were studied in \cite{FreePerp}. By Remark~\ref{rem:free}, the right-hand side of \eqref{eq:free_perp_AXA} is the distribution of $\A^{1/2}(\X+1)\A^{1/2}$,
where \(\A\sim f\mathcal{B}'_{a,a+b}\) and \(\X\sim f\mathcal{B}'_{a,b}\) are free. Hence \eqref{eq:free_perp_AXA} can be written as
\[
\X \stackrel{d}{=} \A^{1/2}\X \A^{1/2} + \A,
\qquad \X \text{ and } \A \text{ are free}.
\]
This is exactly the free perpetuity considered in \cite[Section 1.5]{FreePerp}.

The behavior of the solution changes qualitatively at \(b=1\). 
If \(b>1\), then \(\mu_\X\) has compact support. At the boundary value \(b=1\), however, the behavior changes qualitatively: \(\mu_\X\) becomes heavy-tailed and
\[
\lim_{t\to\infty} t^{1/2}\,\mu_{\X}\bigl((t,\infty)\bigr)=\frac{2\sqrt a}{\pi}.
\]
Thus the tail decays as \(t^{-1/2}\). This exponent is not specific to the present example: it is the universal critical exponent for free perpetuities in the finite-variance regime. More precisely, \cite[Theorem 1.5]{FreePerp} shows that whenever
\[
\X \stackrel{d}{=} \A^{1/2}\X\A^{1/2}+\B,
\qquad m_1(\mu_\A)=1,
\]
with \(\A\) non-Dirac and of finite second moment, the law of \(\X\) has a \(t^{-1/2}\)-tail. In our example this critical regime is realized precisely at \(b=1\).

In the finite model we already see the same threshold, since
\[
m_1\left(\meas{p_{n,(a,a+b)}}\right)
=
\coef{1}{n}\!\left(p_{n,(a,a+b)}\right)
=
\frac{a}{a+b-1},
\]
so \(m_1(\mu_\A)=1\) exactly when \(b=1\).

The finite model also reflects the critical transition on the level of roots. For \(b>1\), the solution \(q_n=p_{n,(a,b)}\) has degree \(n\). At \(b=1\), after the renormalization
\[
\frac{
p_{n,(a,b)}(z)
}{
-n\coef{1}{n}(p_{n,(a,b)}).
}\to p_{n,(a,1)},
\]
the degree drops to \(n-1\). Equivalently, as \(b\downarrow1\), one root of \(q_n\) escapes to \(+\infty\), while the remaining \(n-1\) roots converge to the roots of \(p_{n,(a,1)}\). In terms of empirical root distributions, this means that \(\meas{q_n}\) loses tightness at the critical point through one escaping atom of mass \(1/n\). Thus, although each \(\meas{q_n}\) is discrete and compactly supported and therefore has no genuine tail behavior, the finite fixed-point problem already detects the same critical threshold \(m_1(\mu_\A)=1\) that in the limit produces the universal \(t^{-1/2}\) tail.

\section{The U-transform}\label{sec:U}

In this section, we recall the definition and basic properties of the $U$-transform, introduced in \cite{Mar21}, and then extend it to our setting. The 
$U$-transform provides an appealing probabilistic representation of finite free convolutions, see Lemma \ref{lem:U_transf_finite_free_conv} below. As a consequence, fixed-point equations for polynomials such as \eqref{eq:fixedq} admit a probabilistic reformulation.

Let $(\Omega, \mathcal{F},\P)$ be a probability space sufficiently rich to support any finite discrete distribution.  All random variables will implicitly be assumed to be defined on $(\Omega, \mathcal{F})$. 

\begin{definition}
Let $p\in\pols_n(\C)$ with $n\ge1$. Any complex-valued random variable $U$ with $\E[|U|^n]<\infty$ satisfying $p(z) = \E[(z-U)^n]$ will be called a $U$-transform of $p$. In this case we write $p= p_U$. 
\end{definition}

\begin{corollary}\label{cor:E[U^m]}
A $U$-transform of $p\in\pols_n(\C)$ is any complex random variable $U$ satisfying $\E[|U|^n]<\infty$ such that   $\E[U^m]=\coef{m}{n}(p_U)$ for all $m\in\{1,\dots,n\}$. In particular, $\E[U]=m_1(\meas{p})$.
\end{corollary}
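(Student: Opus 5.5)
The statement is essentially a coefficient comparison, so I will expand $\E[(z-U)^n]$ with the binomial theorem and match it against the normalized form of Definition~\ref{e_k}.

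First, the condition $\E[|U|^n]<\infty$ guarantees $\E[|U|^m]<\infty$ for every $m\le n$. This follows from Hölder's (or Lyapunov's) inequality, or from the elementary bound $|U|^m\le 1+|U|^n$. Consequently, each term of the expansion below is integrable, and linearity of expectation applies to the finite sum:
\[
\E[(z-U)^n]=\sum_{m=0}^n \binom{n}{m}(-1)^m \E[U^m]\, z^{n-m}.
\]
This is a monic polynomial of degree $n$. Its normalized coefficients in the representation of Definition~\ref{e_k} are exactly $\coef{m}{n}=\E[U^m]$, with $\coef{0}{n}=1$ automatically.

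Second, by the uniqueness of that representation, $\E[(z-U)^n]=p(z)$ holds as polynomials if and only if $\E[U^m]=\coef{m}{n}(p)$ for all $m\in\{1,\dots,n\}$. This proves both directions of the characterization.

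Third, for the particular claim I take $m=1$. By Definition~\ref{e_k}, $\coef{1}{n}(p)=\frac1n\sum_i\lambda_i(p)$, and this is precisely $\int x\,\meas{p}(\dx)=m_1(\meas{p})$.

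None of these steps is a real obstacle. The only point needing care is the integrability of the lower moments in the first step, which justifies exchanging expectation with the binomial sum.
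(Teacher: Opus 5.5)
Your proposal is correct and matches the paper's argument: the paper gives no separate proof and treats the corollary as immediate from the definition, which amounts to expanding $\E[(z-U)^n]$ binomially and comparing with the normalized coefficients of Definition~\ref{e_k}, exactly as you do. Your remark that $\E[|U|^n]<\infty$ makes the lower moments integrable is a small detail the paper leaves unstated.
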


\begin{remark}\label{rem:MK}
In general, the law of a $U$-transform is not unique. However, there exists a unique multiset $T$ of cardinality $n$ such that, if $U$ is uniformly distributed on $T$, then
\[
p(z)=\E[(z-U)^n];
\]
see \cite[Lemma 3.1]{Mar21}. In particular, the $U$-transform exists for all $n\ge 1$ and all polynomials $p\in\pols_n(\C)$.

If $p\in\pols_n(\R)$ and $S$ is its multiset of roots, then for a random variable $U$ uniformly distributed on a multiset $T$ of cardinality $n$, the identity
$p(z)=\prod_{s\in S}(z-s)=\E[(z-U)^n]$ 
is equivalent, for sufficiently large $x$, to
\[
\int_{\R} \log\big((x-s)^n\big)\,\mu_S({\rm d}s)
=
\log \int_{\C} (x-t)^n\,\mu_T(\dt),
\]
where for a multiset $V=\mset{\lambda_1,\dots,\lambda_n}$ we denote its empirical measure by
\[
\mu_V:=\frac1n\sum_{i=1}^n\delta_{\lambda_i}.
\]
\end{remark}

An essential property of the $U$-transform is that the additive and multiplicative convolutions of polynomials can equivalently be defined in terms of their $U$-transforms. 

\begin{proposition}
    Let $p\in\pols_n(\C)$ and $U$ be any $U$-transform of $p$. Let $d\in\{0,\dots,n-1\}$. Then 
\begin{itemize}
    \item[(i)] a $U$-transform of $\partial_n^dp$ is any complex-valued random variable $U'$ satisfying $\E[|U'|^{n-d}]<\infty$ and $\E[(U')^m]=\E[U^m]$ for all $m\in\{1,\dots,n-d\}$.
    \item[(ii)] Assume additionally that $p$ is $d$-monic. A $U$-transform of $D_n^dp$ is any complex-valued random variable $U'$ satisfying $\E[|U'|^{n-d}]<\infty$ and $\E[(U')^m]=\E[U^{m+d}]$ for all $m\in\{1,\dots,n-d\}$.
\end{itemize}
\end{proposition}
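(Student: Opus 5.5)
The statement reduces to a coefficient comparison, combining Corollary~\ref{cor:E[U^m]} with Lemma~\ref{lemma:normalized_derivatives_coefficients}. By Corollary~\ref{cor:E[U^m]}, a random variable $U'$ with $\E[|U'|^{n-d}]<\infty$ is a $U$-transform of a polynomial $r\in\pols_{n-d}(\C)$ if and only if $\E[(U')^m]=\coef{m}{n-d}(r)$ for all $m\in\{1,\dots,n-d\}$. So the plan is to identify the normalized coefficients of $\partial_n^d p$ and $D_n^d p$ with respect to degree $n-d$, and then read them off as moments of $U$.

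For (i), I would first note that $\partial_n^d p\in\pols_{n-d}(\C)$ by Corollary~\ref{cor:monic_deriv}, so the notion of a $U$-transform of $\partial_n^dp$ makes sense. Lemma~\ref{lemma:normalized_derivatives_coefficients} gives
\[
\partial_n^d p(z)=\sum_{k=0}^{n-d}(-1)^k\binom{n-d}{k}\coef{k}{n}(p)z^{n-d-k}.
\]
By uniqueness of the representation in Definition~\ref{e_k} (applied with $n-d$ in place of $n$), this means $\coef{k}{n-d}(\partial_n^dp)=\coef{k}{n}(p)$. Since $U$ is a $U$-transform of $p$, Corollary~\ref{cor:E[U^m]} yields $\coef{k}{n}(p)=\E[U^k]$. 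Hence a variable $U'$ with finite $(n-d)$-th absolute moment is a $U$-transform of $\partial_n^dp$ if and only if $\E[(U')^m]=\E[U^m]$ for $m=1,\dots,n-d$. This is exactly the claim; it is an equivalence, which is what "a $U$-transform is any random variable satisfying" means.

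For (ii), the argument is the same, but the $d$-monic hypothesis is needed. By Corollary~\ref{cor:monic_deriv}, $D_n^dp\in\pols_{n-d}(\C)$ precisely because $\coef{d}{n}(p)=1$; without this, $D_n^dp$ would not be monic and would have no $U$-transform. Lemma~\ref{lemma:normalized_derivatives_coefficients} gives $\coef{k}{n-d}(D_n^dp)=\coef{k+d}{n}(p)$. For $k\ge1$ we have $k+d\le n$, so this equals $\E[U^{k+d}]$ by Corollary~\ref{cor:E[U^m]}. For $k=0$ it equals $\coef{d}{n}(p)=1$, which is consistent with monicity. The characterization then follows as in (i).

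There is no real obstacle here. The only points needing care are the following:
\begin{itemize}
\item The uniqueness of the coefficients in Definition~\ref{e_k} must be invoked at degree $n-d$.
\item The case $d=0$ is trivial.
\item In (ii), the $d$-monic condition is what makes the $k=0$ coefficient equal to $1$. This is consistent with the fact that $\E[(U')^0]=1$ while $\E[U^d]=\coef{d}{n}(p)$.
\end{itemize}
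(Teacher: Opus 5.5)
Your proposal is correct and follows the paper's argument: the paper's proof is the one-line remark that the statement follows from Lemma~\ref{lemma:normalized_derivatives_coefficients} and Corollary~\ref{cor:E[U^m]}. You spell out the same coefficient comparison in detail, including the role of $d$-monicity in making $D_n^d p$ monic in part (ii).
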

\begin{proof}
    Follows from Lemma \ref{lemma:normalized_derivatives_coefficients} and Corollary \ref{cor:E[U^m]}.
\end{proof}

\begin{lemma}\label{lem:U_transf_finite_free_conv}
    Let $p\in\pols_{n}(\C)$, $q\in\pols_{n-d}(\C)$, $d\in\{0,\ldots,n-1\}$. Let $S$ be any $U$-transform of $q$. 
\begin{itemize}
    \item[(i)] Let $T$ be any $U$-transform of $\partial_n^d p$, independent of $S$. Then
    \[
    ( p \boxplus_{n} q )(z) 
= \E[(z - (S+T))^{n-d}].
    \]
    \item[(ii)] Assume $p$ is $d$-monic and let $T$ be any $U$-transform of $D_n^d p$, independent of $S$. Then
    \[
    ( p \boxtimes_{n} q )(z) 
= \E[(z - S T)^{n-d}].
    \]
\end{itemize}
\end{lemma}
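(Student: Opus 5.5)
Both identities reduce to the equal-degree case via Proposition~\ref{prop:conv_diff_deg}, followed by a direct comparison of normalized coefficients. By that proposition,
\[
p\boxplus_n q=\partial_n^d p\boxplus_{n-d} q,\qquad p\boxtimes_n q=D_n^d p\boxtimes_{n-d} q .
\]
In case (ii), Corollary~\ref{cor:monic_deriv} guarantees that $D_n^d p\in\pols_{n-d}(\C)$, because $p$ is $d$-monic. Hence $D_n^d p$ admits $U$-transforms, and likewise $\partial_n^d p\in\pols_{n-d}(\C)$ in case (i). So it suffices to prove the statement for $d=0$, with $n$ replaced by $m:=n-d$ and $p$ replaced by $r\in\pols_m(\C)$, where $r=\partial_n^dp$ or $r=D_n^dp$ and $T$ is a $U$-transform of $r$.

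For $d=0$, I would expand the right-hand sides and read off coefficients in the basis of Definition~\ref{e_k}. First, integrability: $\E|S+T|^m<\infty$ and $\E|ST|^m=\E|S|^m\,\E|T|^m<\infty$, using independence, the bound $|S+T|^m\le 2^{m-1}(|S|^m+|T|^m)$, and the assumption $\E|S|^m,\E|T|^m<\infty$. Therefore
\[
\E[(z-W)^m]=\sum_{k=0}^m(-1)^k\binom{m}{k}\E[W^k]\,z^{m-k},
\]
so $\coef{k}{m}$ of the right-hand side equals $\E[W^k]$.

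For $W=S+T$, the binomial theorem together with independence gives
\[
\E[(S+T)^k]=\sum_{j}\binom kj\E[T^j]\E[S^{k-j}].
\]
By Corollary~\ref{cor:E[U^m]}, $\E[T^j]=\coef{j}{m}(r)$ and $\E[S^{k-j}]=\coef{k-j}{m}(q)$, with the convention $\E[\cdot^0]=1=\coef{0}{m}$. This is exactly the coefficient formula for $r\boxplus_m q$ in Definition~\ref{def:finite_free_conv}.

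For $W=ST$, independence gives $\E[(ST)^k]=\E[S^k]\E[T^k]=\coef{k}{m}(q)\,\coef{k}{m}(r)$, which is the defining formula for $\boxtimes_m$.

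Since both sides are polynomials in $\tilde\pols_{\le m}(\C)$ with identical normalized coefficients, they coincide. Combining this with the reduction in the first paragraph proves (i) and (ii). The only potentially delicate point is the bookkeeping in the reduction step, namely making sure that the $U$-transform of $\partial_n^dp$ or $D_n^dp$ is taken in degree $n-d$ so that Corollary~\ref{cor:E[U^m]} applies with the right index. Everything else is routine.
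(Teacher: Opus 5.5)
Your proof is correct and takes essentially the paper's route: you reduce to the equal-degree case via Proposition~\ref{prop:conv_diff_deg}, using Corollary~\ref{cor:monic_deriv} to ensure that $\partial_n^d p$ and $D_n^d p$ are monic of degree $n-d$ and therefore admit $U$-transforms. The only difference is that the paper cites \cite[Lemma~3.4]{Mar21} for the equal-degree identity, whereas you verify it directly by comparing normalized coefficients using Corollary~\ref{cor:E[U^m]} and independence; that computation is exactly the content of the cited lemma.
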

\begin{proof}
    Follows from \cite[Lemma 3.4]{Mar21} and Proposition \ref{prop:conv_diff_deg}.
\end{proof}

\begin{remark}
The conclusion of Lemma \ref{lem:U_transf_finite_free_conv} does not require full independence of the \(U\)-transforms \(S\) and \(T\).

In part \((i)\), it is sufficient to assume that their mixed moments factorize up to total order \(n-d\), namely,
\begin{align}\label{eq:add}
\E[S^iT^j]=\E[S^i]\E[T^j], \qquad i,j\ge 0,\quad i+j\le n-d.
\end{align}

In part \((ii)\), it is sufficient to assume that
\begin{align}\label{eq:mult}
\E[S^iT^i]=\E[S^i]\E[T^i], \qquad 0\le i\le n-d.
\end{align}

Indeed, part \((i)\) only involves moments of \(S+T\) up to order \(n-d\), while part \((ii)\) only involves moments of \(ST\) up to order \(n-d\).
\end{remark}

\subsection{\texorpdfstring{$n$-probability space}{n-probability space}}
Motivated by the \(U\)-transform representation of finite free convolutions, we now introduce a finite-order probabilistic formalism adapted to the parameter \(n\). Since the objects studied in this work are determined through identities involving moments only up to order \(n\), it is natural to replace the usual notions of distribution and independence by their truncated analogues. We therefore define \(n\)-distributions, \(n\)-independence, and convergence in \(n\)-distribution, which will serve as the probabilistic framework for the sequel.

All random variables considered in this section are complex-valued and have finite moments up to order \(n\).

For a random vector \(\underline{X}=(X_1,\dots,X_k)\in\C^k\), we define its \(n\)-distribution by
\[
\mathrm{Mom}_n(\underline{X})
:=
\left(
\E\!\left[\prod_{j=1}^k X_j^{i_j}\right]
\colon i_1,\dots,i_k\ge 0,\ \sum_{j=1}^k i_j\le n
\right).
\]
For random vectors \(\underline{X},\underline{Y}\in\C^k\), we write
\[
\underline{X}\overset{d}{=}_n \underline{Y},
\]
if they have the same \(n\)-distribution, that is,
\(\mathrm{Mom}_n(\underline{X})=\mathrm{Mom}_n(\underline{Y})\).

We say that random variables \(X\) and \(Y\) are \(n\)-independent if their joint moments factorize up to total order \(n\), i.e.
\[
\E[X^iY^j]=\E[X^i]\E[Y^j],
\qquad i,j\ge 0,\quad i+j\le n.
\]

We say that a sequence of random variables \((X_N)_{N\ge1}\) converges in \(n\)-distribution to \(X\) if
\[
\lim_{N\to\infty}\E[X_N^k]=\E[X^k],
\qquad k=1,\dots,n.
\]
We denote this by
\[
X_N \xrightarrow{d}_n X.
\]
We also say that a random variable \(X\) and a pair \((A,B)\) are affinely \(n\)-independent if
\[
\E[X^iA^iB^j]=\E[X^i]\E[A^iB^j],
\qquad i,j\ge 0,\quad i+j\le n.
\]
Under this condition, for every \(k=0,\dots,n\),
\[
\E[(AX+B)^k]
=
\sum_{i=0}^k \binom{k}{i}\E[X^i]\E[A^iB^{k-i}].
\]
Moreover, \eqref{eq:add} is precisely \((n-d)\)-independence of \(S\) and \(T\), and equivalently affine \((n-d)\)-independence of \(S\) and \((1,T)\). Likewise, \eqref{eq:mult} is equivalent to affine \((n-d)\)-independence of \(S\) and \((T,0)\).

\section{Finite free perpetuities}\label{sec:ffp}

Below we give a general definition of a finite free perpetuity. All random variables are complex-valued.

\begin{definition}\label{def:finite_free_perp}
  Let \(A\) and \(B\) be random variables such that
\(\E[|A|^{n}]<\infty\) and \(\E[|B|^{n}]<\infty\).
Consider the affine fixed-point equation
\begin{align}\label{eq:finite_free_perp}
X\stackrel{d}{=}_n AX+B,
\qquad
X \text{ and } (A,B) \text{ are affinely \(n\)-independent}.
\end{align}
If a random variable \(X\) satisfies \eqref{eq:finite_free_perp}, then we call the polynomial
\[
p_X(z):=\E[(z-X)^n]
\]
a finite free perpetuity polynomial of order \(n\) generated by \((A,B)\).
\end{definition}

\begin{remark}
Since
\[
p_X(z)=\E[(z-X)^n]
=\sum_{i=0}^n \binom{n}{i}(-1)^i \E[X^i]\,z^{n-i},
\]
the polynomial \(p_X\) is monic of degree \(n\), and it uniquely determines the moments
\(\E[X],\ldots,\E[X^n]\).
Now suppose \(X\) satisfies \eqref{eq:finite_free_perp}. Then
\[
p_X(z)=\E[(z-(AX+B))^n].
\]
Expanding and using affine \(n\)-independence,
\begin{align*}
p_X(z)
&=\sum_{i=0}^n \binom{n}{i}(-1)^i \E[X^i]\E\!\left[A^i(z-B)^{n-i}\right].
\end{align*}
On the other hand, on \(\{A\neq0\}\),
\[
A^n p_X\!\left(\frac{z-B}{A}\right)
=\sum_{i=0}^n \binom{n}{i}(-1)^i \E[X^i]A^i(z-B)^{n-i}.
\]
Taking expectations and adding the contribution of \(\{A=0\}\), where only the term \(i=0\) remains, yields
\[
p_X(z)
=
\E\left[A^n p_X\!\left(\frac{z-B}{A}\right)\mathbf 1_{\{A\neq0\}}\right]
+\E\left[(z-B)^n\mathbf 1_{\{A=0\}}\right].
\]
Thus every finite free perpetuity polynomial satisfies the fixed-point equation above. 
\end{remark}

\begin{example}
    Recall that in Section \ref{sec:ex} we considered a polynomial fixed point equation of the form 
\begin{align}\label{eq:fixedq2}
q_n
=
p_{n,(a,a+b)}\boxtimes_n (e_{\boxtimes_n} \boxplus_n q_n).
\end{align}
Assume $b,a+b\notin\{1,1-\frac1n,\ldots,\frac1n\}$. 
Let $A$ be any $U$-transform of $p_{n,(a,a+b)}$ and $X$ be any $U$-transform of $q_n$, so that,
 \[
    p_{n,(a,a+b)}(z) = \E[(z-A)^n]\qquad\mbox{and}\qquad     q_n(z) = \E[(z-X)^n]. 
\]
If $X$ and $(A,A)$ are affinely $n$-independent, then, by Lemma \ref{lem:U_transf_finite_free_conv}, for all $z\in\C$,
\[
\E[(z-X)^n] = q_n(z)  
=
p_{n,(a,a+b)}\boxtimes_n (e_{\boxtimes_n} \boxplus_n q_n)(z) = \E[(z-A(X+1))^n].
\]
Thus, this polynomial fixed-point equation can be reformulated probabilistically as \eqref{eq:finite_free_perp} with $B=A$. 

If $b=1$, then $q_n\in\pols_{m}(\C)$ with $m=n-1$. Then, letting $q_n(z) = \E[(z-X)^{m}]$, again by Lemma \ref{lem:U_transf_finite_free_conv},  \eqref{eq:fixedq2} is equivalent to \eqref{eq:finite_free_perp} with $n=m$ and $(A,B)=(A',A')$, where $A'$ is any $U$-transform of $D_n p_{n,(a,a+1)}$ such that $X$ and $(A',A')$ are affinely $m$-independent. We note that $A'$ is well defined, that is that $D_n p_{n,(a,a+1)}\in\pols_{m}(\C)$; it follows from Corollary~\ref{cor:monic_deriv} and the fact that $p_{n,(a,a+1)}$ is $1$-monic. 
\end{example}

 Now we establish existence and uniqueness of a finite free perpetuity. 
\begin{theorem}
\label{thm:finite_free_perp} 
Assume that $\E[A^k]\neq 1$ for all $k=1,\dots n$. Then the finite free perpetuity of order $n$  exists and is unique. 
\end{theorem}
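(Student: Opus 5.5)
The plan is to reduce the problem to a triangular linear system for the moments $m_k:=\E[X^k]$, $k=1,\dots,n$, and then realize the solution through the $U$-transform. By the Remark following Definition~\ref{def:finite_free_perp}, the condition $X\stackrel{d}{=}_n AX+B$ with affine $n$-independence is equivalent to
\[
m_k=\sum_{i=0}^k\binom{k}{i}m_i\,\E[A^iB^{k-i}],\qquad k=1,\dots,n,\quad m_0=1.
\]
Isolating the term $i=k$ gives
\[
(1-\E[A^k])\,m_k=\sum_{i=0}^{k-1}\binom{k}{i}m_i\,\E[A^iB^{k-i}].
\]
Since $\E[A^k]\neq1$, this determines $m_k$ recursively from $m_0,\dots,m_{k-1}$. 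All mixed moments $\E[A^iB^{k-i}]$ are finite by Hölder's inequality together with $\E|A|^n,\E|B|^n<\infty$.

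\emph{Uniqueness.} Any two solutions $X,X'$ produce the same vector $(m_1,\dots,m_n)$, by induction on $k$ using the recursion above. Since $p_X(z)=\sum_i\binom{n}{i}(-1)^i m_i z^{n-i}$, the finite free perpetuity polynomial is unique. Note that uniqueness is asserted at the level of the polynomial $p_X$, or equivalently of the $n$-distribution, and not of the law of $X$.

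\emph{Existence.} Define $m_1,\dots,m_n$ by the recursion and set $p(z):=\sum_{i=0}^n\binom{n}{i}(-1)^i m_i z^{n-i}\in\pols_n(\C)$. By Remark~\ref{rem:MK}, $p$ has a $U$-transform: there is a random variable $X_0$, uniform on a finite multiset, with $\E[X_0^k]=m_k$ for $k\le n$ (Corollary~\ref{cor:E[U^m]}). On the sufficiently rich probability space we take $X$ with the law of $X_0$, independent of $(A,B)$. To justify this realization, I would either enlarge the space by a product construction, or note that only the joint law matters for the definition. Full independence implies affine $n$-independence, so
\[
\E[(AX+B)^k]=\sum_{i=0}^k\binom{k}{i}m_i\,\E[A^iB^{k-i}]=m_k=\E[X^k],\qquad k\le n,
\]
by the recursion. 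Hence $X\stackrel{d}{=}_n AX+B$, and $p_X=p$ is a finite free perpetuity.

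The main obstacle is not algebraic but foundational: ensuring that $X$ can be chosen independent of an arbitrary given pair $(A,B)$ on the fixed space $(\Omega,\mathcal F,\P)$. Richness with respect to finite discrete laws does not obviously guarantee independence from a prescribed $(A,B)$. I would handle this by interpreting Definition~\ref{def:finite_free_perp} as a statement about joint $n$-distributions, and realizing $(X,A,B)$ on a product space. The resulting moments coincide, and that is all the definition uses.
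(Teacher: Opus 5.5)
Your proposal is correct and follows the paper's argument: you isolate the $i=k$ term in $\E[X^k]=\sum_{i=0}^k\binom{k}{i}\E[X^i]\E[A^iB^{k-i}]$ and use $\E[A^k]\neq 1$ to determine the moments recursively, exactly as in \eqref{eq:finte_free_perp_moments}. Your explicit construction of a solution $X$ — a $U$-transform of the resulting polynomial, realized independently of $(A,B)$ on a product space — just spells out the existence step that the paper leaves implicit.
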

    \begin{proof}
    From \eqref{eq:finite_free_perp} we obtain that for all $k=1,\ldots,n$,
    \begin{align*}
        \E[X^k]=\E[(AX+B)^k]
         = \sum_{i=0}^k\binom{k}{i}\E[X^i]\E[A^iB^{k-i}]
    \end{align*}
    or, equivalently, that for all $k=1,\ldots,n$,
\begin{align}\label{eq:finte_free_perp_moments}
        \E[X^k]=\frac{\sum_{i=0}^{k-1}\binom{k}{i}\E[X^i]\E[A^iB^{k-i}]}{1-\E[A^k]}
    \end{align}
Thus, the first $n$ moments of $X$ are uniquely determined by the mixed moments of $(A,B)$. Therefore, 
$p_X(z)=\E[(z-X)^n]$ exists and is unique. 
\end{proof}

\subsection{Reduction of the order of a finite free perpetuity}\label{sec:reduct}

In this section, we describe the general framework, which leads to finite free perpetuities of lower degree. Recall that in the motivating example, in the critical case \(b=1\) we had $\coef{1}{n}(p_A) = \E[A]=1$, where $p_A(z):=p_{n,(a,a+1)}(z)=\E[(z-A)^n]$. 
In that situation, the finite free perpetuity of order \(n\) does not exist, but after a suitable reduction one still obtains a meaningful object of degree \(n-1\). We now describe the general mechanism behind this phenomenon.

Set
\[
d:=\max\{k\in\{0,1,\dots,n\}\colon \E[A^k]= 1\},
\]
and assume \(0<d<n\). Then the assumptions of Theorem \ref{thm:finite_free_perp} are not satisfied. We then define \(p_{X'}\) to be the finite free perpetuity polynomial of order \((n-d)\) generated by a pair \((A',B')\) whose joint \((n-d)\)-distribution is given by
\[
\E[(A')^i(B')^j]=\E[A^{i+d}B^j],
\qquad i,j\ge 0,\quad i+j\le n-d.
\]
By construction,
\[
\E[(A')^k]=\E[A^{d+k}] \neq 1,
\qquad k=1,\dots,n-d,
\]
so Theorem~\ref{thm:finite_free_perp} applies to \((A',B')\). Hence
\[
p_{X'}(z)=\E[(z-X')^{\,n-d}]
\]
is well defined and uniquely determined.
Moreover,
\[
\E[(z-X')^{n-d}]
=\E[(z-(A'X'+B'))^{n-d}]
=\E\left[A^d(z-(AX'+B))^{n-d}\right].
\]
Thus the reduced perpetuity polynomial is obtained from the original affine recursion by weighting with the factor \(A^d\).

The construction used in the motivating example for the critical case \(b=1\) is exactly the special case \(d=1\) of the general reduction described above.
Indeed, in that example the affine recursion has the form \eqref{eq:finite_free_perp} with $B=A$. According to the general reduction, one should then pass to order \(n-1\) and replace \((A,B)\) by a pair \((A',B')\) satisfying
\[
\E[(A')^i(B')^j]=\E[A^{i+1}B^j],
\qquad i+j\le n-1.
\]
Since here \(B=A\), this becomes
\[
\E[(A')^i(B')^j]=\E[A^{i+j+1}],
\qquad i+j\le n-1.
\]
Thus it is natural to take \(B'=A'\), where \(A'\) is any random variable with
\[
\E[(A')^k]=\E[A^{k+1}],
\qquad k=0,\dots,n-1.
\]
Now let $p_A(z):=\E[(z-A)^n]$. By the formula for the normalized polar derivative,
\[
D_n p_A(z)=\E[A(z-A)^{n-1}],
\]
so the coefficients of \(D_n p_A\) are exactly the shifted moments
\[
\coef{k}{n-1}(D_n p_A)=\E[A^{k+1}], \qquad k=0,\dots,n-1.
\]
Therefore any \(U\)-transform of \(D_n p_A\) has precisely the required \((n-1)\)-distribution, and may be taken as \(A'\).

In other words, the passage from \(p_A\) to \(D_n p_A\) comes from the general rule
\[
\E[(z-X)^{n-1}]
=
\E\left[A(z-(AX+A))^{n-1}\right].
\]

\subsection{Relation to classical perpetuities}
Consider a pair of (complex-valued) random variables $(A,B)$ and the affine fixed-point stochastic equation
\begin{equation}\label{eq:classical-perpetuity}
X \stackrel{d}{=} AX+B,\qquad X\mbox{ and }(A,B)\mbox{ are independent.}
\end{equation}
A (complex-valued) random variable $X$ whose distribution satisfies
\eqref{eq:classical-perpetuity} is called a  perpetuity.

In the literature, \cite{BurBook16}, the perpetuities are usually studied for real $A,B,X$. The complex-valued case follows by identifying \(\C\) with \(\R^2\); see Appendix~\ref{app}.
In particular, if
\[
\E[\log |A|]<0
\qquad\mbox{and}\qquad
\E[\max\{\log |B|,0\}]<\infty,
\]
then the series
\[
\sum_{n=1}^\infty A_1\ldots A_{n-1} B_n,
\]
where $(A_k,B_k)_{k}$ are iid copies of $(A,B)$, 
converges almost surely and its law is the unique solution in law to \eqref{eq:classical-perpetuity}.

Note that the above conditions hold true under the stronger condition (via the Jensen inequality)
\begin{align}\label{eq:finite_moments}
\E[|A|^n]<1
\qquad\text{and}\qquad
\E[|B|^n]<\infty.
\end{align}
Moreover, \eqref{eq:finite_moments} imply that \(\E[|X|^n]<\infty\), \cite[Section 2.3]{BurBook16}.  

Comparing \eqref{eq:classical-perpetuity} with \eqref{eq:finite_free_perp}, we see that under \eqref{eq:finite_moments}, every perpetuity induces a finite free perpetuity, but the condition implying existence and uniqueness of the finite free perpetuity is weaker than the corresponding conditions for classical perpetuities. From this perspective, finite free perpetuities of order $n$ may be viewed as a reduced model in which a distribution is encoded only by its first $n$ moments, and independence is replaced by factorization of mixed moments up to order $n$. As the name suggests, finite free perpetuities are expected to approximate free perpetuities. It is therefore natural to view them as a bridge between the classical and free-probabilistic frameworks.

\subsection{Further properties of finite free perpetuities}
\begin{proposition}\label{prop:lim_E[S_N^k]}
   Let \(A\) and \(B\) be random variables such that
\(\E[|A|^{n}]<\infty\) and \(\E[|B|^{n}]<\infty\). Let 
$X$ be a solution to \eqref{eq:finite_free_perp}. 

Define   
\[
S_0=0,\qquad S_N= A_N S_{N-1}+B_N, \quad N\geq 1,
    \]
 where $(A_j,B_j)_j$ are iid copies of $(A,B)$. 
If $|\E[A^m]|<1$ for all $m=1,\ldots,n$, then as $N\to\infty$,
\[
S_N\xrightarrow{d}_n X.
\]
\end{proposition}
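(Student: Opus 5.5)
We need to show that $\E[S_N^k]\to\E[X^k]$ for $k=1,\dots,n$. The plan is a strong induction on $k$, using only the moment recursion.

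\emph{Independence structure.} Since $S_{N-1}$ is a measurable function of $(A_j,B_j)_{j\le N-1}$, it is fully independent of $(A_N,B_N)$. Hence
\[
\E[S_N^k]=\E[(A_NS_{N-1}+B_N)^k]=\sum_{i=0}^k\binom{k}{i}\E[S_{N-1}^i]\,\E[A^iB^{k-i}].
\]
Finiteness of these moments follows inductively in $N$ from $\E|A|^n,\E|B|^n<\infty$ and Hölder's inequality. Set $x_N^{(k)}:=\E[S_N^k]$ and $a_k:=\E[A^k]$. The identity above reads
\[
x_N^{(k)}=a_k\,x_{N-1}^{(k)}+c_{N-1}^{(k)},\qquad c_{N-1}^{(k)}:=\sum_{i=0}^{k-1}\binom{k}{i}x_{N-1}^{(i)}\,\E[A^iB^{k-i}],
\]
so for fixed $k$ the sequence $(x^{(k)}_N)_N$ satisfies a scalar linear recursion with coefficient $a_k$ and forcing term depending only on lower-order moments.

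\emph{The fixed point.} By affine $n$-independence, the target moments satisfy $\E[X^k]=a_k\E[X^k]+c^{(k)}$, where $c^{(k)}$ is the same expression with $x^{(i)}_{N-1}$ replaced by $\E[X^i]$. This is exactly \eqref{eq:finte_free_perp_moments}. Since $|a_k|<1$, the hypothesis of Theorem \ref{thm:finite_free_perp} holds, so these moments are uniquely determined.

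\emph{Induction on $k$.} The base case $k=0$ is trivial, since all zeroth moments equal $1$. Assume $x_N^{(i)}\to\E[X^i]$ for all $i<k$. Then $c_N^{(k)}\to c^{(k)}$, because it is a finite linear combination of these moments with fixed coefficients. Setting $e_N:=x_N^{(k)}-\E[X^k]$, we get
\[
e_N=a_k e_{N-1}+\varepsilon_{N-1},\qquad \varepsilon_N:=c_N^{(k)}-c^{(k)}\to0.
\]
We now need the standard lemma: if $|a|<1$ and $\varepsilon_N\to0$, then $e_N\to0$. Iterating gives
\[
e_N=a^Ne_0+\sum_{j=0}^{N-1}a^{N-1-j}\varepsilon_j.
\]
Split the sum at an index $M$ beyond which $|\varepsilon_j|<\eta$. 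The tail of the sum is then bounded by $\eta/(1-|a|)$. The first $M$ terms, like $a^Ne_0$, are killed by the factor $|a|^{N-M}$.

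The argument is essentially routine. The only points needing care are these: independence of $S_{N-1}$ from $(A_N,B_N)$ is genuine, so full factorization holds; and the hypothesis $|a_k|<1$ is used separately for each $k$ and is exactly what makes the linear recursion contractive. This Cesàro-type lemma is the only analytic step, and it is where the strict inequality is essential.
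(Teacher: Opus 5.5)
Your proof is correct and follows the paper's argument. Both use the same binomial expansion, which relies on the genuine independence of $S_{N-1}$ and $(A_N,B_N)$, followed by induction on the moment order and a contraction lemma for the scalar recursion with coefficient $\E[A^k]$, $|\E[A^k]|<1$. The only cosmetic difference is that you center the recursion at $\E[X^k]$ and show that the error tends to zero, so convergence and identification of the limit happen in one step. The paper instead first proves convergence via Lemma~\ref{elementary_lemma} (Stolz--Ces\`aro) and then identifies the limits through \eqref{eq:finte_free_perp_moments}.
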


Before proceeding to the main proof, we establish the following elementary lemma.

\begin{lemma}\label{elementary_lemma}
    Let $(a_n)_{n \ge 1}$ be a sequence of complex numbers and let $c \in\C$ be such that $|c|<1$. Assume that the sequence $(a_n - ca_{n-1})_{n\ge 2}$ converges. Then, $(a_n)_{n\ge 1}$ converges.
\end{lemma}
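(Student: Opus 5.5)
The idea is to show that the limit $L$ of $a_n$, if it exists, must satisfy $L = b + cL$, where $b$ is the limit of $a_n - c a_{n-1}$. That gives the candidate $L := b/(1-c)$, which is well defined since $|c|<1$ forces $c\neq 1$. I then show $a_n\to L$ by tracking the error sequence $e_n := a_n - L$.

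\emph{Step 1 (error recursion).} Put $\varepsilon_n := (a_n - c a_{n-1}) - b$, so $\varepsilon_n\to 0$. Since $L = cL + b$, a direct computation gives
\[
e_n = c\, e_{n-1} + \varepsilon_n, \qquad n\ge 2.
\]

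\emph{Step 2 (unroll).} Iterating from $n=2$ yields
\[
e_n = c^{\,n-1} e_1 + \sum_{j=2}^{n} c^{\,n-j}\varepsilon_j .
\]
The first term tends to $0$ because $|c|<1$.

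\emph{Step 3 (discrete Toeplitz argument).} Fix $\eta>0$ and pick $M$ such that $|\varepsilon_j|<\eta$ for $j> M$. Split the sum into the ranges $j\le M$ and $j>M$.
\begin{itemize}
\item For $j\le M$ the contribution is at most $|c|^{\,n-M}\sum_{j\le M}|\varepsilon_j|$, which tends to $0$ as $n\to\infty$.
\item For $j>M$ the contribution is at most $\eta\sum_{k\ge 0}|c|^k = \eta/(1-|c|)$.
\end{itemize}
Hence $\limsup_n |e_n| \le \eta/(1-|c|)$, and since $\eta>0$ is arbitrary, $e_n\to 0$. Therefore $a_n\to L$.

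The only genuinely delicate point is this split in Step 3. It is exactly where the hypothesis $|c|<1$ is used, through the summability of the geometric weights $|c|^{\,n-j}$. Everything else is algebra.
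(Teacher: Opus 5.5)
Your proof is correct and follows essentially the paper's route: you unroll the recursion into a geometrically weighted sum $\sum_j c^{\,n-j}\varepsilon_j$ of a null sequence (after separating off the limit) and show that this sum vanishes. The only difference is in that last step: you use a direct $\eta$-splitting (Toeplitz) estimate, whereas the paper applies Stolz--Ces\`aro to $\sum_k |c|^{-k}|\varepsilon_k|/|c|^{-n}$, which also forces it to treat $c=0$ separately — a case your version handles automatically.
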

\begin{proof}
The assertion holds trivially if $c=0$. We assume further that $c\neq 0$.
    Let $a_0=0$ and let $b_n=a_n-ca_{n-1}$ for $n\ge 1$. Thus, for all $n\ge 1$, 
    \[a_n=\sum_{k=1}^{n}c^{n-k}b_k.
    \]
    We denote the limit of $(b_n)_{n\ge 1}$ by $L\in\C$. Define the sequence $\varepsilon_k = b_k-L$ for all $k\ge 1$. Hence,
    \begin{align}\label{eq:an}
    a_n = L\sum_{k=0}^{n-1}c^k+\sum_{k=1}^n
c^{n-k}\varepsilon_k.    
\end{align}
Clearly, under $|c|<1$, the first term above converges as $n\to\infty$. 

Since $|c| < 1$, the sequence $|c|^{-n}$ is strictly increasing to $+\infty$. 
By the Stolz–Ces\`aro theorem,
\[
\limsup_{n \to \infty} \left| \sum_{k=1}^n c^{n-k} \varepsilon_k \right| \leq 
\lim_{n \to \infty} \frac{\sum_{k=1}^n |c|^{-k} |\varepsilon_k|}{|c|^{-n}} = 
\lim_{n \to \infty} \frac{|c|^{-(n+1)} |\varepsilon_{n+1}|}{|c|^{-(n+1)} - |c|^{-n}} = 0.
\]
Thus, the second term in \eqref{eq:an} converges to $0$ as $n\to\infty$ and the proof is complete.
\end{proof}

\begin{proof}[Proof of Proposition \ref{prop:lim_E[S_N^k]}]

Define $T_N(m)=\E[S_N^m]$, in particular $T_N(0)=1$, and $x_{i, j}=\E[B^iA^j]$. Then, by binomial expansion of $S_N^m=(A_N S_{N-1}+B_N)^m$, we obtain 
\[
T_N(m)=\sum_{i=0}^m \binom{m}{i}\,x_{i,m-i}\,T_{N-1}(m-i).
\]
Hence, for all $N\in\N$ and all $m\in\N$,
\begin{align}\label{eq:rec_T_N(k)}
T_N(m)-x_{0,m}T_{N-1}(m)=\sum_{i=1}^{m}\binom{m}{i}x_{i, m-i}T_{N-1}(m-i).
\end{align}
Using the above and Lemma \ref{elementary_lemma} we prove by induction that for all $m\in\{1,\dots,n\}$ the sequence $(T_N(m))_N$ converges. Here, we have used the assumption that $|x_{0,m}|=|\E[A^m]|<1$ in order to apply Lemma \ref{elementary_lemma}. Denote the limit by $L_m = \lim_{N \to \infty} T_N(m)$. As $N \to \infty$, \eqref{eq:rec_T_N(k)} implies that for all $m \in \{1, \dots, n\}$,
\begin{align*}
L_m=\frac{\sum_{i=1}^{m}\binom{m}{i}x_{i, m-i}L_{m-i}}{1-x_{0,m}}=\frac{\sum_{i=1}^{m}\binom{m}{i}\E[B^iA^{m-i}]L_{m-i}}{1-\E[A^m]}=\frac{\sum_{l=0}^{m-1}\binom{m}{l}\E[B^{m-l}A^l]L_{l}}{1-\E[A^m]}.
\end{align*}
From the above and \eqref{eq:finte_free_perp_moments}, an induction on $m$ yields $L_m=\E[X^m]$ for each $m \in \{1, \dots, n\}$.
\end{proof}
 
We now introduce a class of pairs \((A,B)\) that generate finite free perpetuities with real, nonnegative zeros. 

Fix $\kappa_n \in \N$ and a subset 
$\mathcal{I}_n \subseteq \{1,\dots,\kappa_n\}$. For
$d_0,\dots,d_{\kappa_n}\in\C$, we define the operator $G_{\mathcal{I}_n,\kappa_n}$ as follows:
\[
G_{\mathcal{I}_n,\kappa_n}(d_0,\dots,d_{\kappa_n})
    := (g_{\kappa_n}\circ g_{\kappa_n-1}\circ \cdots \circ g_1)(d_0),
\]
where, for each $i=1,\dots,\kappa_n$, the map 
$g_i\colon \C\to\C$ is given by
\[
g_i(z)=
\begin{cases}
d_i z, & i\in \mathcal{I}_n,\\
z+d_i, & i\notin \mathcal{I}_n.
\end{cases}
\]

\begin{definition}[$n$-admissibility]\label{def:n_admissibility} 
We say that a pair $(A_n,B_n)$ is $n$-admissible if there exist $\kappa_n\in\N$, $\mathcal{I}_n\subset \{1,\dots,\kappa_n\}$ and
independent random variables $D_1,\dots,D_{\kappa_n}$ satisfying
$\E[|D_i|^n]<\infty$ for each $i$, such that
\[
(A_n,B_n)\stackrel{d}{=}_n
\left(
\prod_{i\in\mathcal{I}_n}D_i,\,
G_{\mathcal{I}_n,\kappa_n}(0,D_1,\dots,D_{\kappa_n})
\right),
\]
that is, $A_n$ is the slope and $B_n$ is the intercept of the
random affine map $G_{\mathcal{I}_n,\kappa_n}(\cdot, D_1,\ldots,D_{\kappa_n})$.
\end{definition}

\begin{notation}
    For an $n$-admissible pair $(A_n, B_n)$ we denote \(p_n^{[i]}(z):=\E[(z-D_i)^n]\) for all $i\in\{1,\dots,\kappa_n\}$.
\end{notation}
\begin{corollary}\label{cor:n-admiss}
    Assume $(A_n, B_n)$ is $n$-admissible. Then 
    \begin{itemize}
\item[(i)] the following hold:
    \begin{align*}
        \E[A_n^m] &= \prod_{i\in\mathcal{I}_n} \coef{m}{n}(p_{n}^{[i]}), \quad \text{for all } m\in\{1,\dots,n\}, \\[1ex]
        \E[B_n] &= G_{\mathcal{I}_n,\kappa_n}(0, \coef{1}{n}(p_n^{[1]}),\dots,\coef{1}{n}(p_n^{[\kappa_n]})).
    \end{align*}
   In particular, these moments are real, if $p_n^{[i]}$ have real coefficients for all $i\in\{1,\dots,\kappa_n\}$.
\item[(ii)] for all random variable $Y_n$, which is affinely $n$-independent of $(A_n,B_n)$ one has, with $p_{Y_n}(z):= \E[(z-Y_n)^{n}]$,
\begin{align}\label{eq:AYB}
&\E[(z - (A_nY_n+B_n))^{n}] \nonumber \\
&\qquad = p_{n}^{[\kappa_n]}\boxast_n^{(\kappa_n)} \left( p_{n}^{[\kappa_n-1]}\boxast_n^{(\kappa_n-1)}\left(\cdots p_n^{[2]}\boxast_n^{(2)}\left( p_{n}^{[1]}\boxast_n^{(1)} p_{Y_n}\right)\cdots \right)\right) (z),
\end{align}
where $\boxast_n^{(i)}=\boxtimes_n$  if  \(i\in\mathcal{I}_n\) and $\boxast_n^{(i)}=\boxplus_n$ otherwise.
\item[(iii)] If $|\E[A_n^m]| <1$ for all $m=1,\ldots,n$, then the finite free perpetuity $p_{X_n}$ of order $n$ generated by $(A_n,B_n)$ is the unique polynomial of degree $n$ satisfying
    \begin{align}\label{eq:poyperp}
p_{X_n} =  p_{n}^{[\kappa_n]}\boxast_n^{(\kappa_n)} \left( p_{n}^{[\kappa_n-1]}\boxast_n^{(\kappa_n-1)}\left(\cdots p_n^{[2]}\boxast_n^{(2)}\left( p_{n}^{[1]}\boxast_n^{(1)} p_{X_n}\right)\cdots \right)\right).
    \end{align}
    \end{itemize} 
\end{corollary}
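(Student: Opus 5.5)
We work throughout with the representation of Definition~\ref{def:n_admissibility}: \(A_n=\prod_{i\in\mathcal I_n}D_i\) and \(B_n=G_{\mathcal I_n,\kappa_n}(0,D_1,\dots,D_{\kappa_n})\), with \(D_1,\dots,D_{\kappa_n}\) independent. Both sides of each identity in (i)--(iii) involve only moments of order at most \(n\), and equality in \(n\)-distribution preserves these, so we may replace \((A_n,B_n)\) by this concrete pair. The key observation is that \(D_i\) is a \(U\)-transform of \(p_n^{[i]}\). Hence, by Corollary~\ref{cor:E[U^m]}, \(\E[D_i^m]=\coef{m}{n}(p_n^{[i]})\) for \(m\le n\).

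For (i), independence gives
\[
\E[A_n^m]=\prod_{i\in\mathcal I_n}\E[D_i^m]=\prod_{i\in\mathcal I_n}\coef{m}{n}(p_n^{[i]}).
\]
For \(\E[B_n]\), we induct along the composition \(g_{\kappa_n}\circ\cdots\circ g_1\). Set \(Z_0=0\) and \(Z_i=g_i(Z_{i-1})\). Then \(Z_{i-1}\) is a function of \(D_1,\dots,D_{i-1}\) only, so it is independent of \(D_i\). Therefore \(\E[D_iZ_{i-1}]=\E[D_i]\E[Z_{i-1}]\) and \(\E[Z_{i-1}+D_i]=\E[Z_{i-1}]+\E[D_i]\), which is exactly \(g_i\) applied with \(d_i=\coef{1}{n}(p_n^{[i]})\). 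The reality claim is immediate, since each moment is a polynomial in the coefficients.

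For (ii), we build \(Y_n\) into the same iteration. Set \(W_0=Y_n\) and \(W_i=g_i(W_{i-1})\), so that \(W_{\kappa_n}=A_nY_n+B_n\). We then prove by induction on \(i\) that
\[
\E[(z-W_i)^n]=p_n^{[i]}\boxast_n^{(i)}\bigl(\cdots\bigl(p_n^{[1]}\boxast_n^{(1)}p_{Y_n}\bigr)\cdots\bigr)(z).
\]
The inductive step uses Lemma~\ref{lem:U_transf_finite_free_conv} with \(d=0\), in the relaxed form of the subsequent Remark. We need \(W_{i-1}\) and \(D_i\) to be \(n\)-independent in the additive case, and to satisfy \(\E[W_{i-1}^jD_i^j]=\E[W_{i-1}^j]\E[D_i^j]\) in the multiplicative case.

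This factorization is the main obstacle. Only affine \(n\)-independence of \(Y_n\) and \((A_n,B_n)\) is assumed, not genuine independence of \(Y_n\) from the \(D_i\). We plan to handle it by noting that the right-hand side of the claimed identity depends only on \(\mathrm{Mom}_n(Y_n)\), and the left-hand side depends only on the quantities \(\E[Y_n^j]\E[A_n^jB_n^k]\) for \(j+k\le n\), by affine \(n\)-independence. We may therefore replace \(Y_n\) by a variable with the same first \(n\) moments that is genuinely independent of \((D_1,\dots,D_{\kappa_n})\), using the richness of the probability space and Remark~\ref{rem:MK}. Neither side changes, and the induction then runs with full independence, since \(W_{i-1}\) is a function of \(Y_n,D_1,\dots,D_{i-1}\).

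For (iii), the hypothesis \(|\E[A_n^m]|<1\) implies \(\E[A_n^m]\ne1\), so Theorem~\ref{thm:finite_free_perp} gives a unique finite free perpetuity \(p_{X_n}\). Take \(Y_n=X_n\), which is affinely \(n\)-independent of \((A_n,B_n)\) by definition. Then (ii) together with \(X_n\stackrel{d}{=}_nA_nX_n+B_n\) yields \eqref{eq:poyperp}.

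Conversely, let \(q\) be any monic degree-\(n\) solution of \eqref{eq:poyperp}. Choose a \(U\)-transform \(Y\) of \(q\) independent of \((A_n,B_n)\); it is in particular affinely \(n\)-independent of the pair. Applying (ii) gives \(\E[(z-Y)^n]=\E[(z-(A_nY+B_n))^n]\), so \(Y\stackrel{d}{=}_nA_nY+B_n\). By the uniqueness part of Theorem~\ref{thm:finite_free_perp}, \(q=p_{X_n}\).
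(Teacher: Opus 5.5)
Your proof is correct and follows the paper's argument: (i) comes from the definition of $n$-admissibility together with Corollary~\ref{cor:E[U^m]}, (ii) from iterating Lemma~\ref{lem:U_transf_finite_free_conv}, and (iii) from combining (ii) with the existence and uniqueness in Theorem~\ref{thm:finite_free_perp}. Replacing $Y_n$ by a genuinely independent variable with the same first $n$ moments makes explicit the factorization between each intermediate variable and $D_i$, which the paper's one-line ``apply the lemma iteratively'' leaves implicit; and reading uniqueness in (iii) as uniqueness among monic polynomials is the right interpretation, since the right-hand side of \eqref{eq:poyperp} is linear in $p_{X_n}$, so nonzero scalar multiples of a solution are again solutions.
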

\begin{proof}
    \begin{itemize}
        \item[(i)] Follows from the definition of $n$-admissibility and Corollary \ref{cor:E[U^m]}.
        \item[(ii)] is obtained by applying Lemma \ref{lem:U_transf_finite_free_conv} iteratively.
        \item[(iii)] It is enough to notice that equation $X\overset{d}{=}_nAX+B$ has a unique solution as a consequence of Theorem \ref{thm:finite_free_perp} and by (ii) it is equivalent to \eqref{eq:poyperp}. 
    \end{itemize}
\end{proof}

\begin{example}
Assume that $D_1$, $D_2$ are independent. 
Simple examples of \(n\)-admissible pairs include
\begin{itemize}
    \item $(A,B) = (D_1,D_2)$. Equation \eqref{eq:finite_free_perp} is then equivalent to $p_X = p^{[2]} \boxplus_n (p^{[1]} \boxtimes_n p_{X})$.
    \item $(A,B) = (D_2,D_1 D_2)$. Equation \eqref{eq:finite_free_perp} is then equivalent to $p_X = p^{[2]} \boxtimes_n  (p^{[1]} \boxplus_n p_{X})$.
\end{itemize}
\end{example}

\begin{theorem}\label{thm:rel_roots}
 Assume $(A,B)$ is $n$-admissible such that
 \begin{itemize}
 \item[(i)] $p_n^{[i]}\in\pols_n(\R_{\geq 0})$, $i=1,\ldots,\kappa_n$,
     \item[(ii)] $| \E[A^m]| = \prod_{i\in\mathcal{I}_n} | \coef{m}{n}(p_{n}^{[i]})|<1$ for all $m=1,\ldots,n$. 
 \end{itemize}
 Then, the finite free perpetuity $p_X$ of order $n$ generated by $(A,B)$ has only real, non-negative roots. 
\end{theorem}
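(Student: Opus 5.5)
Since $|\E[A^m]|<1$ for all $m\le n$, Theorem~\ref{thm:finite_free_perp} gives a unique finite free perpetuity $p_X$. Proposition~\ref{prop:lim_E[S_N^k]} then shows that the iteration $S_0=0$, $S_N=A_NS_{N-1}+B_N$ converges in $n$-distribution to $X$. We therefore approximate $p_X$ by the polynomials $q_N(z):=\E[(z-S_N)^n]$. Convergence in $n$-distribution means that the coefficients of $q_N$, namely $\E[S_N^k]$ for $k\le n$, converge to those of $p_X$. Hence $q_N\to p_X$ pointwise (indeed locally uniformly).

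The key step is to show by induction on $N$ that $q_N\in\pols_n(\R_{\ge0})$. For $N=0$ we have $q_0(z)=z^n$, whose roots are all $0$. For the inductive step, note that $S_{N-1}$ is built from $(A_j,B_j)_{j\le N-1}$, so it is fully independent of $(A_N,B_N)$. In particular $S_{N-1}$ and $(A_N,B_N)$ are affinely $n$-independent. Corollary~\ref{cor:n-admiss}(ii) then gives
\[
q_N=p_n^{[\kappa_n]}\boxast_n^{(\kappa_n)}\Bigl(\cdots\bigl(p_n^{[1]}\boxast_n^{(1)}q_{N-1}\bigr)\cdots\Bigr).
\]
One subtlety here: Corollary~\ref{cor:n-admiss}(ii) is stated for $(A_n,B_n)$ equal in $n$-distribution to the admissible representation. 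The mixed moments $\E[S_{N-1}^iA_N^iB_N^j]$ factor into $\E[S_{N-1}^i]\E[A_N^iB_N^j]$, and the latter depend only on the $n$-distribution of $(A_N,B_N)$, so the identity holds as stated. By hypothesis (i), every $p_n^{[i]}\in\pols_n(\R_{\ge0})$. By the induction hypothesis $q_{N-1}\in\pols_n(\R_{\ge0})$. Applying Proposition~\ref{prop:pres} $\kappa_n$ times gives $q_N\in\pols_n(\R_{\ge0})$.

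Finally, $q_N\in\pols_n(\R_{\ge0})\subset\tilde\pols_{\le n}(\R_{\ge0})$ and $q_N\to p_X$ pointwise, so Lemma~\ref{lem:conv_rel_pols} yields $p_X\in\tilde\pols_{\le n}(\R_{\ge0})$. Since $p_X$ is monic of degree $n$, it follows that $p_X\in\pols_n(\R_{\ge0})$.

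The main obstacle is the inductive step. There we must make sure that the iterated-convolution representation legitimately applies to $S_{N-1}$, which requires the independence and $n$-distribution bookkeeping described above. We also need the hypothesis $|\E[A^m]|<1$ in order to invoke Proposition~\ref{prop:lim_E[S_N^k]}, rather than the weaker condition $\E[A^m]\neq1$. The remaining steps are direct citations.
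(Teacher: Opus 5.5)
Your proposal is correct and follows essentially the same route as the paper. You iterate $S_N=A_NS_{N-1}+B_N$ from $S_0=0$, use Corollary~\ref{cor:n-admiss}(ii) together with Proposition~\ref{prop:pres} inductively to get $p_{S_N}\in\pols_n(\R_{\ge0})$, and then pass to the limit via Proposition~\ref{prop:lim_E[S_N^k]} and Lemma~\ref{lem:conv_rel_pols}. Your remarks on the affine $n$-independence of $S_{N-1}$ and $(A_N,B_N)$, and on the limit being monic of degree $n$, spell out details the paper leaves implicit.
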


\begin{proof}
Let $S_N = A_N S_{N-1}+B_N$, $S_0=0$, where $(A_k,B_k)_k$ are iid copies of $(A,B)$. Then, by Corollary \ref{cor:n-admiss} (ii),
\begin{align*}
 \E[ (z-S_N)^n] &=  \E[(z-(A_N S_{N-1}+B_N))^n] \\
&= p_{n}^{[\kappa_n]}\boxast_n^{(\kappa_n)} \left( p_{n}^{[\kappa_n-1]}\boxast_n^{(\kappa_n-1)}\left(\cdots p_n^{[2]}\boxast_n^{(2)}\left( p_{n}^{[1]}\boxast_n^{(1)} p_{S_{N-1}}\right)\cdots \right)\right) (z),
\end{align*}
where $p_{S_0}(z) = e_{\boxplus_n}(z) = z^n$.
Since  $\boxtimes_n$ and $\boxplus_n$ preserve real, non-negative roots (recall Proposition \ref{prop:pres}), we show by induction that for each $N$, $p_{S_N}\in \pols_{n}(\R_{\geq 0})$.
Using Proposition \ref{prop:lim_E[S_N^k]}, we obtain $\E[X^j] = \lim_{N\to\infty}\E[S_N^j]$ for all $j\in\{1,\dots,n\}$, i.e.,
\[
p_X(z) = \lim_{N\to\infty} p_{S_N}(z).
\]
Thus, by Lemma \ref{lem:conv_rel_pols} we obtain $p_X \in \pols_{n}(\R_{\geq 0})$, which completes the proof.
\end{proof}

\section{Approximation of free perpetuities}\label{sec:approx}

In Section \ref{sec:ex} we presented an example of a finite free perpetuity, for which the empirical root distribution, as the degree converges to infinity, converges to a free perpetuity. In the following section we show that under some natural conditions, the empirical root distribution $\meas{p_{X_n}}$ converges weakly to the distribution of a free perpetuity. 

We begin by recalling the notions needed to define a free perpetuity.
Let $(\mathcal{A},\tau)$ be a tracial $W^*$-probability space, that is,
$\mathcal{A}$ is a von Neumann algebra and $\tau$ is a faithful, normal, tracial state. By $\tilde{\mathcal{A}}$ we denote the algebra of unbounded operators affiliated with $\mathcal{A}$.  Throughout, we assume that $(\mathcal A,\tau)$ is large enough to contain freely independent self-adjoint affiliated operators with any prescribed probability distributions on $\R$.

A free perpetuity is a self-adjoint operator \(\X\in \tilde{\mathcal{A}}\) satisfying the affine fixed-point equation
\begin{align}\label{eq:affine}
\X \stackrel{d}{=} \A\, \X \,\A^\ast + \B,
\qquad \X\mbox{ and }
(\A,\B)\ \text{are $*$-free},
\end{align}
where \(\A,\B\in\tilde{\mathcal{A}}\) and \(\B=\B^\ast\geq0\). In contrast with \cite{FreePerp}, we also allow \(\B=0\). 

\begin{proposition}\label{prop:unique_free}
If \(\tau(\A^\ast\A)<1\) and $\tau(\B)<\infty$,
then \eqref{eq:affine} admits a unique  solution.
\end{proposition}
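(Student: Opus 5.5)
Since $\X$ and $(\A,\B)$ are $*$-free, the law of $\A\X\A^\ast+\B$ depends only on $\mu_\X$ and the joint $*$-distribution of $(\A,\B)$. I would therefore recast \eqref{eq:affine} as a fixed-point problem for the map $\Phi\colon\mu\mapsto$ law of $\A\Y\A^\ast+\B$, where $\Y\sim\mu$ is self-adjoint and $*$-free from $(\A,\B)$. Under $\tau(\B)<\infty$ and $\B\ge0$, I would work on the space $\MM_1$ of laws with finite first absolute moment. If we only need nonnegative $\X$, I would restrict to $\MM_1(\R_{\ge0})$. The natural metric on this space is the $1$-Wasserstein distance, or equivalently the noncommutative $L^1(\tau)$ distance of coupled operators. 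I would show that $\Phi$ is a strict contraction and conclude by Banach's fixed-point theorem, using completeness of the Wasserstein-$1$ space on $\R$.

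The key steps, in order, are as follows.

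\textbf{Step (a): $\Phi$ preserves $\MM_1$.} If $\Y$ is self-adjoint with $\tau(|\Y|)<\infty$, then
\[
\tau(|\A\Y\A^\ast|)\le \tau(\A|\Y|\A^\ast)+\tau(\A|\Y|\A^\ast)\text{-type bounds}.
\]
Splitting $\Y=\Y_+-\Y_-$, each $\A\Y_\pm\A^\ast\ge0$, and $\tau(\A\Y_\pm\A^\ast)=\tau(\A^\ast\A\,\Y_\pm)=\tau(\A^\ast\A)\tau(\Y_\pm)$ by freeness. Hence $\tau(|\A\Y\A^\ast+\B|)\le \tau(\A^\ast\A)\tau(|\Y|)+\tau(\B)<\infty$.

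\textbf{Step (b): contraction.} Given $\mu,\nu\in\MM_1$, I would realize a coupling as commuting self-adjoint $\Y_1,\Y_2$ in a commutative subalgebra with $\tau(|\Y_1-\Y_2|)=W_1(\mu,\nu)$, taking the optimal monotone coupling via quantile functions. I would then choose $(\A,\B)$ $*$-free from that subalgebra. Since the operators $\A\Y_i\A^\ast+\B$ are realized jointly, we get
\[
W_1(\Phi\mu,\Phi\nu)\le \tau\bigl(|\A(\Y_1-\Y_2)\A^\ast|\bigr)\le \tau(\A^\ast\A)\,W_1(\mu,\nu).
\]
The first inequality uses the standard fact that for self-adjoint affiliated $S,T$ one has $W_1(\mu_S,\mu_T)\le\tau(|S-T|)$. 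This follows from the Lidskii/Ky Fan type inequality $\int|F_S^{-1}-F_T^{-1}|\le \|S-T\|_1$ in finite von Neumann algebras. The second inequality is the computation of Step (a), applied with $\Y=\Y_1-\Y_2$.

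\textbf{Step (c): conclude.} Since $\tau(\A^\ast\A)<1$, $\Phi$ is a strict contraction on a complete metric space, so it has a unique fixed point in $\MM_1$. That fixed point is $\mu_\X$.

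The main obstacle is uniqueness outside $\MM_1$, that is, ruling out solutions $\X$ without finite first moment. To handle this, I would use an iteration argument. Starting from an arbitrary solution $\X$, I would iterate the equation $N$ times with free copies $(\A_k,\B_k)$. This expresses $\mu_\X$ as the law of
\[
\A_1\cdots\A_N\X\A_N^\ast\cdots\A_1^\ast+\sum_{k=1}^N \A_1\cdots\A_{k-1}\B_k\A_{k-1}^\ast\cdots\A_1^\ast .
\]
The first term tends to $0$ in measure, since $\tau(\A_N^\ast\cdots\A_1^\ast\A_1\cdots\A_N)=\tau(\A^\ast\A)^N\to0$ and $\X$ is $\tau$-measurable. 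The sum converges in $L^1(\tau)$ to the Banach fixed point. Convergence in measure then gives weak convergence of the laws, which identifies $\mu_\X$ with the fixed point found in Step (c). The delicate point is justifying the trace factorizations for unbounded affiliated $\A$. I would handle this by truncating with spectral projections and using normality of $\tau$.
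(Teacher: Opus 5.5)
Your proof is correct, but it takes a genuinely different route from the paper. The paper's proof is essentially by citation. Via the polar decomposition $\A=U|\A|$ it passes to $(\M,\tilde{\B})=(|\A|,U^\ast\B U)$ with $\M\ge0$. It then takes existence from \cite[Theorem 4.6 (iii)]{FreePerp} (or $\X=0$ if $\B=0$), and uniqueness from \cite[Theorem 4.2]{FreePerp}, relying on the observation that the latter proof does not use $\B\neq0$.

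You instead give a self-contained argument with two ingredients. The first is a $W_1$-contraction with constant $\tau(\A^\ast\A)$ on $\MM_1(\R)$, resting on the tracial Lidskii-type inequality $W_1(\mu_S,\mu_T)\le\tau(|S-T|)$. The second is an iteration showing that every solution, with no moment assumption, has the law of $S_\infty=\sum_{k\ge1}\A_1\cdots\A_{k-1}\B_k\A_{k-1}^\ast\cdots\A_1^\ast$.

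What your approach buys:
\begin{itemize}
\item independence from the external theorems, including not having to check their proofs in the case $\B=0$;
\item no polar-decomposition reduction, and a uniform treatment of $\B=0$;
\item an explicit free analogue of the classical perpetuity series, with a geometric rate $\tau(\A^\ast\A)^N$.
\end{itemize}
What it costs is the analytic bookkeeping for unbounded affiliated operators, which you correctly identify and the paper outsources: trace factorizations via spectral truncation and normality, joint continuity of multiplication in the measure topology, and the fact that convergence in measure gives weak convergence of spectral laws.

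Two small remarks. First, the first display in Step (a) is garbled. The bound you actually need and prove is $\tau(|\A\Y\A^\ast|)\le\tau(\A\Y_+\A^\ast)+\tau(\A\Y_-\A^\ast)=\tau(\A^\ast\A)\,\tau(|\Y|)$.

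Second, the Banach step is redundant. Write $S_\infty=\B_1+\A_1S_\infty'\A_1^\ast$, where $S_\infty'$ is the analogous series built from $(\A_k,\B_k)_{k\ge2}$; it is a copy of $S_\infty$ free from $(\A_1,\B_1)$. This shows directly that the law of $S_\infty$ solves the equation, so the iteration alone yields both existence and uniqueness. If you keep the Banach step, make the identification of the limit with the fixed point explicit via $\mathrm{law}(S_N)=\Phi^N(\delta_0)$ and the $L^1$ convergence $S_N\to S_\infty$.
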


\begin{proof}
By \cite[Remark 1.1]{FreePerp},  without loss of generality one may reduce \eqref{eq:affine} to the case where the coefficient multiplying \(\X\) is positive. More precisely, after replacing \((\A,\B)\) by a pair \((\M,\tilde{\B}) = (|\A|,U^\ast\B U)\), where the unitary operator $U$ from the polar decomposition of $\A=U|\A|$ and $|\A|=(\A^\ast\A)^{1/2}$ we obtain  with \(\M=\M^\ast\ge 0\), 
\begin{align}\label{eq:affine_sym}
\X \stackrel{d}{=} \M\,\X\,\M+\tilde{\B}, \qquad \X\mbox{ and } (\M,\tilde{\B})\ \text{are $*$-free}.
\end{align}

If \(\tau(\M^2)<1\), $\tau(\B)<\infty$ and $\B\geq0$, then existence follows from \cite[Theorem 4.6 (iii)]{FreePerp}. If \(\B=0\), then \(\X\stackrel{d}{=}0\) is a solution to
\eqref{eq:affine_sym}.

Uniqueness follows from \cite[Theorem 4.2]{FreePerp} (note that its proof does not use the assumption $\B\neq 0$). 
\end{proof}

We extend the notion of admissibility, introduced in Definition \ref{def:n_admissibility} for pairs of random variables, to sequences of pairs of random variables.

\begin{definition}\label{def_B_n}
A sequence $(A_n,B_n)_n$ is admissible if $(A_n,B_n)$ is $n$-admissible for each $n\in\N$ such that the numbers $\kappa:=\kappa_n\in\N$ and the sets $\mathcal{I}:=\mathcal{I}_n\subset\{1,\ldots,\kappa_n\}$ do not depend on $n$. 

\end{definition}

\begin{theorem}\label{main_thm}
Assume the sequence $(A_n,B_n)_n$ is admissible with
\begin{itemize}
\item[(i)] $p_{n}^{[i]}\in\pols_n(\R_{\geq 0})$ for all $i=1,\ldots,\kappa$, and $|\E[A_n^m]|<1$ for all $1\leq m\leq n$, 
\item[(ii)]   $\limsup_{n\to\infty}\E[A_n]<1$ and  $\limsup_{n\to\infty}\E[B_n]<\infty$,
\item[(iii)] the sequence $(\meas{p_{n}^{[i]}})_n$ converges weakly to some $\mu_i\in\MM(\R_{\geq 0})$, $i=1,\ldots,\kappa$. 
\end{itemize}
Then, for every \(n\in\N\), the finite free perpetuity \(p_{X_n}\) of order \(n\)
generated by \((A_n,B_n)\) exists, is unique, and has only nonnegative roots.

Moreover, the sequence $(\meas{p_{X_n}})_n$ converges weakly to some $\mu_{\X}\in\mathcal{M}(\R_{\ge0})$, which is uniquely determined by the free perpetuity fixed-point equation
\begin{align}\label{eq:free_perp_admis}
\mu_\X =  \mu_\kappa\boxast^{(\kappa)} \left( \mu_{\kappa-1}\boxast^{(\kappa-1)}\left(\cdots \mu_2 \boxast^{(2)}\left( \mu_1\boxast^{(1)} \mu_\X\right)\cdots \right)\right),
\end{align}
where $\boxast^{(i)} = \boxtimes$ if $i\in\mathcal{I}$ and $\boxast^{(i)} = \boxplus$ otherwise. 
\end{theorem}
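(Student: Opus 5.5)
The first assertion is immediate from earlier results. By (i), $|\E[A_n^m]|<1$ for all $m$, so Theorem~\ref{thm:finite_free_perp} gives existence and uniqueness of $p_{X_n}$. Theorem~\ref{thm:rel_roots} then gives $p_{X_n}\in\pols_n(\R_{\ge0})$, and Corollary~\ref{cor:n-admiss}(iii) shows that $p_{X_n}$ is the unique solution of the polynomial fixed-point equation \eqref{eq:poyperp}. The real work is the large-$n$ limit, which I would handle in three steps: tightness, identification of subsequential limits, and uniqueness.

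\emph{Tightness.} Since the roots are nonnegative, it suffices to bound the first moment $m_1(\meas{p_{X_n}})=\E[X_n]$ uniformly in $n$. Taking $k=1$ in \eqref{eq:finte_free_perp_moments} gives $\E[X_n]=\E[B_n]/(1-\E[A_n])$. By Corollary~\ref{cor:n-admiss}(i), both $\E[A_n]$ and $\E[B_n]$ are real and nonnegative, being products and sums of first normalized coefficients of polynomials in $\pols_n(\R_{\ge0})$. Condition (ii) then gives, for large $n$, $\E[A_n]\le c<1$ and $\E[B_n]\le C$. Hence $\sup_n \E[X_n]<\infty$, and Markov's inequality yields tightness of $(\meas{p_{X_n}})_n$ on $\R_{\ge0}$.

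\emph{Identification of limits.} Take a subsequence along which $\meas{p_{X_n}}\weak \nu$ with $\nu\in\MM(\R_{\ge0})$. Apply Corollary~\ref{cor:iterated_approximation} along this subsequence with $p_n^{[0]}:=p_{X_n}$; its proof only uses Theorems~\ref{thm:fujie} and \ref{prop:approx_boxtimesn}, which pass to subsequences. Combined with (iii) and \eqref{eq:poyperp}, this shows that the left-hand side $\meas{p_{X_n}}$ converges both to $\nu$ and to the right-hand side of \eqref{eq:free_perp_admis} with $\mu_\X$ replaced by $\nu$. So $\nu$ solves \eqref{eq:free_perp_admis}.

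\emph{Uniqueness (the main obstacle).} I would realize \eqref{eq:free_perp_admis} as a free perpetuity \eqref{eq:affine}. Choose free positive operators $\mathbb D_1,\dots,\mathbb D_\kappa$ with laws $\mu_i$, free from $\X$. By Remark~\ref{rem:free}, the right-hand side of \eqref{eq:free_perp_admis} is the law of an expression built by alternately conjugating, $\mathbb Y\mapsto \mathbb D_i^{1/2}\mathbb Y\mathbb D_i^{1/2}$ for $i\in\mathcal I$, and adding, $\mathbb Y\mapsto\mathbb Y+\mathbb D_i$ for $i\notin\mathcal I$. Expanding this expression writes it as $\A\X\A^*+\B$, where $\A$ is the ordered product of the $\mathbb D_i^{1/2}$ with $i\in\mathcal I$ and $\B\ge0$ is an element of the algebra generated by the $\mathbb D_i$. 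Thus $(\A,\B)$ is $*$-free from $\X$.

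To apply Proposition~\ref{prop:unique_free} we need two bounds. By traciality and freeness, $\tau(\A^*\A)=\prod_{i\in\mathcal I}m_1(\mu_i)$. I would check that this is at most $\liminf_n\prod_{i\in\mathcal I}\coef{1}{n}(p_n^{[i]})=\liminf_n\E[A_n]<1$. This requires lower semicontinuity of first moments under weak convergence on $\R_{\ge0}$, which needs the $m_1(\mu_i)$ to be finite. That finiteness holds for $i\in\mathcal I$ as long as no factor in the product degenerates to zero; the degenerate case, where some $\mu_i=\delta_0$, needs separate handling and makes $\A=0$ trivial. The same Fatou-type argument gives $\tau(\B)\le\liminf\E[B_n]<\infty$. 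Proposition~\ref{prop:unique_free} then forces every subsequential limit $\nu$ to be the same law $\mu_\X$, so the full sequence converges.

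I expect this last step to be the delicate point. It requires matching the nested convolution in \eqref{eq:free_perp_admis} to the form $\A\X\A^*+\B$, including the $*$-freeness of $\X$ from $(\A,\B)$ and the reduction to $|\A|$ as in Proposition~\ref{prop:unique_free}. It also requires transferring the moment bounds through weak limits, including the degenerate cases just mentioned.
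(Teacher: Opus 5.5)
Your proposal is correct and follows the paper's proof essentially step by step. Existence and nonnegative roots come from Theorem~\ref{thm:rel_roots}, tightness from Markov's inequality with $\E[X_n]=\E[B_n]/(1-\E[A_n])$, subsequential limits are identified via Corollary~\ref{cor:iterated_approximation}, and uniqueness comes from realizing the limit equation as a free perpetuity and applying Proposition~\ref{prop:unique_free}.

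The only difference is in the moment bounds. You split $\tau(\A^\ast\A)$ into the factors $\prod_{i\in\mathcal I}m_1(\mu_i)$. The paper instead uses Corollary~\ref{cor:iterated_approximation} to get $\meas{p_{A_n}}\weak\mu_\A:=\boxtimes_{i\in\mathcal I}\mu_i$ and $\meas{p_{B_n}}\weak\mu_\B$ (the nested convolutions applied to $\delta_0$), and then applies lower semicontinuity of $m_1$ once to each limit measure. This avoids the $0\cdot\infty$ case analysis for degenerate factors $\mu_i=\delta_0$ that you flagged.
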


\begin{proof}
We note that for all $n\in\N$ the pair $({A_n}, {B_n})$ is $n$-admissible and therefore, under (i), Theorem \ref{thm:rel_roots} implies that the finite free perpetuity $p_{X_n}$ of order $n$ generated by $(A_n,B_n)$, exists, is unique and has only real, non-negative roots. 

Denote $\nu_n = \meas{p_{X_n}}$.
First, we will show that the family $(\nu_n )_n$ is tight. Since these measures are supported in $\R_+$, it is enough to show that for any $\varepsilon>0$ there exists $K>0$ such that  
\[
\sup_{n\in\N}\nu_n\big((K, \infty)\big)<\varepsilon.
\]
Using Markov's inequality and then applying Corollary \ref{cor:E[U^m]}, we obtain that for all $t>0$,
    \begin{align}\label{mark_ineq1}
\nu_n \big((t, \infty)\big)\leq \frac{m_1(\nu_n)}{t} = \frac{\E[X_n]}{t}.
    \end{align}
   Since $X_n \stackrel{d}{=}_{n} A_n X_n+B_n$ with $X_n$ and $(A_n,B_n)$ affinely $n$-independent, we have in particular 
    \begin{align*}
\E[{X_n}]=\E[{A_n}]\E[{X_n}]+\E[{B_n}],
    \end{align*}
    which implies that (note that by (i) we have $|\E[A_n]|<1$ for each $n$)
    \begin{align*}
        \E[{X_n}]=\frac{\E[{B_n}]}{1-\E[{A_n}]}. 
    \end{align*}
    From \eqref{mark_ineq1} and assumption (ii) we conclude that there exists $C>0$ such that 
    \begin{align*}
     \nu_n \big((t, \infty)\big)\leq \frac{C}{t},
    \end{align*}
    which is less than $\varepsilon$ for sufficiently large $t$. 
  Thus, the family of measures $(\nu_n )_n$ is tight. 

 Fix a subsequence $(\nu_{n_{k}})_k$. By tightness, there exists a further subsequence and a probability measure $\nu$ such that $\nu_{n_{k_j}}\weak \nu$ as $j\to\infty$.

By the admissibility assumption, we have for each $n\in\N$, 
    \[
p_{X_n} =  p_{n}^{[\kappa]}\boxast_n^{(\kappa)} \left( p_{n}^{[\kappa-1]}\boxast_n^{(\kappa-1)}\left(\cdots p_n^{[2]}\boxast_n^{(2)}\left( p_{n}^{[1]}\boxast_n^{(1)} p_{X_n}\right)\cdots \right)\right).
    \]
    
Define the sequence of polynomials $(q_j)_j$ by
\[
q_j = p_{n_{k_j}}^{[\kappa]}\boxast_{n_{k_j}}^{(\kappa)} \left( p_{n_{k_j}}^{[\kappa-1]}\boxast_{n_{k_j}}^{(\kappa-1)}\left(\cdots p_{n_{k_j}}^{[2]}\boxast_{n_{k_j}}^{(2)}\left( p_{n_{k_j}}^{[1]}\boxast_{n_{k_j}}^{(1)} p_{X_{n_{k_j}}}\right)\cdots \right)\right).
\]
Under (iii), Corollary  \ref{cor:iterated_approximation} implies that as $j\to\infty$,
\[
\meas{q_j} \weak \mu_\kappa\boxast^{(\kappa)} \left( \mu_{\kappa-1}\boxast^{(\kappa-1)}\left(\cdots \mu_2 \boxast^{(2)}\left( \mu_1\boxast^{(1)} \nu\right)\cdots \right)\right).
\]
But, for each $j$, we have $q_j=p_{X_{n_{k_j}}}$, therefore we have also 
\[
\meas{q_j}\weak \nu.
\]
By uniqueness of the weak limit, we obtain 
\begin{align}\label{eq:nunu}
\nu =  \mu_\kappa\boxast^{(\kappa)} \left( \mu_{\kappa-1}\boxast^{(\kappa-1)}\left(\cdots \mu_2 \boxast^{(2)}\left( \mu_1\boxast^{(1)} \nu\right)\cdots \right)\right),
\end{align}

Let $\mathbb{D}_1, \dots, \mathbb{D}_\kappa \in \tilde{\mathcal{A}}$ be free, self-adjoint elements such that each $\mathbb{D}_i$ has distribution $\mu_i$. Then the right-hand side of \eqref{eq:nunu} is the distribution of the affine function of $\X\sim\nu$, free from $\mathbb{D}_{1},\dots,\mathbb{D}_{\kappa}$,
    \[
    f_\kappa\circ f_{\kappa-1}\circ\ldots f_1(\X),
    \]
    where 
    \[
    f_i(\Y) := \begin{cases}
      \mathbb{D}_i^{1/2} \Y \mathbb{D}_i^{1/2}  , & i\in \mathcal{I},\\
       \mathbb{D}_i+ \Y, & i\notin \mathcal{I}.
    \end{cases}
    \]
    Thus, $\nu$ is the distribution of $\X$ satisfying \eqref{eq:affine} for some $(\A,\B)$ depending on $(\mathbb{D}_i)_{i=1}^\kappa$. Notice that $\B$ is self-adjoint, by definition. Moreover, since $\mu_i$ are nonnegative, $\A$ and $\B$ are nonnegative.

By definition of admissibility and Corollary \ref{cor:iterated_approximation}, 
\[
\meas{p_{A_n}}=\meas{\mathop{\boxtimes_n}\limits_{i\in \mathcal I}p_n^{[i]}} \weak \mathop{\boxtimes}\limits_{i\in \mathcal I}\mu_i = :\mu_\A.
\]
Similarly,
\begin{equation*}
\begin{aligned}
\meas{p_{B_n}} 
&= \meas{ p_{n}^{[\kappa]}\boxast_n^{(\kappa)} \left( p_{n}^{[\kappa-1]}\boxast_n^{(\kappa-1)}\left(\cdots p_n^{[2]}\boxast_n^{(2)}\left( p_{n}^{[1]}\boxast_n^{(1)} e_{\boxplus_n}\right)\cdots \right)\right) } \\[1ex]
&\weak \mu_\kappa\boxast^{(\kappa)} \left( \mu_{\kappa-1}\boxast^{(\kappa-1)}\left(\cdots \mu_2 \boxast^{(2)}\left( \mu_1\boxast^{(1)} \delta_0\right)\cdots \right)\right)=:\mu_\B.
\end{aligned}
\end{equation*}

Since all measures are supported on $\R_{\geq 0}$, by the Portmanteau theorem and Corollary \ref{cor:E[U^m]}, we have
\begin{align*}
\tau(\A^\ast\A)= m_1(\mu_\A) \leq \liminf_{n\to\infty} m_1(\meas{p_{A_n}}) \leq \limsup_{n\to\infty} \E[A_n]<1, 
\intertext{and}
\tau(\B)= m_1(\mu_\B) \leq \liminf_{n\to\infty} m_1(\meas{p_{B_n}}) \leq \limsup_{n\to\infty} \E[B_n]<\infty. 
\end{align*}

By Proposition \ref{prop:unique_free}, the solution to \eqref{eq:nunu} is unique. Thus, the measure $\nu$ agrees on every convergent subsequence. Therefore $(\nu_n)_n$ converges weakly to $\mu_\X:=\nu$.
    
\end{proof}

Using Corollary \ref{cor:n-admiss} (i) and (iii), we restate Theorem \ref{main_thm} purely in terms of polynomials. Recall that $G_{\mathcal{I},\kappa}$ is defined before Definition \ref{def:n_admissibility}. 
\begin{theorem}[Polynomial reformulation of Theorem~\ref{main_thm}]\label{corr:reform}
    Fix $\kappa\in\N$ and $\mathcal{I}\subset\{1,\dots,\kappa\}$. Let $(p_n^{[j]})_{n \in \N, 1 \le j \le \kappa}$ be a family of polynomials such that $p_n^{[j]} \in \mathcal{P}_n(\R_{\ge 0})$ for all $1\le j\le\kappa$. Assume that 
    \begin{itemize}
        \item[(i)] $\prod_{i\in\mathcal{I}}|\coef{m}{n}(p_n^{[i]})|<1$ for all $1\le m\le n$,
        \item[(ii)]$\limsup_{n\to\infty}\prod_{i\in\mathcal{I}}\coef{1}{n}(p_n^{[i]})<1$ and $\limsup_{n\to\infty}G_{\mathcal{I},\kappa}(0, \coef{1}{n}(p_n^{[1]}),\dots,\coef{1}{n}(p_n^{[\kappa]}))<\infty$,
        \item[(iii)] the sequence $(\meas{p_{n}^{[i]}})_n$ converges weakly to some $\mu_i\in\MM(\R_{\geq 0})$, $i=1,\ldots,\kappa$. 
    \end{itemize}
    Then, for all $n\in\N$, there exists a unique $p_{X_n}\in\pols_n(\C)$ satisfying \eqref{eq:poyperp} with $\kappa_n=\kappa$ and $p_{X_n}$ has only real, non-negative roots. 
    
    Moreover, the sequence $(\meas{p_{X_n}})_n$ converges weakly to some $\mu_{\X}\in\mathcal{M}(\R_{\ge0})$, which is uniquely determined by the free perpetuity fixed-point equation \eqref{eq:free_perp_admis}.
\end{theorem}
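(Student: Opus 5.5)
The plan is to reduce the statement to Theorem~\ref{main_thm} by building, for each $n$, an $n$-admissible pair $(A_n,B_n)$ whose building blocks are $U$-transforms of the given polynomials. For each $n$ and each $j\in\{1,\dots,\kappa\}$, Remark~\ref{rem:MK} provides a $U$-transform $D_j^{(n)}$ of $p_n^{[j]}$. We can take it uniformly distributed on a finite multiset, so $\E[|D_j^{(n)}|^n]<\infty$. Since the probability space supports any finite discrete distribution, we may choose $D_1^{(n)},\dots,D_\kappa^{(n)}$ independent. We then set
\[
A_n:=\prod_{i\in\mathcal I}D_i^{(n)},\qquad B_n:=G_{\mathcal I,\kappa}(0,D_1^{(n)},\dots,D_\kappa^{(n)}).
\]
By Definition~\ref{def:n_admissibility} the pair $(A_n,B_n)$ is $n$-admissible with $\kappa_n=\kappa$ and $\mathcal I_n=\mathcal I$. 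Hence the sequence $(A_n,B_n)_n$ is admissible in the sense of Definition~\ref{def_B_n}, and its associated polynomials are exactly $p_n^{[i]}(z)=\E[(z-D_i^{(n)})^n]$.

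Next we translate the hypotheses using Corollary~\ref{cor:n-admiss}(i). It gives
\[
\E[A_n^m]=\prod_{i\in\mathcal I}\coef{m}{n}(p_n^{[i]})\quad\text{and}\quad \E[B_n]=G_{\mathcal I,\kappa}\bigl(0,\coef{1}{n}(p_n^{[1]}),\dots,\coef{1}{n}(p_n^{[\kappa]})\bigr).
\]
Hypothesis (i) of the present theorem therefore gives $|\E[A_n^m]|<1$ for $1\le m\le n$. Together with $p_n^{[i]}\in\pols_n(\R_{\ge0})$, this is hypothesis (i) of Theorem~\ref{main_thm}. All coefficients $\coef{k}{n}(p_n^{[i]})$ are real and nonnegative, so $\E[A_n]$ and $\E[B_n]$ are real, and hypothesis (ii) translates verbatim into (ii) of Theorem~\ref{main_thm}. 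Hypothesis (iii) is identical in both statements.

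It remains to identify the objects. By Corollary~\ref{cor:n-admiss}(iii), the finite free perpetuity $p_{X_n}$ generated by $(A_n,B_n)$ exists and is the unique monic degree-$n$ polynomial satisfying \eqref{eq:poyperp}. This uniqueness among all of $\pols_n(\C)$ comes from Theorem~\ref{thm:finite_free_perp}: via Corollary~\ref{cor:n-admiss}(ii), any $p\in\pols_n(\C)$ solving \eqref{eq:poyperp} has a $U$-transform that is affinely $n$-independent of $(A_n,B_n)$ and solves $X\stackrel{d}{=}_nA_nX+B_n$, and the moments of such a solution are forced by \eqref{eq:finte_free_perp_moments}. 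Applying Theorem~\ref{main_thm} then gives nonnegative real-rootedness and the weak convergence $\meas{p_{X_n}}\weak\mu_\X$, where $\mu_\X$ is uniquely determined by \eqref{eq:free_perp_admis}.

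The main point needing care is this last identification. Given an arbitrary $p\in\pols_n(\C)$ solving \eqref{eq:poyperp}, we need a $U$-transform $Y$ of $p$ that is affinely $n$-independent of $(A_n,B_n)$. We obtain it by taking a finitely supported $U$-transform from Remark~\ref{rem:MK}, chosen fully independent of $(D_1^{(n)},\dots,D_\kappa^{(n)})$; full independence implies affine $n$-independence. With that $Y$, equation \eqref{eq:AYB} turns \eqref{eq:poyperp} into the moment identity $Y\stackrel{d}{=}_nA_nY+B_n$. Beyond this step, everything is bookkeeping through Corollary~\ref{cor:n-admiss}.
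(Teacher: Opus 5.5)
Your proposal is correct and follows the paper's route. The paper obtains this theorem directly from Theorem~\ref{main_thm}: it realizes the $p_n^{[i]}$ as independent $U$-transforms, which makes $(A_n,B_n)$ admissible with fixed $\kappa$ and $\mathcal I$, and then translates the hypotheses and the fixed-point equation via Corollary~\ref{cor:n-admiss}(i) and (iii). Your extra argument for uniqueness within all of $\pols_n(\C)$, using an independent finitely supported $U$-transform and \eqref{eq:AYB}, simply makes explicit what Corollary~\ref{cor:n-admiss}(iii) uses implicitly.
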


\begin{remark}
    Fix $\kappa\in\N$ and $\mathcal{I}\subset\{1,\dots,\kappa\}$. Let $(p_n^{[j]})_{n \in \N, 1 \le j \le \kappa}$ be a family of polynomials such that $p_n^{[j]} \in \mathcal{P}_n(\R_{\ge 0})$ for all $1\le j\le\kappa$. Assume $p_n^{[i]}$ is $d$-monic with $d\in\N$ for all $i\in\mathcal{I}$ and sufficiently large $n$. Then, assumption (i) in Corollary \ref{corr:reform} is clearly not satisfied. However, the theorem may still be applied to 
    \[
q_{n-d}^{[j]} := 
\begin{cases} 
    \partial_n^dp_n^{[j]}, & \text{if } j\notin\mathcal{I}, \\
    D_n^dp_n^{[j]},   & \text{if } j\in\mathcal{I}.
\end{cases}
\]
By Proposition \ref{prop:derivatives_preserve_nonnegative_roots},  $q_{n-d}^{[j]}\in\pols_{n-d}(\R_{\ge 0})$ for all $j\in\{1,\dots,\kappa\}$. By Lemma \ref{lemma:normalized_derivatives_coefficients}, we also have the following relation of coefficients of $q_{n-d}^{[j]}$ to coefficients of $p_{n}^{[j]}$: for $m\in\{0,\dots,n-d\}$
\[
\coef{m}{n-d}(q_{n-d}^{[j]}) = 
\begin{cases} 
    \coef{m}{n}(p_{n}^{[j]}), & \text{if } j\notin\mathcal{I}, \\
    \coef{m+d}{n}(p_{n}^{[j]}),   & \text{if } j\in\mathcal{I}.
\end{cases}
\]
We note that $p_{X_n}\in\pols_{n-d}(\C)$ is a solution to \eqref{eq:poyperp} with $\kappa_n=\kappa$ if and only if $p_{X_n}$ satisfies
\[
p_{X_n} =  q_{n-d}^{[\kappa]}\boxast_{n-d}^{(\kappa)} \left( q_{n-d}^{[\kappa-1]}\boxast_{n-d}^{(\kappa-1)}\left(\cdots q_{n-d}^{[2]}\boxast_{n-d}^{(2)}\left( q_{n-d}^{[1]}\boxast_{n-d}^{(1)} p_{X_n}\right)\cdots \right)\right).
\]
\end{remark}

\appendix

\section{Complex-valued perpetuities}\label{app}

Assume that $A,B,X$ are complex-valued random variables and consider the affine fixed-point equation of the form
\begin{equation}\label{eq:classical-perpetuity_app}
X \stackrel{d}{=} AX+B,\qquad X\mbox{ and }(A,B)\mbox{ are independent}.
\end{equation}
A random variable $X$ satisfying
\eqref{eq:classical-perpetuity_app} is called a complex-valued perpetuity.

In the literature, perpetuities are usually studied for real $A,B,X$; see \cite{BurBook16}. The case of complex-valued random variables $A,B,X$ can be rewritten equivalently as a real-vector-valued perpetuity, namely
\begin{align}\label{eq:vecperp}
v(X) \stackrel{d}{=} m(A) v(X)+v(B),\qquad v(X)\mbox{ and }(m(A), v(B))\mbox{ are independent},
\end{align}
where 
\[
v(x+i y) = \begin{bmatrix} x \\ y \end{bmatrix}\quad\mbox{and}\quad m(x+i y) = 
 \begin{bmatrix}
x & -y \\ y & x
 \end{bmatrix},\quad x,y\in\R.
 \]
We equip $\R^2$ with the Euclidean norm and use the corresponding induced operator norm for real \(2\times2\) matrices (denoted by $\|\cdot\|$). Then, for every $z=x+iy\in\C$,
\[
|v(z)|=|z|
\qquad\mbox{and}\qquad
\|m(z)\|=|z|.
\]
 
Let $(A_n,B_n)_{n\ge1}$ be iid copies of $(A,B)$ and set
\[
\Pi_0:=I_2,
\qquad
\Pi_n:=m(A_1)\cdots m(A_n),\quad n\ge1.
\]
The canonical candidate for a solution is the random series
\begin{equation}\label{eq:classical-series}
v(X) \stackrel{d}{=} \sum_{n=1}^\infty \Pi_{n-1}v(B_n).
\end{equation}
A standard sufficient condition for almost sure convergence of \eqref{eq:classical-series}, and for existence and uniqueness in law of a solution to \eqref{eq:vecperp}, is  (see e.g. \cite[Theorem 4.1.4]{BurBook16})
\begin{equation}\label{eq:classical-existence}
\E[\log^+ \| m(A)\|]<\infty,
\qquad
\E[\log^+ |v(B)|]<\infty,\qquad \gamma<0,
\end{equation}
where $\gamma$ is the top Lyapunov exponent 
\[
\gamma = \inf_{n\in\N} \frac1n\E[\log \|\Pi_n\|]
\]
and $\log^+ x:=\max\{\log x,0\}$.  

Noting that $m$ is a real $2\times 2$ matrix representation of complex multiplication, we have
\[
\Pi_n = m(A_1 \cdots A_n).
\]
Thus,  $\|\Pi_n\| = |A_1| \cdots|A_n|$ and therefore 
\[
\gamma=\E[\log |A|].
\]
Clearly, if $\E[\log|A|]<0$, then $\E[\log^+ \| m(A)\|] = \E[\log^+|A|]<\infty$. 

Thus, the conditions  
\[
\E[\log |A|]<0
\qquad\mbox{and}\qquad
\E[\log^+|B|]<\infty
\]
imply almost sure convergence of the series 
\[
\sum_{n=1}^\infty A_1\cdots A_{n-1}B_n.
\]
The law of this series is the unique solution in law to 
\eqref{eq:classical-perpetuity_app}.

\section*{Acknowledgements}
The authors would like to thank Jonas Jalowy for helpful comments that led to a generalization of Theorem~\ref{main_thm}.

\bibliographystyle{plain}
\bibliography{Bibl}

\end{document}